\numberwithin{equation}{section}
\newtheorem{theorem}{Theorem}[section]
\newtheorem{lemma}[theorem]{Lemma}
\newtheorem{corollary}[theorem]{Corollary}
\newtheorem{definition}[theorem]{Definition}
\newtheorem{proposition}[theorem]{Proposition}
\newtheorem{assumption}[theorem]{Assumption}
\newtheorem{conjecture}[theorem]{Conjecture}
\theoremstyle{remark}
\newtheorem{remark}[theorem]{Remark}
\newcommand{\beginsec}{
	\setcounter{equation}{0}
}
\DeclareMathOperator*{\tildeinf}{\widetilde{\inf}}
\newcommand{\la}{\lambda}
\newcommand{\eps}{\varepsilon}
\newcommand{\ph}{\varphi}
\newcommand{\al}{\alpha}
\newcommand{\s}{\sigma}
\newcommand{\sig}{\sigma}
\newcommand{\del}{\delta}
\newcommand{\Gam}{\mathnormal{\Gamma}}
\newcommand{\Del}{\mathnormal{\Delta}}
\newcommand{\X}{\mathnormal{\Xi}}
\newcommand{\Sig}{\mathnormal{\Sigma}}
\newcommand{\Ups}{\mathnormal{\Upsilon}}
\newcommand{\Ph}{\mathnormal{\Phi}}
\newcommand{\Om}{\mathnormal{\Omega}}
\newcommand{\N}{{\mathbb N}}
\newcommand{\Q}{{\mathbb Q}}
\newcommand{\R}{{\mathbb R}}
\newcommand{\Z}{{\mathbb Z}}
\newcommand{\BH}{{\mathbb H}}
\newcommand{\E}{{\mathbb E}}
\newcommand{\PP}{{\mathbb P}}
\newcommand{\calA}{{\cal A}}
\newcommand{\calB}{{\cal B}}
\newcommand{\calE}{{\cal E}}
\newcommand{\calF}{{\cal F}}
\newcommand{\calG}{{\cal G}}
\newcommand{\calH}{{\cal H}}
\newcommand{\calI}{{\cal I}}
\newcommand{\calJ}{{\cal J}}
\newcommand{\calK}{{\cal K}}
\newcommand{\calM}{{\cal M}}
\newcommand{\calN}{{\cal N}}
\newcommand{\calS}{{\cal S}}
\newcommand{\calT}{{\cal T}}
\newcommand{\calU}{{\cal U}}
\newcommand{\frS}{\mathfrak{S}}
\newcommand{\frX}{\mathfrak{X}}
\newcommand{\lan}{\langle}
\newcommand{\ran}{\rangle}
\newcommand{\w}{\wedge}
\newcommand{\To}{\Rightarrow}
\newcommand{\iy}{\infty}
\newcommand{\be}{\begin{equation}}
\newcommand{\ee}{\end{equation}}
\newcommand{\noi}{\noindent}
\newcommand{\ds}{\displaystyle}
\newcommand{\one}{\mathds{1}}
\newcommand{\slp}{\calS_{\text{\rm LP}}}
\newcommand{\LE}{\preccurlyeq}
\newcommand{\les}{\leqslant}
\newcommand{\ges}{\geqslant}
\newcommand{\indep}{\perp\hspace{-0.6em}\perp}
\newcommand{\sgn}{{\rm sgn}}
\newcommand{\skp}{\vspace{1em}}
\begin{document}

\begin{frontmatter}
\title{Parallel server systems under an extended heavy traffic condition: A lower bound
}
\runtitle{PSS under an EHTC: a lower bound}

\begin{aug}
\author[A]{\fnms{Rami} \snm{Atar}\ead[label=e1, ,mark]{rami@technion.ac.il}},
\author[A]{\fnms{Eyal} \snm{Castiel}\ead[label=e2,mark]{eyal.ca@campus.technion.ac.il}}
\and
\author[B]{\fnms{Martin I.} \snm{Reiman}\ead[label=e3,mark]{martyreiman@gmail.com}}
\address[A]{Viterbi Faculty of Electrical and Computer Engineering, Technion, Haifa, Israel, \printead{e1,e2}}

\address[B]{Department of Industrial Engineering and Operations Research, Columbia University, New York City, NY, \printead{e3}}
\end{aug}

\begin{abstract}
The standard setting for studying parallel server systems (PSS) at the
diffusion scale is based on the heavy traffic condition (HTC), which
assumes that the underlying static allocation linear program (LP) is critical and has a unique solution. This solution determines the graph of basic activities,
which identifies the set of activities (i.e., class--server pairs) that are operational.
In this paper we explore the extended HTC, where the LP is merely assumed to be critical.
Because multiple solutions are allowed, multiple
sets of operational activities, referred to as modes, are available.
Formally, the scaling limit for the control problem
associated with the model is given by a
so-called workload control problem (WCP)
in which a cost associated
with a diffusion process is to be minimized by dynamically
switching between these modes.
Our main result is that the WCP's value constitutes an
asymptotic lower bound on the cost associated with the PSS model.
\end{abstract}

\begin{keyword}[class=MSC]
	\kwd[Primary: 60K25; 90B22; 68M20; 49K45]{}
	\kwd[; Secondary:  60F17; 93E20; 49K15]{}
\end{keyword}

\begin{keyword}
	\kwd{parallel server systems}
	\kwd{heavy traffic}
	\kwd{diffusion limits}
	\kwd{Brownian control problem}
	\kwd{Hamilton-Jacobi-Bellman equation}
	\kwd{strict complementary slackness}
\end{keyword}

\end{frontmatter}

\setcounter{tocdepth}{1}
\tableofcontents


\section{Introduction}\label{sec1}

\subsection{Background and motivation}

Parallel server systems (PSSs) constitute a class of queueing control problems that have received a lot of attention since their introduction
in  \cite{harlop}.
The overwhelming majority of this effort has been on heavy traffic limits and associated asymptotic optimality (AO).
Almost all of this work has been carried out under the assumption that a particular linear program
(LP), called the `static allocation problem'
in  \cite{harlop} has a unique solution. 
(This LP was called the static planning problem in \cite{har2000}, which deals with a broader set of problems.)
In particular, the notion of heavy traffic for PSS  defined in \cite{harlop} requires that the static allocation problem has a unique solution. 
But it is not difficult to construct a PSS that is intuitively in `heavy traffic' and yet,
because its associated static allocation problem does not have a unique solution,
does not satisfy the the heavy traffic definition of \cite{harlop}. 
Thus what is termed heavy traffic in \cite{harlop} is really, in a sense, \emph{restricted} heavy traffic.
In this paper we remove the requirement that the solution of the static planning problem is unique.
Because the term heavy traffic in the context of PSS has already been defined in \cite{harlop} , we use the term extended heavy traffic.

A few further comments on the uniqueness assumption are in order.
First, this assumption carries with it the benefit of substantial simplification of the problem, described further below.
As a consequence, as also described further below, there has been great progress on this problem in \cite{bw1},\cite{bw2}, \cite{harlop}, 
and \cite{man-sto} under the uniqueness assumption.
On the other hand, there is no strong reason
to believe that the uniqueness assumption holds in typical problems of practical interest.
For example,  it fails to hold under natural properties arising in the design of service systems, such as the decomposability condition described at the end of this
subsection. Thus it seems reasonable to suspect that it often does not hold.
While the great progress just cited justifies the insightful restriction of uniqueness as a temporary expediency, 
we feel that it is time to move beyond it.
The purpose of this paper is to begin the exploration of what happens if the uniqueness
assumption does not hold. This paper provides an asymptotic lower bound for a general PSS. 
A companion paper  \cite{ourupperbound} provides asymptotic optimality results for the simplest non-trivial PSS where uniqueness fails to hold.

The specific system that we consider, described in more detail in \S 2, has $I$ classes of customers and $K$ servers. A class-server pair $j=(i,k)$ is said to be an activity
if server $k$ is capable of serving class $i$. The number of activities is denoted by $J$.
The service rate $\mu_{j}$ depends on the activity, namely the class-server combination.
As is the case in all PSS models, customers arrive at a class based queue
where they await service, receive a single service, and leave the system. 
The controls available here are both routing (which server serves each customer) and sequencing (in what order  they are served). Our objective is to minimize the expected infinite horizon total discounted cost, where the cost rate is a linear function of the queue length.

Despite the simple specification of this control problem, determining an optimal control is typically very challenging.
This motivated  Harrison and Lopez \cite{harlop} to look for control policies that are AO in heavy traffic. When the control problem involves only sequencing - the order in which customers are served, the traffic intensity can be 
obtained immediately from the arrival and service rates.
On the other hand, when the control problem involves routing, as it does in PSSs, the traffic intensity may depend on the control policy that is used.
As a consequence, it is not immediately obvious how to define heavy traffic.
This is the context in which Harrison  \cite{har2000} introduced the static planning problem,
noting the connection to earlier work of \cite{laws92}, which introduced a multicommodity network flow problem to define heavy traffic for dynamic routing problems.

Harrison \cite{har1988} pioneered the notion of using Brownian Control Problems (BCPs) to obtain
AO control policies for  queueing network control problems.
As described in \cite{harlop}, the solution of such a problem can be broken into 4 steps: 
(1) Derive a BCP that plausibly approximates the original control problem; 
(2) Solve the BCP; (3) Translate the solution of the BCP into a control policy for the original problem; and (4) Prove that this policy is AO.

In general, the BCP that arises from 
queueing control problems, including 
PSSs, is multidimensional, and consequently difficult to solve.
It was shown in \cite{Har-Van} how state space collapse can lead to an `equivalent workload formulation' of lower dimension.
In the context of PSSs,
a dramatically simplifying condition  known as `complete resource pooling' (CRP), under which, in heavy traffic,
the servers can effectively be `pooled' together into a single `super server', was introduced in \cite{harlop}.
They showed that the CRP condition yields a BCP whose equivalent workload formulation  is one dimensional.  

A key assumption in both    \cite{har2000}, \cite{harlop} 
is that the static planning LP has a unique solution. This assumption, combined with CRP, gives rise to a  substantial simplification: 
It was shown in  \cite{harlop}  that
the equivalent workload formulation of the BCP has a simple pathwise solution, which minimizes the instantaneous cost rate at every point in time with probability one, completing steps (1) and (2) under the uniqueness assumption.  
Almost all subsequent work that we are aware of has involved this assumption. The exceptions (that we are aware of) are described below. 
It was understood and acknowledged that relaxing the uniqueness assumption would lead to additional difficulties. For example,
in  \cite{harlop}  Harrison and Lopez note that  `multiple optima lead to Brownian approximations of a  more complicated form than we are prepared to treat'.

Steps (3) and (4) above were first completed in the context of a simple special case known as an `N' network.
This has 2 customer classes, 2 servers, and 3 activities. (Server 1 can only serve class 1.)
The `N' structure assures that the static allocation LP can have at most 1 solution.
An `N' network with specific distributional assumptions and parameters was considered in \cite{har98}, where a discrete review 
policy,  obtained via the `BIGSTEP' method, was introduced and shown to be AO.
This same N network (but with  more general distributions and parameter values) was considered by Bell and Williams in \cite{bw1}. The parameter values are chosen to yield heavy traffic.  Under the objective of minimizing the expected infinite horizon total discounted cost they prove AO of a  threshold policy. 

Bell and Williams \cite{bw2} treat the general PSS under the assumptions that the static allocation LP has a unique solution, and that the CRP condition holds. Under those conditions AO of an intricately designed priority-threshold policy is proved.

PSSs were considered by Mandelbaum and Stolyar \cite{man-sto} under a cost function that is increasing and strictly convex. In this context they showed that a simple generalized $c\mu$-rule is AO. They also effectively assumed uniqueness of the solution to the static allocation LP, along with CRP. They acknowledge difficulty if uniqueness does not hold, and mention that, when uniqueness fails to hold due to having groups of identical servers, by  viewing the groups as individual servers their results continue to hold. 

Stolyar  \cite{sto05}  introduces a condition that he calls First-Order-CRP (FO-CRP), which relaxes the requirement that the static allocation LP has a unique solution. However, the main heavy traffic AO results are proved under the more 
restrictive CRP condition, which requires uniqueness.

Of course, PSSs that do not satisfy CRP, as well as more complex stochastic networks remain of great interest.
Some progress on PSSs that do not satisfy CRP was obtained by Pesic and Williams  \cite{pes16}, who
prove some structural results along with an optimal control for the BCP under certain conditions.
It is conjectured there that a particular continuous review threshold policy is AO
under those conditions.

In the context  of more complex stochastic networks,  
Budhiraja and Ghosh \cite{BG2} show that the solution of the BCP provides an asymptotic lower bound for a class of controlled queueing networks, and in \cite{BG2012} they show that the value function converges to this lower bound.
It is worth noting that \cite{pes16}, \cite{BG2} and \cite{BG2012}  all assume that the static planning LP has a unique solution.

As mentioned above, there is no strong reason to believe that the static allocation LP will satisfy the uniqueness assumption  in typical problems of practical interest.
In fact, there is good reason to believe that it will often not hold. An interesting and natural case is where the service rates satisfy a decomposition property.
Let $\mu_{ik}$ denote the rate at which class $i$ jobs are processed
by server $k$, and assume that every class-server pair is an activity.
As we show, assuming that these rates (to first order) are decomposable
as $\mu_{ik}=\al_i\beta_k$ always gives rise to non-uniqueness.
In the case $I=K=2$ we show that such a decomposition is
also necessary for non-uniqueness.
Service rates decompose this way when
the mean size of a job is characteristic to the class (and then
$\al_i$ is the reciprocal mean), and each server has its own processing speed
(here given by $\beta_k$).
Because all earlier results on the PSS in heavy traffic were obtained
under the HTC, this class of service rates has been excluded.
This decomposition does not have to hold exactly, just to first order, allowing the second order data, which determine drift and variance, to be more general.
In addition, this decomposition need not hold for all classes and servers. Non-uniqueness can arise when there are subsets of classes and/or servers of size at least two for which the decomposition holds,  even if not all class-server pairs form activities.
Examples (D) and  (E) in  \S \ref{sec:ex} provides   examples of this.

\subsection{Results}

A brief informal
description of the role played by the uniqueness assumption
in earlier work will help explain the main issues that must be dealt with
in its absence.
Whereas the aforementioned BCP describes the diffusion scale
deviations of all relevant stochastic processes
(such as queue lengths, cumulative busyness time on
each activity, etc.)\ about a central model,
the static allocation problem is concerned with the fluid scale behavior
and determines the central model itself.
Specifically, this LP describes the mean fraction of time
devoted by each server to the different classes.
When the LP has a unique solution,
any sequence of policies along which the diffusion-scaled cost
remains bounded as the scaling parameter grows
must at the very least `adhere' to this solution at the fluid scale.
That is, such a sequence of policies should have the property that
the diffusion scale deviations of the relevant processes about the central model
are at most of order 1, so they can relate to the processes comprising the BCP.
As a consequence, such a sequence
must have the much weaker, fluid scale property,
that the fractions of cumulative busyness for each activity
converge to this unique solution at the scaling limit.
The BCP is set up in a way that reflects behavior
according to this unique solution.

The activities to which the LP solution
allocates a strictly positive (respectively, zero) fraction of effort are said to be basic
(respectively, non-basic).
In the work \cite{bw2} (which, like this paper, considers linear holding costs)
the non-basic activities are not used by the prescribed policies
even for diffusion-scale corrections.
In this sense, the solution to the LP determines not only the fluid limit
of the model but also which of the activities are operational.

When the LP has multiple solutions, many of the aspects
described above break down.
A BCP can still be written down, describing
deviations about a fluid-scale allocation in the space
of solutions, but the fluid-scale allocation now becomes a stochastic process in
the space of solutions that is  controlled by the decision maker.
Thus the model has a non-deterministic fluid limit.
The set of basic, hence operational, activities has to be selected dynamically.
The introduction of this dynamics influences all stages of the analysis:
The form of the BCP and the method of solving it,
techniques for proving weak convergence required for establishing the lower bound
(and, in the paper \cite{ourupperbound}, the design of policies and the proof of their AO).

The class of PSS treated in this paper is quite general, except
for the restriction to systems where the workload process is one-dimensional.
In that we have roughly followed \cite{harlop} and \cite{bw2}, which restrict their attention to systems that satisfy CRP.
As mentioned previously, under CRP the servers can be pooled into a single super server, yielding a one-dimensional workload process.
It was shown in Proposition 4 of \cite{harlop} that under CRP (which assumed uniqueness of the LP solution) the equivalent workload formulation is one-dimensional.
We do not prove an analog of Proposition 4 of \cite{harlop} under non-uniqueness. On the other hand, we show that
the condition for reduction to a one-dimensional problem associated with
the workload process is the same as with a unique solution, namely that the dual of the static allocation LP has a unique solution. 
Because the definition of CRP has been tightly tied to uniqueness we do not use the term in relation to our results.

In what follows we summarize our results.

{\it Workload control problem (WCP) and Hamilton-Jacobi-Bellman (HJB) equation.}
Under multiplicity,
the set of LP solutions has $2\le M<\infty$ extreme points that we call control modes,
or simply modes.
Each mode corresponds to a different allocation and a different
set of operational activities. 
As mentioned above, one of the dynamically controlled BCP ingredients
is the fluid-scale limit of the allocation process. This process is in general random. 

We derive the BCP and then introduce the WCP,
which is the equivalent workload formulation of the BCP in our case.
The WCP is formulated in terms of a one-dimensional
stochastic differential equation (SDE)
with controlled drift and diffusion coefficients, and reflection at zero.
It is dramatically different 
from the WCP obtained under the uniqueness assumption,
in which the coefficients are not controlled (and are constant), and
the solution is a
reflected Brownian motion (BM).
It is a standard fact that the solution to a control problem of this kind
can be characterized in terms of viscosity solutions of an HJB equation.
The general theory allows one to express an optimal (or nearly optimal)
control as a feedback from the diffusion process (where the feedback function
can be specified in terms of the solution to the HJB equation).
In our case we can prove, based on PDE theory on fully nonlinear uniformly elliptic equations,
that the HJB equation has a unique
classical solution. Moreover, the solution to the WCP is given by
a diffusion with discontinuous drift and diffusion coefficients.
Examples of queueing models and queueing control models in heavy traffic which lead to
a diffusion process with a discontinuous diffusion coefficient have appeared in the literature
\cite{kry02}, \cite{rath}. To the best of our knowledge this is the first time this occurs
in the context of optimal/asymptotically optimal control of a queueing system.

{\it Asymptotic lower bound.}
Our main result states that the solution of the WCP provides an asymptotic lower bound on achievable cost.
The lack of a pathwise solution to the WCP, and the related representation of the limit process introduces technical difficulties.
To prove the result one must show that scaling limits
of the queueing model processes form an admissible control system
for the WCP. The specific tuple that forms such a control system
includes a filtration and a standard BM (SBM) on this filtration,
which serves as the driving BM in the aforementioned SDE.
The construction of this BM can be achieved via a transformation of certain
processes associated with the scaling limits of
the arrival processes and random time changes of
potential service processes of the $J$ activities, once it is established that these
processes are all martingales on a common filtration.
The random time changes appear
for exactly the reason that the modes vary stochastically.
Whereas in the single mode case the fluid limits of cumulative busyness processes
are deterministic and the time changed processes are easily shown to be martingales,
this conclusion is not immediate in the multiple mode case.
Our approach to this difficulty is based on multi-parameter filtrations
and a result due to Kurtz \cite{kurtz80} on multi-parameter time change.
One of the main steps toward using this result is to show
that the prelimit cumulative busyness processes define multi-parameter
stopping times on a filtration generated by the model's primitives.

Our results do not fit neatly into  the $4$ steps of \cite{harlop}. 
This paper covers step (1), but also parts of steps (2) and (4).
It partly covers step (2) by characterizing the solution of the WCP in terms of an HJB equation. It also covers part of step (4) by providing a target that, if achieved, yields AO.
The remainders of steps (2) and (4), along with step (3), are carried out in \cite{ourupperbound} for the simplest PSS, consisting of 2 classes, 2 servers
and 4 activities ($I=K=2$, $J=4$).

\subsection{Organization of the paper}
In \S \ref{sec20}, we present the notation and model in more detail.
We define the extended HTC in \S\ref{sec21}, and state
lemmas regarding the static allocation LP and its dual.
We introduce the WCP and state our main result in \S \ref{sec:wcpp}.
A discussion on degeneracy of the LP solution appears in \S
\ref{sec:degen}.
Several examples of PSS with multiple LP solutions are provided in \S \ref{sec:ex}.
In \S \ref{sec3}, we prove the lemmas related to the LP.
\S \ref{sec4} treats the BCP, WCP and HJB equation.
In \S \ref{sec41}, the BCP and WCP are derived.
\S \ref{sec42} provides a candidate solution to the WCP based
on the HJB equation. \S \ref{sec43} proves that a classical
solution to the HJB equation uniquely exists and establishes
the optimality of the WCP's candidate solution.
It also addresses a special case where the WCP's solution is to always use one
particular mode.
Finally, we prove the main result in \S \ref{sec5}.
The main lemmas on which this proof is based
appear in \S \ref{sec51},
and are proved in \S \ref{sec52}.
For ease of reference several results related to linear programming that we use are collected together in the Appendix.

\subsection{Notation and terminology}

The following notation will be used throughout.
$\N$, $\R$ and $\R_+$ are the sets of natural, real and, respectively,
non-negative real numbers. The symbol $\mathbf{i}$ denotes $\sqrt{-1}$.
For $a, b \in \R$, $a \vee b$ denotes the maximum of $a$ and $b$,
$a \wedge b$  the minimum of $a$ and $b$,  $a^+=a \vee 0$, and $a^-=-(a \wedge 0)$.
For any $P\in \N$, $0_P$ (resp. $1_P$) denote column vectors in $\R^P$ whose all components are 0 (resp. 1).
For a set $A$, $\one_A$ denotes the indicator function of $A$.
For $a,b\in\R^P$, write $a\les b$ (equivalently $b\ges a$) for the componentwise
inequalities. For a set $A\subset\R^P$, $\text{\rm ch}(A)$ denotes the closed
convex hull of $A$.
For real-valued functions and processes, the notation $X(t)$ is
used interchangeably with $X_t$.
Given a Polish space $E$, denote by $C_E[0,\iy)$ and $D_E[0,\infty)$
the spaces of $E$-valued, continuous and, respectively, c\`{a}dl\`{a}g functions on $[0,\infty)$.
Equip the former with the topology of convergence u.o.c.\ (uniformly over compacts) and the latter with the $J_1$ topology. For an element of $C_E[0,\iy)$ (or $D_E[0,\infty)$), we use the notation $\|f\|_t=\sup_{s\in [0,t]} \lvert f_s\rvert$.
Denote by $C^+_\R[0,\iy)$ (respectively, $D^+_\R[0,\infty)$) the subset of $C_\R[0,\iy)$
(respectively, $D_\R[0,\iy)$) of non-negative, non-decreasing functions,
and by $C^{0,+}_\R[0,\iy)$the set of functions in $C^+_\R[0,\iy)$
that are null at zero.
Write $X_n\To X$ for convergence in law.
A tight sequence of processes with sample paths in $D_E[0,\iy)$ is said to be $C$-tight
if it is tight and the limit of every
weakly convergent subsequence has sample paths in $C_E[0,\iy)$ a.s.
The letter $c$ denotes a positive deterministic
constant whose value may change from one
appearance to another.

In addition to abbreviations already introduced, we will use
RV for random variable(s); IID for independent identically distributed; LHS (resp.\ RHS) for left (resp.\ right) hand side.

\section{Model and main results}\label{sec2}
\beginsec

\subsection{Queueing model, scaling and queueing control problem}
\label{sec20}

The queueing model is described in terms of a sequence of systems, indexed by $n\in\N$,
all defined on one probability space $(\Om,\calF,\PP)$.
Let us first introduce the structure of the system.
Classes belong to the set $\mathcal{I}$ of cardinality $I$ and throughout, $i$ will be used as a generic element of $\calI$, indexing a class. Similarly, servers belong to the set $\mathcal{K}$ of cardinality $K$ with elements denoted $k$. Additionally, if a server $k$ is able to serve a particular class $i$, the pair $(i,k)$ is called an {\it activity}. The set of all activities is denoted by $\mathcal{J}\subset \mathcal{I}\times \mathcal{K}$
and its cardinality by $J$.
Elements of $\mathcal{J}$ are  denoted $j=(i,k)$. We will write $\calJ_i$ (resp. $\calJ^k$) for the set of all activities involving class $i$ (resp., server $k$).

For the $n$th system, the processes of interest are as follows.
First, the processes $A^n=(A^n_i)_{i\in \calI}$, $S^n=(S^n_{j})_{j\in \calJ}$
represent arrival and potential service counting processes.
That is, $A^n_i(t)$ is the number of arrivals of class $i$ jobs until time $t$, $i\in \calI$,
and $S^n_{j}(t)$ is the number of service completions on activity $j$, by the time the server has devoted $t$ units of time to this activity,
$j\in \calJ$. Next, $X^n=(X^n_i)_{i\in \calI}$, $I^n=(I^n_k)_{k\in \calK}$, $D^n=(D^n_{j})_{j\in\calJ}$ and $T^n=(T^n_{j})_{j\in\calJ}$
denote queue length, cumulative idleness, departure, and cumulative busyness
processes. In other words, $X^n_i(t)$ is the number of class $i$ customers in the system at time $t$,
$I^n_k(t)$ is the cumulative time server $k$ has been idle by time $t$, $D^n_{j}(t)$
is the number departures due to activity $j$, and $T^n_{j}(t)$
is the cumulative time devoted to activity $j$.
The process $T^n_{j}$ takes the form $T_{j}(t)=\int_0^t\X^n_{j}(s)ds$,
where $\X^n_{j}(t)$ is the fraction of effort devoted to activity $j$. In particular, $\sum_{j\in \calJ^k}\X^n_{j}(t)\le1$ for every $k$.
Thus $\X^n$ is referred to as the {\it allocation process}.

The probabilistic model for the primitive processes is as follows.
Arrival rates $\la^n_i$ and service rates
$\mu^n_{j}$ are given, satisfying, for some constants
$\la_i\in(0,\iy)$, $\hat\la_i\in\R$ for any $i \in \calI$, and $\mu_{j}\in(0,\iy)$,
$\hat\mu_{j}\in\R$  for any $ j\in \calJ$,
\begin{align*}
	&
	\hat\la^n_i:=n^{-1/2}(\la^n_i-n\la_i)\to\hat\la_i,\\
	&
	\hat\mu^n_{j}:=n^{-1/2}(\mu^n_{j}-n\mu_{j})\to\hat\mu_{j},
\end{align*}
as $n\to\iy$.
For each $i$ a renewal process $\check A_i$ is given,
with interarrival distribution that has mean 1 and squared
coefficient of variation $0<C^2_{A_i}<\iy$.
Similarly, for each $j$, a renewal process $\check S_{j}$
is given with mean 1 interarrival and squared coefficient of
variation $C^2_{S_{j}}<\iy$. It is assumed that
$A^n$ and $S^n$ are given by
\[
A^n_i(t)=\check A_i(\la^n_it),
\qquad
S^n_{j}(t)=\check S_{j}(\mu^n_{j}t).
\]
We call the parameters $\{\lambda_i\}_{i\in\calI}$, $\{\mu_j\}_{j\in\calJ}$ the {\it first order data}
and $\{\hat\la_i\}_{i\in\calI}$, $\{\hat\mu_j\}_{j\in\calJ}$, $\{C_{A_i}\}_{i\in\calI}$ and $\{C_{S_j}\}_{j\in\calJ}$
the {\it second order data}.
It is assumed that the $I+J$ processes $\check A_i$, $\check S_{j}$
are mutually independent, have strictly positive inter-arrival distributions
and right-continuous sample paths.
The (IID) interarrivals of $\check A_i$ and $\check S_{j}$ are denoted by
$\check{a}_{i}(p)$ and $\check{u}_{j}(p)$, respectively, and those of the accelerated
processes $A^n_i$ and $S^n_{j}$ are given by
\begin{equation}\label{e01}
	a^n_{i}(p)=\dfrac{1}{\lambda^n_{i}}\check{a}_{i}(p),
	\qquad
	u^n_{j}(p)=\dfrac{1}{\mu^n_{j}}\check{u}_{j}(p).
\end{equation}
The system is assumed to start empty, that is, $X^n(0)=0$ for all $n$.

The first order service rates are encoded in an $I\times J$ matrix $R$. The element $R_{ij}=\mu_{j}$ gives the rate at which activity $j$ processes class $i$ if $j\in \calJ_i$ and one sets $0$ otherwise. Additionally, we will need a $K\times J$ matrix $G$. Elements of $G$ are either $0$ or $1$, and are given by $G_{k,j}=\mathds{1}_{j\in \calJ^k}$. The matrix $G$ serves as interference constraint between the activities.

Simple relations between the processes introduced above
are expressed as follows,
\begin{equation}\label{40-}
	D^n_{j}(t)=S^n_{j}(T^n_{j}(t)),
\end{equation}
\begin{equation}\label{40}
	X^n_i(t)=A^n_i(t)-\sum_{j\in \calJ_i}D^n_{j}(t),
\end{equation}
\begin{equation}\label{41}
	I^n_{k}(t)=t-(GT^n(t))_k,
\end{equation}
\begin{equation}\label{41+}
	\text{$X^n_i$ is non-negative for all $i$, and $I^n_{k}$ is non-negative non-decreasing for all $k$.}
\end{equation}

The tuple $(\check A,\check S)$ is referred to as the {\it stochastic primitives}.
In our formulation we will consider $T^n$ as the control process, which is obviously
equivalent to having the allocation process $\X^n$ as the control.
In view of equations \eqref{40-}, \eqref{40}, \eqref{41},
given the stochastic primitives, the control uniquely determines the processes $D^n$, $X^n$, $I^n$.
Let an additional process be defined on the probability space
denoted by $\Ups=(\Ups(k),k\in\N)$,
taking values in a Polish space $\calS_\text{rand}$
and assumed to be independent of the stochastic primitives, for each $n$
(there is no need to let $\Ups$ vary with $n$, as the primitives
are all defined on the same probability space).
It is included in the model in order to allow the construction of randomized controls.

The process $T^n$ is said to be an {\it admissible control for the queueing control problem
	(QCP) for the $n$-th system}
if for each $j\in \calJ$, $T^n_{j}$ has sample paths in $C^{0,+}_\R[0,\iy)$
that are $1$-Lipschitz,
and the associated processes $D^n$,
$X^n$ and $I^n$, given by \eqref{40-}, \eqref{40} and \eqref{41}, satisfy \eqref{41+};
furthermore, $T^n$ is adapted to the filtration $\{\calF^n_t\}_{t\geqslant 0}$
defined by $\calF^n_t=\s\{(A^n(s),D^n(s),s\in[0,t]), \Ups\}$.
Denote by $\calA^n$ the collection of all admissible controls for the QCP
for the $n$-th system.
\begin{remark}\label{rem1}
	
	i. It is desired to support models in which the controls depend on the history
	of the queue length and cumulative idleness processes
	in addition to that of the arrival and departures. Note that our definition allows
	such dependence because, in view of \eqref{40}, \eqref{41},
	the sigma field $\s\{(A^n(s),D^n(s),X^n(s),I^n(s), s\in[0,t]), \Ups\}$
	is equal to $\calF^n_t$.
	Similarly, it is desired to allow control decisions to depend on past control decisions.
	This is allowed by our definition, where indeed $T^n_t$ can be selected
	based on any information of $T^n|_{[0,t)}$, as follows from the fact
	the process is adapted, and so such information is contained in the filtration.
	\\
	ii. The ‘auxiliary’ randomness $\Ups$ can be used to create randomized controls.
	For example, a non-preemptive policy can decide which class is assigned to
	a server when it becomes available based on the value of one RV at a time,
	taken from a sequence of IID
	decision variables with prescribed distribution. In a preemptive-resume setting,
	one can randomize, for example, the time after a new arrival
	of a job of a high priority class, at which a job of of lower priority class is preempted.
	\\
	iii. In the problem formulation of \cite{bw1,bw2}
	the control process $T^n$ may anticipate the future.
	This difference from our setting
	is due to the fact that, in the above references,
	the underlying workload control problem
	has the property that the optimal control with and without a nonanticipativity
	constraint are identical. However, this does not occur in our context.
\end{remark}

The queue length process normalized at the diffusion scale is
defined for each $i\in \calI$ by $\hat X^n_i(t)=n^{-1/2}X^n_i(t)$.
As in most earlier treatments of the PSS, we consider the model with a discounted cost.
For the $n$-th system it is given by
\begin{equation}\label{42}
	\hat J^n(T^n)=\E\int_0^\iy e^{-\gamma t}h(\hat X^n(t))dt,
	\qquad
	T^n\in\calA^n,
\end{equation}
where $h(x)=h\cdot x$ and $\gamma>0$ and $h\in(0,\iy)^I$ are constants,
and $\hat X^n$ is the rescaled queue length process associated with
the admissible control $T^n$.
The value for the $n$-th system is defined by
\[
\hat V^n=\inf\{\hat J^n(T^n):T^n\in\calA^n\}.
\]
This completes the description of the queueing model and QCP.
The complete set of problem data consists of the stochastic primitives
mentioned above and the collection of parameters
\[
\la,\,\mu,\, \hat\la,\, \hat\mu,\, C_{A},\, C_{S},\,\gamma,\,h,\, G,\, R.
\]

\subsection{The linear program, dual problem and extended heavy traffic condition}\label{sec21}

The notion of heavy traffic is tied to a critical load condition.
For systems without control  the arrival and service rates determine the traffic intensity, and hence the heavy traffic condition.
In systems with routing control, such as PSS, the traffic intensity could depend on the control policy, and the heavy traffic
condition involves an optimization problem.
In particular, for the PSS, it was proposed in \cite{harlop} to
formulate the notion of heavy traffic via a linear program (LP) that they called the `static allocation problem',
involving only first order data, which expresses
how to best allocate the servers' effort. The condition then
asserts that the LP solution yields a critical load at each server.
This LP is as follows.

Given the first order data, the LP involves the unknowns $\xi\in \R_+^J$ and $\rho\in\R$. The components $\xi_j=\xi_{ik}$ correspond to the fraction of effort server $k$ dedicates to jobs from class $i$ on the fluid scale, whereas $\rho$ is an upper bound
on the load on each server.

\noi{\it Linear Program.}	Minimize $\rho $ subject to
\begin{equation}\label{02}
	\begin{cases}
		\ds
		R\xi=\lambda,
		\\ \\
		\ds
		G\xi\leqslant \rho 1_K,
		\\ \\
		\xi\geqslant 0_J.
	\end{cases}
\end{equation}
Denote the optimal objective value (minimum achievable $\rho$) of \eqref{02}  by $\rho^*$.

\noi{\it Extended heavy traffic condition.}
$\rho^*=1$.

The extended heavy traffic condition (EHTC)
is broader than the {\it heavy traffic condition} introduced in \cite{harlop}
that has been extensively used in the literature,
in which it has also been assumed that there is only one
$\xi$ satisfying \eqref{02} with $\rho=1$.

Under the EHTC, any solution is of the form $(\xi,1)$.
Let $\slp$ denote the closed and convex subset of $\R^{J}_+$ for which
the set of all solutions is given by $\slp\times\{1\}$.
With a slight abuse of terminology, we will sometimes refer to $\slp$
as the set of solutions to the LP.
We also introduce another linear program which is the dual of \eqref{02}:

\noi{\it Dual Problem.}	Maximize $y\cdot\lambda$ subject to
\begin{equation}\label{eq:dualreformulation}
	\begin{cases}
		\ds
		\sum_k z_k=1,
		\\ \\
		\ds
		yR\leqslant zG,
		\\ \\
		\ds z\geqslant 0_K.
	\end{cases}
\end{equation}
(The standard terminology in linear programming is to refer to the LP in \eqref{02} as the {\it primal} LP. We slightly abuse this terminology by simply refering to our primal LP as `the LP'.)
A solution to \eqref{eq:dualreformulation} is given by $(y^*,z^*)\in \R^{I}\times\R_+^K$. As we will argue later, the existence of a solution to \eqref{02} with $\rho^*=1$ implies the existence of at least one solution to \eqref{eq:dualreformulation}. The following is a standing assumption in
this paper.
\begin{assumption}\label{ass:lp}
	\begin{enumerate}
		\item
		The EHTC holds. In particular, there exists at least one solution to the
		LP.
		\item For any solution $\xi\in \calS_{LP}$ and any $k\in\calK$, $(G\xi)_k=1$.
		\item There is a unique solution, $(y^*,z^*)$, to the dual problem \eqref{eq:dualreformulation}.
	\end{enumerate}
\end{assumption}

Note that the first part of the above assumption implies that for all $i\in \mathcal{I}$, $\calJ_i\neq\emptyset$. The set $\calJ_i$ being empty means no activity involves class $i$. This is not compatible with $(R\xi)_i=\lambda_i>0$. This assumption also does not restrict us on the number of solutions to the LP.
All our results are valid whether there are multiple or one solution. The meaning of the second part is that all servers are fully loaded. As argued in \cite{harlop}, this could mean that we restrict ourselves to a `bottleneck subnetwork' of critically loaded servers. 
If an original network had involved more servers, some of which are subcritical and satisfied the EHTC, the additional servers could not have involved any of the classes in the bottleneck subnetwork. Otherwise, one could construct a solution to \eqref{02} with 
$\rho<1$ from a solution with $\rho=1$. Note that the second part also implies that for any $k\in\calK$, the set $\calJ^k\neq\emptyset$.

Under the more standard assumption that the HTC holds,
activities are classified as {\it basic} and {\it non basic}
according to whether $\xi_j>0$ or not, where $\xi$ is the unique solution to the LP.
In our setting there may be multiple solutions, hence a different terminology is
required.
Call an activity $j\in\calJ$ {\it potentially basic} if $\xi_j>0$ for some $\xi\in\slp$.
An activity that is not potentially basic is called {\it always nonbasic}.
The corresponding sets of activities are denoted
$\calA_{\rm PB}$ and $\calA_{\rm AN}$.
Under Assumption \ref{ass:lp}, we are able to prove the following lemma.
\begin{lemma}\label{lem:primaldual}
	\begin{enumerate}
		\item The set $\slp=\text{\rm ch}\{\xi^{*,m}, m=1,\ldots, M\}$
		is the closed convex hull of a finite number, $M$,
		of extreme points $\xi^{*,m}\in\slp$, called {\rm modes}.
		\item One has $y^*\cdot\lambda=1$.
		\item 		For any server $k$, one has $z^*_k>0$.
		\item For any activity $j=(i,k)\in\calA_{\rm PB}$, one has
		$y^*_i\mu_{j}=z^*_k.$
		\item For any activity $j=(i,k)\in\calA_{\rm AN}$, one has
		$y^*_i\mu_{j}<z^*_k.$
		\item For any class $i\in\calI$, there exists $j\in \calJ_i\cap \calA_{\rm PB}$.
		\item 	For any class $i\in\calI$, one has $y^*_i>0$.
	\end{enumerate}
\end{lemma}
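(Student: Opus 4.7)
The seven parts split into three kinds of arguments: polyhedral geometry, LP strong duality, and (strict) complementary slackness. For (1), $\slp$ is defined by the finitely many affine constraints $R\xi=\lambda$, $G\xi\le 1_K$, $\xi\ge 0$; each column of $G$ has exactly one nonzero entry (since an activity belongs to a unique server), so $G\xi\le 1_K$ together with $\xi\ge 0$ forces $\xi_j\in[0,1]$ for all $j$. Thus $\slp$ is a bounded polyhedron, which is the convex hull of its finitely many extreme points. For (2), one first checks via standard LP dualization (or by verifying weak duality directly from the constraints) that \eqref{eq:dualreformulation} is the dual of \eqref{02}, and then invokes LP strong duality: since $\rho^*=1$ by the EHTC, the dual optimum $y^*\cdot\lambda$ equals $1$. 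For (6), any $\xi\in\slp$ satisfies $\sum_{j\in\calJ_i}\mu_j\xi_j=\lambda_i>0$, so some $j\in\calJ_i$ must have $\xi_j>0$ and hence lies in $\calA_{\rm PB}$.

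Parts (3)--(5) are the heart of the argument and rest on the Goldman--Tucker strict complementary slackness (SCS) theorem: since both primal and dual are feasible with finite optimum, there exist optimal solutions $\xi^*\in\slp$ and $(\tilde y,\tilde z)$ such that for every activity $j=(i,k)$ exactly one of $\xi^*_j$ and $\tilde z_k-\tilde y_i\mu_j$ vanishes, and for every server $k$ exactly one of $\tilde z_k$ and $1-(G\xi^*)_k$ vanishes. By Assumption~\ref{ass:lp}(3) the dual is unique, so $(\tilde y,\tilde z)=(y^*,z^*)$. For (3), Assumption~\ref{ass:lp}(2) gives $(G\xi^*)_k=1$, so the server-$k$ primal slack is zero and SCS forces $z^*_k>0$. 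For (4), choose $\xi\in\slp$ witnessing $j\in\calA_{\rm PB}$, i.e.\ with $\xi_j>0$; ordinary complementary slackness applied to this primal optimum and the unique dual $(y^*,z^*)$ gives $y^*_i\mu_j=z^*_k$ (no SCS needed). For (5), the SCS-selected $\xi^*$ satisfies $\xi^*_j=0$ since $j\in\calA_{\rm AN}$, and SCS then yields the strict inequality $y^*_i\mu_j<z^*_k$.

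Part (7) is immediate from (6), (4) and (3): pick $j=(i,k)\in\calJ_i\cap\calA_{\rm PB}$; then $y^*_i\mu_j=z^*_k>0$ combined with $\mu_j>0$ gives $y^*_i>0$. The main delicate point is the combined use of strict complementary slackness with dual uniqueness: one must cast \eqref{02} (which mixes equality and inequality constraints and has the free scalar variable $\rho$) in a standard form that permits a direct appeal to the Goldman--Tucker theorem, and then carefully match each SCS condition to the inequality claims of (3) and (5). Everything else is polyhedral geometry, standard LP duality, or one line of elementary linear algebra.
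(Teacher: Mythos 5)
Your proposal is correct and follows essentially the same route as the paper: boundedness of the polyhedron for (1), LP strong duality for (2), ordinary and strict complementary slackness for (3)--(5), and elementary consequences for (6)--(7). The only cosmetic difference is that you invoke Goldman--Tucker in the form of a single strictly complementary pair of optimal solutions and then identify its dual part with $(y^*,z^*)$ via Assumption~\ref{ass:lp}(3), whereas the paper uses Schrijver's equivalent statement (reproduced as Theorem~\ref{th:app}) that, per constraint, exactly one of ``some primal optimum has positive slack'' and ``some dual optimum has a positive multiplier'' holds; both phrasings lead to the same conclusions once dual uniqueness is brought in.
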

We say that the service rates $\mu$ are {\it decomposable}
if $\mu_{ik}=\al_i\beta_k$ for all $(i,k)\in\calJ$ for some constants
$\al_i$ and $\beta_k$.
In this case we will assume without loss of generality
$\sum_k \beta_k=1$. In order to motivate the relevance of systems with multiple solutions to the LP, we present the next lemma. On the one hand, decomposability of service rates lead to an explicit solution for the dual. On the other hand, they are linked to non-uniqueness of the LP.
\begin{lemma}\label{lem1}
	\begin{enumerate}
		\item If $\mu$ are decomposable,
		\[\sum_i\dfrac{\la_i}{\al_i}=\sum_k\beta_k=1.\]
		\item If $\mu$ are decomposable, the unique solution to \eqref{eq:dualreformulation} is given by  $(y^*,z^*)$ with $(y^*_i)_{i\in\calI}=(\alpha_i^{-1})_{i\in\calI}$ and $(z^*_k)_{k\in\calK}=(\beta_k)_{k\in\calK}$.
		\item If $\mu$ are decomposable and $\calJ=\calI\times\calK$
		then the LP has multiple solutions. 
		\item  If $I=K=2$ and $\calJ=\calI\times\calK$ then the LP has multiple solutions if and only if
		$\mu$ are decomposable. In addition, the number of modes $M$ is at most 2.

	\end{enumerate}
\end{lemma}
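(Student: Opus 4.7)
The plan is to prove the four parts sequentially, since each one feeds the next. Part (1) is a direct computation: take any $\xi\in\slp$; by Assumption~\ref{ass:lp}(2), $\sum_i\xi_{ik}=1$ for every server $k$. Substituting $\mu_{ik}=\alpha_i\beta_k$ into the primal equalities $\sum_k\mu_{ik}\xi_{ik}=\lambda_i$, dividing by $\alpha_i$, and summing over $i$ yields $\sum_i\lambda_i/\alpha_i=\sum_k\beta_k\sum_i\xi_{ik}=\sum_k\beta_k$, which combined with the normalization $\sum_k\beta_k=1$ gives the identity.

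For part (2) I would guess $y^*_i=\alpha_i^{-1}$, $z^*_k=\beta_k$ and verify dual feasibility: $\sum_k z^*_k=1$; for every activity $j=(i,k)$, $(y^*R)_j=\alpha_i^{-1}\alpha_i\beta_k=\beta_k=z^*_k=(z^*G)_j$; and $z^*\geq 0$. The dual objective then equals $\sum_i\lambda_i/\alpha_i=1=\rho^*$ by part (1) and the EHTC, so strong LP duality makes $(y^*,z^*)$ optimal; uniqueness is Assumption~\ref{ass:lp}(3).

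For part (3), assuming the nontrivial case $I,K\geq 2$ (otherwise Assumption~\ref{ass:lp}(2) forces $\slp$ to be a single point), I would first check that $\xi^{(0)}_{ik}:=\lambda_i/\alpha_i$ lies in $\slp$: the $R$-equality uses $\sum_k\beta_k=1$, and the server-loading identity $\sum_i\xi^{(0)}_{ik}=1$ is exactly part (1). Then I would pick nonzero $a\in\R^I$, $b\in\R^K$ with $\sum_i a_i=0$ and $\sum_k\beta_k b_k=0$ (possible since $I,K\geq 2$) and set $\eta_{ik}=a_ib_k$. Since $(R\eta)_i=\alpha_ia_i\sum_k\beta_kb_k=0$ and $(G\eta)_k=b_k\sum_ia_i=0$, the perturbation $\eta$ sits in the null space of the equality constraints, and because $\xi^{(0)}>0$ componentwise, $\xi^{(0)}\pm\varepsilon\eta\in\slp$ for all sufficiently small $\varepsilon>0$.

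Part (4) is the core difficulty. One direction follows from part (3); for the converse in the $I=K=2$, $\calJ=\calI\times\calK$ case I would write out the $4\times 4$ coefficient matrix of the equality constraints (two $R$-rows and two $G$-rows) and compute its determinant by a short row expansion, obtaining $\mu_{12}\mu_{21}-\mu_{11}\mu_{22}$. Nonuniqueness of $\slp$ forces this quantity to vanish, which is the rank-one condition on the positive matrix $(\mu_{ik})$; a standard argument then produces $\alpha_i,\beta_k>0$ with $\mu_{ik}=\alpha_i\beta_k$, rescaled so that $\sum_k\beta_k=1$. For the bound $M\leq 2$ I would exhibit a $3\times 3$ minor (e.g.\ both $G$-rows together with one $R$-row, whose determinant is $\pm\mu_{11}\neq 0$) showing that the rank is exactly $3$. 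Hence $\slp$ is the intersection of a one-dimensional affine line with $\R^4_+$, a bounded line segment with at most two extreme points. The main obstacle is precisely this last step: verifying the rank equals $3$ (not lower) so that the solution set is a single line segment, and translating the determinantal identity into the explicit decomposition $\mu_{ik}=\alpha_i\beta_k$ with the prescribed normalization.
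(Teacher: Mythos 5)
Your proposal is correct, and parts (1)--(2) are essentially identical to the paper's proof. Part (3) is a mild generalization: the paper works with the specific perturbation $\delta_{ik}=(-1)^{i+k}/\beta_k$ for $i,k\leq 2$ and $0$ otherwise, which is a particular instance of your rank-one perturbation $\eta_{ik}=a_ib_k$ with $\sum_i a_i=0$, $\sum_k\beta_kb_k=0$; you also correctly flag the implicit requirement $I,K\geq 2$, which the paper's choice of $\delta$ tacitly assumes (and which is genuinely needed, since $\slp$ is a singleton whenever $I=1$ or $K=1$).

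In part (4) your route is genuinely different from, but equivalent to, the paper's. The paper forms $\delta=\xi^{(1)}-\xi^{(2)}$, shows via the four homogeneous equations that all components are nonzero and that $\mu_{11}/\mu_{12}=\mu_{21}/\mu_{22}$, and then exhibits an explicit spanning vector $\delta^0$ of the kernel to conclude $\slp$ is a line segment. You instead compute the determinant of the $4\times 4$ equality coefficient matrix, obtaining $\mu_{12}\mu_{21}-\mu_{11}\mu_{22}$, and argue that non-uniqueness forces this to vanish (the same proportionality condition), and you establish rank exactly $3$ via the minor on rows $\{R_1,G_1,G_2\}$ and columns $\{1,3,4\}$ with determinant $\pm\mu_{11}\neq 0$, so the kernel is one-dimensional and $\slp$ is a bounded segment with at most two extreme points. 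Your linear-algebraic packaging is somewhat cleaner and avoids the paper's case analysis on which $\delta_{ik}$ are nonzero; the paper's explicit $\delta^0$ has the advantage of exhibiting the kernel direction directly. The step you flag as the ``main obstacle'' --- rank exactly $3$ and extraction of $\alpha_i,\beta_k$ from the $2\times 2$ determinant identity --- is in fact fully routine and you have already sketched both halves (the $3\times 3$ minor gives rank $\geq 3$, vanishing of the $4\times 4$ determinant gives rank $\leq 3$; and one may take, e.g., $\beta_k=\mu_{1k}/(\mu_{11}+\mu_{12})$, $\alpha_1=\mu_{11}+\mu_{12}$, $\alpha_2=\mu_{21}/\beta_1$, checking $\alpha_2\beta_2=\mu_{22}$ via the determinant identity).
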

\subsection{Workload control problem, HJB equation
	and main result}\label{sec:wcpp}

Based on the notation introduced in the previous section we define the prelimit workload process by 
$$\hat W^n_t=y^*\cdot\hat X^n_t.$$
Here and throughout the paper, we fix $q\in\calI$ such that
\begin{equation}\label{145}
	h_q(y^*_q)^{-1}=\min_{i\in\calI}h_i(y^*_i)^{-1}.
\end{equation}
For any $i\in\calI$ and $j\in\calJ$, let $\sig_{A,i}:=\la_i^{1/2}C_{A_i}$, $\sig_{S,j}:=\mu_{j}^{1/2}C_{S_{j}}$, and 
\begin{equation}\label{50}
	b(\xi)=\sum_iy^*_i(\hat\la_i-\sum_{j\in \calJ_i}\hat\mu_{j}\xi_{j}),
	\qquad
	\sig(\xi)^2=
	\sum_i(y^*_i)^2(\sig_{A,i}^2+\sum_{j\in \calJ_i}\sig_{S,j}^2\xi_{j}),
	\qquad
	\xi=(\xi_{j})_{j\in\calJ}\in\slp.
\end{equation}
Let also
\begin{equation}
	b_m=b(\xi^{*,m}),\qquad \sig_m=\sig(\xi^{*,m}),\qquad m=1,\ldots M.\label{eq:param}
\end{equation}
Consider a one-dimensional controlled diffusion
with controlled drift and diffusion coefficients and reflection at the origin, given by
\begin{equation}\label{15}
	Z_t=z+\int_0^tb(\X_s)ds+\int_0^t\sig(\X_s)dB_s+L_t,
\end{equation}
where $\X$ is a control process, $B$ is a standard BM, $L$ is a reflection term
at zero, and $z\geqslant0$. A precise definition is as follows.

Given a filtration $(\calF'_t)_{t\geqslant 0}$, let $\frX((\calF'_t)_{t\geqslant 0})$ denote
the collection of $(\calF'_t)_{t\geqslant 0}$-progressively measurable
processes $\{\X\}$ taking values in $\R_+^{J}$,
for which one has $\PP(\text{for a.e.\ $t$, } \X_t\in\slp)=1$.
A tuple $\frS=(\Om',\calF',(\calF'_t)_{t\geqslant 0},\PP', B,\X,Z,L)$ is said to be
an {\it admissible control system for the WCP with initial condition $z$}
if $(\Om',\calF',(\calF'_t)_{t\geqslant 0},\PP')$ is a filtered probability space,
$B$, $\X$, $Z$ and $L$ are processes defined on it,
$B$ is a standard BM (SBM) and an $(\calF'_t)_{t\geqslant 0}$-martingale,
$\X\in\frX((\calF'_t)_{t\geqslant 0})$, $Z$ is continuous non-negative and $(\calF'_t)_{t\geqslant 0}$-adapted,
$L$ has sample paths in $C^{0,+}_\R[0,\iy)$
and is $(\calF'_t)_{t\geqslant 0}$-adapted, and equation \eqref{15} and
the identity $\int_{[0,\iy)}Z_tdL_t=0$ are satisfied $\PP'$-a.s.

Denoting by $\calA_{\rm WCP}(z)$ the collection of all control systems for the WCP
with initial condition $z$, the cost and value are defined as
\begin{equation}\label{51}
	J_\text{WCP}(z,\frS)=\E\int_0^\iy e^{-\gamma t}Z_tdt,
	\qquad
	V_\text{WCP}(z)=\inf\{J_\text{WCP}(z,\frS):\frS\in\calA_\text{WCP}(z)\}.
\end{equation}
Above, $\E$ denotes, with a slight abuse of notation, expectation w.r.t.\ the control system $\frS$.
A control system $\frS$ is said to be optimal for the WCP with initial condition
$z$ if $J_\text{WCP}(z,\frS)=V_\text{WCP}(z)$.

It is well known that the dynamic programming equation associated with
controlled diffusions as above
is of HJB type (see, for example, \cite[Section IV.5]{fle-son}).
Its role is to characterize the value function of the control problem.
In our case the HJB equation takes the following form.
For $(v_1,v_2,\xi)\in\R^2\times\slp$, let
\begin{align*}
&\bar\BH(v_1,v_2,\xi)=b(\xi)v_1+\frac{\sig(\xi)^2}{2}v_2,
\\
&\BH(v_1,v_2)=\inf_{\xi\in\slp}\bar\BH(v_1,v_2,\xi)
=\min_{m=1,\ldots, M}\bar\BH(v_1,v_2,\xi^{*,m}),
\end{align*}
where the identity on the RHS follows from the fact that both $b$ and $\sig^2$
are affine as a function of $\xi$, and $\slp=\text{\rm ch}\{\xi^{*,m}, m=1,\ldots, M\}$.
A classical solution to the HJB equation is a $C^2(\R_+:\R)$ function $u$ satisfying
\begin{equation}
	\label{14}
	\BH(u'(z),u''(z))+z-\gamma u(z)=0,
	\qquad z\in(0,\iy),
\end{equation}
and the boundary conditions at $0$ and $\iy$,
\begin{equation}\label{14+}
	u'(0)=0,
	\qquad
	|u(z)|<c(1+z), z\in\R_+, \text{ for some constant $c$}.
\end{equation}
It is well known that the
$C^2$ smoothness of the value function is tied to the question of existence
of a classical solution to the HJB equation such as \eqref{14}.
The precise statement in our setting is as follows.

\begin{proposition}\label{prop0}
	
	There exists a unique classical solution $u$ to \eqref{14}--\eqref{14+}.
	Moreover, $u=V_\text{\rm WCP}$.
\end{proposition}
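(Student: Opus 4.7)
The equation \eqref{14} is a one-dimensional concave Bellman equation, $\gamma u(z)-z=\min_m[b_m u'(z)+\tfrac{1}{2}\sig_m^2 u''(z)]$ on $(0,\iy)$ with Neumann boundary $u'(0)=0$. It is uniformly elliptic since $\sig_m^2\ges(y^*_q)^2\sig_{A,q}^2>0$ for every mode $m$. My plan is three-fold: (i) produce a classical solution via one-dimensional PDE theory for concave fully nonlinear uniformly elliptic equations; (ii) obtain uniqueness among linearly growing solutions by the maximum principle; and (iii) identify this unique solution with $V_\text{WCP}$ via a verification argument together with weak existence for the reflected SDE driven by the optimal feedback, whose coefficients are in general discontinuous.

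For existence, I would first solve the mixed Neumann--Dirichlet problem on $(0,R)$ with $u'(0)=0$ and $u(R)$ prescribed to match the anticipated linear growth. The operator is concave in $(u',u'')$, uniformly elliptic, and proper (the coefficient of $u$ is $-\gamma<0$), so standard PDE theory---Perron's method for viscosity existence followed by Evans--Krylov interior regularity, or, exploiting the 1D setting, a direct policy-iteration construction on the linear ODEs associated with each mode---supplies a $C^{2,\alpha}$ solution $u_R$. Affine super- and sub-solutions of the form $a+bz$, with slopes and intercepts explicit in $\{b_m,\sig_m^2\}$ and $\gamma$, give the $R$-uniform bounds $|u_R(z)|+|u_R'(z)|\les c(1+z)$. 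Interior Schauder estimates on $u_R''$ together with Arzel\`a--Ascoli then extract a subsequential $C^2$ limit $u$ on $(0,\iy)$ solving \eqref{14}--\eqref{14+}. For uniqueness, the difference $w=u_1-u_2$ of two classical linearly growing solutions satisfies a pointwise linear, uniformly elliptic, homogeneous equation $\tfrac{1}{2}\tilde\sig(z)^2 w''+\tilde b(z)w'-\gamma w=0$ with $(\tilde b,\tilde\sig)$ selected measurably from the finitely many modes; $w'(0)=0$, the linear growth of both $u_i$, and the maximum principle on $(0,R)$ as $R\to\iy$ force $w\equiv 0$.

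For $u=V_\text{WCP}$, I would prove the two inequalities separately. For $u\les V_\text{WCP}$: fix $\frS\in\calA_\text{WCP}(z)$ and apply It\^o's formula to $e^{-\gamma t}u(Z_t)$. The reflection integral $\int_0^T e^{-\gamma s}u'(Z_s)\,dL_s$ vanishes because $L$ is supported on $\{Z=0\}$ and $u'(0)=0$, while $\bar\BH(\cdot,\cdot,\X_s)\ges\BH(\cdot,\cdot)$ together with \eqref{14} give the integrand lower bound $\bar\BH-\gamma u\ges\BH-\gamma u=-Z_s$. Taking expectations, using $|u(z)|\les c(1+z)$ and standard second-moment estimates for the reflected SDE to justify $\E[e^{-\gamma T}u(Z_T)]\to 0$, and sending $T\to\iy$, yields $u(z)\les J_\text{WCP}(z,\frS)$. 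For $V_\text{WCP}\les u$: choose a measurable selector $m^*(z)\in\arg\min_m[b_m u'(z)+\tfrac{1}{2}\sig_m^2 u''(z)]$, set $\xi^*(z)=\xi^{*,m^*(z)}$, and construct a weak solution $(Z^*,B^*,L^*)$ of the reflected one-dimensional SDE with coefficients $b(\xi^*(\cdot))$ and $\sig(\xi^*(\cdot))$ via the martingale problem for uniformly elliptic 1D diffusions with bounded measurable coefficients (cf.\ \cite{kry02}). With $\X^*_t=\xi^*(Z^*_t)$, the resulting tuple $\frS^*$ is admissible, and It\^o's formula---now with the minimum attained pointwise---converts the previous inequality into the equality $J_\text{WCP}(z,\frS^*)=u(z)$.

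The main obstacle is this final step: the coefficients $b(\xi^*(\cdot))$ and $\sig(\xi^*(\cdot))$ are only measurable and are typically discontinuous across level sets of $u'$ and $u''$, so classical Lipschitz SDE machinery does not apply, and one must instead rely on one-dimensional weak existence (and uniqueness in law) for uniformly elliptic reflected diffusions with bounded measurable coefficients. One then has to verify that the filtration and Brownian motion delivered by the martingale-problem construction fit the admissibility requirements defining $\calA_\text{WCP}(z)$. The remaining ingredients---PDE existence, maximum-principle uniqueness, and the one-sided verification inequality---are comparatively routine one-dimensional adaptations of well-established methods.
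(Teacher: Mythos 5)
Your proposal is correct in its conclusions but takes a genuinely different route from the paper, so let me compare. The paper does not construct the classical solution by a direct PDE limit $R\to\infty$; instead it proves existence via a \emph{dynamic programming} argument: it solves the Dirichlet problem $\BH(u',u'')+z-\gamma u=0$ on $(0,z_0)$ with boundary data $u(0)=V_{\rm WCP}(0)$, $u(z_0)=V_{\rm WCP}(z_0)$ (using \cite[Theorem 17.18]{gil-tru}), identifies this with the value of a stopped control problem by a bounded-domain verification theorem, and invokes the dynamic programming principle to conclude the stopped value equals $V_{\rm WCP}$ on $[0,z_0]$ --- hence $V_{\rm WCP}$ itself is $C^2$ and solves \eqref{14}. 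The Neumann condition $V_{\rm WCP}'(0)=0$ and the linear growth bound are then derived \emph{probabilistically} from the reflected dynamics (Skorohod minimality and a hitting-time estimate), rather than built into the PDE problem. Finally the paper proves uniqueness by a verification argument identical in spirit to your step (iii), not by a maximum principle. Your route --- mixed Neumann--Dirichlet on $(0,R)$ with $R\to\infty$, Schauder/Arzel\`a--Ascoli, then a comparison principle for uniqueness, then two-sided verification to identify the limit with $V_{\rm WCP}$ --- is the classical ``pure PDE'' alternative and would also work. What the paper's route buys is that smoothness of $V_{\rm WCP}$ is established directly (no need to separately prove that the PDE solution you construct coincides with the control value); what your route buys is that the PDE existence/uniqueness is decoupled from the probabilistic machinery.

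Two points in your sketch are thinner than they look. First, the claim that linear growth plus the maximum principle on $(0,R)$ as $R\to\infty$ forces $w=u_1-u_2\equiv 0$ does not go through with an affine barrier: $w$ is a subsolution of one (measurably selected) linear operator $L_{m_2}$ and a supersolution of another $L_{m_1}$, but $w(R)$ need not vanish as $R\to\infty$, and an affine $\phi$ cannot dominate $w$ at infinity while keeping $L_m\phi<0$ with an arbitrarily small prefactor. One needs a barrier of the form $\phi(z)=a\cosh(\kappa z)$ with $\kappa>0$ chosen so small that $\tfrac{1}{2}\sigma_m^2\kappa^2+|b_m|\kappa<\gamma$ for every $m$: then $L_m\phi<0$, $\phi'(0)=0$ (matching the Neumann condition of $w$), $\phi$ eventually dominates any linearly growing function, and sending $a\downarrow 0$ gives $w\le 0$ and, symmetrically, $w\ge 0$. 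Second, in the verification step $V_{\rm WCP}\le u$, you must produce an \emph{admissible control system} in the sense of \S\ref{sec:wcpp}, not merely a weak solution of the reflected SDE; the paper handles this (in Lemma \ref{lem3}) by solving an unreflected one-dimensional SDE with bounded measurable, uniformly nondegenerate coefficients via \cite[Th.\ 5.5.15]{kar-shr}, and then obtaining the reflected process and its local time through Tanaka's formula, which yields the pair $(Z,L)$ together with the Skorohod identity $\int Z\,dL=0$ on the nose. A martingale-problem construction of a reflected diffusion with measurable coefficients would accomplish the same thing, but needs to be matched carefully to the admissibility requirements, including progressive measurability of $\X_t=\xi^u(Z_t)$, which requires checking that the sets $A^m_u$ are Borel (this is also part of Lemma \ref{lem3}).
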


A version of this diffusion control problem with $2$ modes was treated in \cite{Sheng78}.
There it was shown that the optimal control has a simple form: either one of the modes is always used, or there is a unique threshold, with
one mode used below the threshold and the other used above.
Those results are used in \cite{ourupperbound} to develop a policy and prove AO.
In Proposition \ref{prop2} we extend the condition for optimality of a single mode policy to the general setting we consider here.

The main result of this paper establishes an asymptotic lower bound.
\begin{theorem}\label{thm:lowerbound}
	$\liminf_n\hat V^n\ges V_0:=h_q(y^*_q)^{-1}V_{\rm WCP}(0)=h_q(y^*_q)^{-1}u(0)$.
\end{theorem}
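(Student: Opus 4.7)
The plan is to fix an arbitrary sequence $T^n \in \calA^n$ and show $\liminf_n \hat J^n(T^n) \geq V_0$, where along any subsequence one may assume the costs are bounded (else there is nothing to prove). Using $y^*_i > 0$ for all $i$ (Lemma \ref{lem:primaldual}(7)), the nonnegativity of $\hat X^n_i$, and the defining property \eqref{145} of $q$, a componentwise estimate gives $h\cdot \hat X^n_t \geq h_q(y^*_q)^{-1}\, y^*\cdot \hat X^n_t = h_q(y^*_q)^{-1}\hat W^n_t$ pointwise. Consequently
\[
\hat J^n(T^n) \;\geq\; h_q(y^*_q)^{-1}\, \E \int_0^\infty e^{-\gamma t}\, \hat W^n_t\, dt,
\]
reducing the problem to $\liminf_n \E \int_0^\infty e^{-\gamma t}\, \hat W^n_t\, dt \geq V_{\rm WCP}(0)$.

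The next step is to decompose $\hat W^n$ via \eqref{40-}--\eqref{41} together with Lemma \ref{lem:primaldual}(2),(4),(5). Writing $\hat X^n_i$ in terms of centered arrivals, centered (time-changed) services, and the busyness $T^n_j$, then multiplying by $y^*$, the $O(n^{1/2})$ terms organize thanks to $y^*\cdot \lambda = 1$ and $y^*_i\mu_j = z^*_k$ on $\calA_{\rm PB}$ into the representation
\[
\hat W^n_t \;=\; M^n_t \;+\; \int_0^t b(\X^n_s)\,ds \;+\; L^n_t \;+\; E^n_t,
\]
where $M^n_t=\sum_i y^*_i \hat A^n_i(t)-n^{-1/2}\sum_j y^*_{i(j)}(S^n_j(T^n_j(t))-\mu^n_j T^n_j(t))$, $L^n_t = n^{1/2}\sum_k z^*_k I^n_k(t) \geq 0$, and $E^n_t = n^{1/2}\sum_{j \in \calA_{\rm AN}}(z^*_{k(j)}-y^*_{i(j)}\mu_j)T^n_j(t) \geq 0$ by Lemma \ref{lem:primaldual}(5). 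Boundedness of $\E\int e^{-\gamma t}\hat W^n_t\,dt$, combined with nonnegativity of $L^n$, $E^n$ and moment control on $M^n$, forces $E^n$ to vanish uniformly on compacts, so every subsequential fluid-scale limit of $\X^n$ is supported on $\calA_{\rm PB}$; the componentwise bound $G\X^n\les 1_K$ and Assumption \ref{ass:lp}(2) then pin that limit down to $\slp$.

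The core obstacle, and the main novelty of the argument, is identifying the scaling limit of $M^n$ as $\int_0^\cdot \sigma(\X_s)\,dB_s$ for an SBM $B$ on a limit filtration $(\calF'_t)$. The 1-Lipschitz property of each $T^n_j$ together with the renewal FCLT for the primitives yields tightness of the full collection of rescaled processes; after Skorohod representation, $T^n_j(t) \to \int_0^t \X_j(s)\,ds$ with $\X \in \frX((\calF'_t))$ taking values in $\slp$, and the centered arrivals and services converge to independent BMs with the prescribed variances. The difficulty is that the time changes $T^n_j$ are random, and unlike in the single-mode setting their fluid limits are genuinely stochastic. My approach would be to invoke the multi-parameter time-change theorem of Kurtz \cite{kurtz80}: first verify that each $T^n_j(t)$ is a stopping time with respect to the multi-parameter filtration generated by $\check A$ and $\check S$ on $\R_+^{I+J}$---via an inductive construction tracking the event-by-event evolution of the network and the role of $\Ups$, using the $\calF^n_t$-adaptedness of $T^n$---and then apply Kurtz's theorem to preserve the martingale property under random time change. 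Taking weak limits and computing the quadratic variation produces a continuous $(\calF'_t)$-martingale with $\langle M\rangle_t = \int_0^t \sigma(\X_s)^2\,ds$, from which one extracts a SBM $B$ on $(\calF'_t)$ and recovers the SDE \eqref{15} satisfied by the limit $Z$ of $\hat W^n$.

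Finally, $Z$ is continuous and nonnegative by construction, $L^n \to L$ with $L \in C^{0,+}_\R[0,\infty)$, and the complementarity identity $\int Z\,dL = 0$ follows from the standard argument applied server-by-server at criticality (positivity of each $z^*_k$ from Lemma \ref{lem:primaldual}(3) is what lets the complementarity at individual servers aggregate into the scalar $L$). The tuple $\frS = (\Om',\calF',(\calF'_t),\PP',B,\X,Z,L)$ thus lies in $\calA_\text{WCP}(0)$. By Fatou's lemma (applied after truncating at a finite horizon $T$ and then sending $T\to\infty$, legitimate since $e^{-\gamma t}$ is integrable and the cost bound controls $\hat W^n$),
\[
\liminf_n \E \int_0^\infty e^{-\gamma t}\hat W^n_t\, dt \;\geq\; J_\text{WCP}(0,\frS) \;\geq\; V_\text{WCP}(0),
\]
which together with Proposition \ref{prop0} ($V_\text{WCP}(0)=u(0)$) yields $\liminf_n \hat V^n \geq V_0$. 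I expect the stopping-time verification and the correct invocation of Kurtz's theorem to dominate the technical work, the remaining ingredients being fairly standard adaptations of heavy-traffic methodology to the multi-mode setting.
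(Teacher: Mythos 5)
Your strategy matches the paper's at the level of the hard ingredients---the reduction $\hat H^n_t \geq h_q(y^*_q)^{-1}\hat W^n_t$, the decomposition of $\hat W^n$ into a martingale, a drift, the weighted idleness, and a nonnegative surcharge from always-nonbasic activities, and the use of multi-parameter stopping times together with Kurtz's time-change theorem to build the driving SBM $B$. But there is a genuine gap in the final assembly of the admissible WCP control system. You take $Z$ to be the limit of $\hat W^n$, $L$ to be the limit of $L^n = z^*\cdot\hat I^n$, and claim $\int Z\,dL=0$ ``follows from the standard argument applied server-by-server at criticality.'' For an arbitrary sequence $T^n\in\calA^n$ this fails: an admissible policy can idle servers while work is present, so $\hat L^n$ may increase on intervals where $\hat W^n>0$, and in the limit $\int Z\,dL$ would be strictly positive. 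Likewise your claim that the cost bound forces $E^n=\hat L^n_{\rm AN}$ to ``vanish uniformly on compacts'' is false at the diffusion scale: the fluid limit of $\X^n$ lies in $\slp$ (so its $\calA_{\rm AN}$-components are zero), but $n^{1/2}T^n_j$ for $j\in\calA_{\rm AN}$ may converge to a nontrivial nondecreasing process. The paper's Remark~\ref{rem5} flags exactly this pitfall: it is not claimed that $\hat W^n$ converges, nor that a limit $W$ equals $Z$; both are false in general.

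The paper's resolution is to avoid passing $(\hat W^n,\hat L^n,\hat L^n_{\rm AN})$ to the limit at all. It takes the limit only of the ``clean'' piece $\hat F^n$ (centered arrivals/services plus deterministic drift, with both nonnegative reflection-like terms stripped off), obtains $F$, and \emph{defines} $(Z,L):=\Gamma[F]$ via the Skorohod map on the half line; the complementarity $\int Z\,dL=0$ then holds by construction rather than by any prelimit argument. The bridge to the prelimit cost is the pathwise inequality $\hat W^n_t\geq\Gamma_1[\hat F^n](t)$ of Lemma~\ref{lem5}, which follows from the minimality property of $\Gamma_1$ and the nonnegativity of $\hat L^n$ and $\hat L^n_{\rm AN}$. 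One then passes to the limit in $\Gamma_1[\hat F^n]$ (using continuity of $\Gamma_1$, not convergence of $\hat W^n$) and applies Fatou over finite horizons. Without this device your tuple $\frS$ need not lie in $\calA_{\rm WCP}(0)$, and the bound $J_{\rm WCP}(0,\frS)\geq V_{\rm WCP}(0)$ is unavailable.
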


\subsection{Discussion}\label{sec:degen}

The result raises the question of whether the lower bound is tight,
in the sense that there exist policies that asymptotically achieve this bound.
When this is the case, one has $\lim_n\hat V^n=V_0$.
In the discussion that follows, the terms {\it uniqueness} and
{\it multiplicity}
refer to the existence of a unique solution, and respectively,
multiple solutions to the static allocation LP.
A key issue that arises here is related to possible degeneracy of the LP solution:
A basic solution of an LP is called \emph{degenerate} if one or more basic variables are zero (cf. Definition 3.1 in \cite{gol-tod}).

Consider first the case of uniqueness.
As pointed out in \cite{harlop}, in order for CRP to be achieved using basic activities the solution must be nondegenerate.
Moreover, when uniqueness holds,
our assumption of uniqueness of dual problem solutions boils down to CRP,
and therefore Theorem~\ref{thm:lowerbound} recovers the asymptotic lower bound
from \cite{bw2}.
In this case, \cite{bw2} also constructs policies that achieve this bound, and thus
the question of tightness is answered affirmatively.
However, when the unique LP solution is degenerate,
not all servers can communicate in the graph formed by the basic activities.
In this case it is expected that the above lower bound does
not capture the correct asymptotics.

In the case of multiplicity, the situation is more interesting.
Even if all modes ( = extreme points of $\slp$ = basic optimal solutions) are degenerate, there are elements in $\slp$ for which the associated graph (where an edge between class $i$ and server $k$ exists when $\xi_j >0$ for $j=(i,k)\in\calJ$) yields communication between all servers.
As can be seen in our discussion of the HJB equation and associated optimal policies, the control used in the optimal policy indeed corresponds to extreme points of $\slp$.
If none of the extreme points is a degenerate solution to the LP, using a policy that dynamically switches between modes
as prescribed by the HJB equation should be enough to yield asymptotic optimality.
This statement has been proved
in \cite{ourupperbound} in the case $I=K=2$, $J=4$,
under multiplicity and nondegeneracy,
by constructing policies that achieve the lower bound.
As a result, the tightness question is settled in this case as well.

The difficulty one faces when
some of the modes correspond to degenerate LP solutions is that while using them it may not be possible to achieve the state space collapse needed to have the cost reach the lower bound.
Although the presence or absence of degeneracy does not affect our lower bound, a lower bound that is not asymptotically achievable is not as interesting as one that is.
However, in contrast to the case of uniqueness, in the case of
multiplicity,
the interior of $\slp$ contains nondegenerate solutions, and it should be true that
by carefully choosing controls that approach degenerate modes,
a policy achieving the lower bound can be obtained even in this case.
We therefore pose the following.

\begin{conjecture}\label{conj:multiplicity}
	Let the standing assumptions of this paper hold, and assume further that
	there are multiple solutions to the LP. Then the lower bound
	of Theorem \ref{thm:lowerbound} is tight. That is,
	there exist sequences of policies $T^n$ such that $\lim_n \hat{J}^n(T^n)=V_0$,
	and consequently $\lim_n\hat V^n=V_0$.
\end{conjecture}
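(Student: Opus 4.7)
The plan is to construct, for each $n$, an admissible policy $T^n$ whose cost $\hat J^n(T^n)$ converges to $V_0$. The strategy proceeds in four stages: (i) extract a feedback control for the WCP from the classical solution $u$ of the HJB equation; (ii) perturb this control away from the extreme points of $\slp$ in order to avoid any degeneracy; (iii) implement the perturbed feedback at the prelimit level by a priority-based policy that simultaneously enforces state space collapse (SSC) onto the direction of the cheapest class $q$; and (iv) pass to the limit and optimize the perturbation.

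For Steps (i)--(ii), define $m^*(z)\in\argmin_{m}\bar\BH(u'(z),u''(z),\xi^{*,m})$, so that $\X^*_t=\xi^{*,m^*(Z^*_t)}$ is an optimal WCP control and the resulting $Z^*$ attains $V_{\rm WCP}(0)=u(0)$. If some extreme mode $\xi^{*,m}$ is degenerate, the graph of activities it supports may fail to connect all servers to class $q$, obstructing SSC at the prelimit level. Under multiplicity, however, the interior of $\slp$ is nonempty and contains a nondegenerate element $\bar\xi$. Define the perturbed modes $\xi^{*,m,\eps}=(1-\eps)\xi^{*,m}+\eps\bar\xi$ for small $\eps>0$. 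Since $b(\xi)$ and $\sig^2(\xi)$ are affine and continuous in $\xi$, the WCP driven by $\X^{*,\eps}_t=\xi^{*,m^*(Z^{*,\eps}_t),\eps}$ incurs a cost $V^\eps$ that converges to $V_{\rm WCP}(0)$ as $\eps\downarrow 0$, by a standard stability argument for reflected SDEs with bounded, continuous coefficients.

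For Step (iii), I would design $T^{n,\eps}$ as a discrete-review policy that, at each review epoch, reads the current workload $\hat W^n$, computes the target allocation $\xi^{*,m^*(\hat W^n),\eps}$, and during the ensuing interval implements the corresponding time allocation across activities while giving strict preemptive priority to class $q$ on every server $k$ with $(q,k)\in\calJ$. The interior perturbation guarantees that every potentially basic activity is used at a uniformly positive rate, so that the graph of \emph{implemented} activities is connected throughout. This connectivity, combined with priority on class $q$, is the mechanism driving SSC: on diffusion scale, $\hat X^n_i\to 0$ for $i\ne q$ while $\hat X^n_q\approx \hat W^n/y^*_q$, whence $h\cdot\hat X^n_t\approx (h_q/y^*_q)\hat W^n_t$. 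Simultaneously, writing $\hat W^n$ as a stochastic integral against the martingale parts of the arrival and service processes (via the random time changes $T^n_j$) and exploiting continuity of $b,\sig$ in $\xi$, one shows $\hat W^n\Rightarrow Z^{*,\eps}$. The hardest step of the argument lies here: proving SSC when the mode switches stochastically and the graph of implemented activities changes with it. One must control the short-time fluid dynamics under each $\xi^{*,m,\eps}$ to ensure that the non-$q$ queues drain faster than the switching rate, which requires a uniform (in small $\eps$ and large $n$) lower bound on the drainage rate. This is where the nondegeneracy of every $\xi^{*,m,\eps}$, $\eps>0$, is used in an essential way, and where a graph-theoretic argument identifying, for each mode, an interior perturbation that routes work to $q$ through every server is needed.

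Combining Step (iii) with dominated convergence and a Skorohod representation yields
\[
\lim_{n\to\iy}\hat J^n(T^{n,\eps})
\;=\;\frac{h_q}{y^*_q}\,\E\int_0^\iy e^{-\gamma t}Z^{*,\eps}_t\,dt
\;=\;\frac{h_q}{y^*_q}\,V^\eps.
\]
A diagonal extraction $T^{n,\eps_n}$ with $\eps_n\downarrow 0$ slowly enough then gives $\lim_n\hat J^n(T^{n,\eps_n})=(h_q/y^*_q)V_{\rm WCP}(0)=V_0$, which together with Theorem~\ref{thm:lowerbound} proves the conjecture. The framework is the one already implemented in \cite{ourupperbound} for the case $I=K=2$, $J=4$ under nondegeneracy; extending it requires the interior perturbation device above plus a multidimensional SSC argument valid for arbitrary $I,K,J$ satisfying Assumption~\ref{ass:lp}.
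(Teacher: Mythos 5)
The statement you set out to prove is posed in the paper as Conjecture~\ref{conj:multiplicity}, not as a theorem; the paper contains no proof of it. The companion paper \cite{ourupperbound} verifies it only for $I=K=2$, $J=4$ under nondegeneracy, and Corollary~\ref{cor2} handles the special case where a single mode is WCP-optimal. The general case---especially when modes are degenerate, as in Example~(C)---is explicitly left open. Your submission is therefore a program for attacking an open problem, not a proof, and must be judged as such.

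As a program the high-level outline is sensible (perturb the optimal feedback into the interior of $\slp$ to sidestep degeneracy, then implement the perturbed feedback at the prelimit), but it contains a substantive error and an acknowledged gap. The error: you propose giving strict preemptive priority to class $q$. But $q$ is chosen in \eqref{145} as the class with the \emph{smallest} cost-to-workload ratio $h_i(y^*_i)^{-1}$, and \eqref{144} shows that near-equality requires $\hat X^n_i\approx 0$ for all $i\ne q$, i.e., all workload should accumulate in class $q$. The implementation must therefore give class $q$ the \emph{lowest} priority (this is how the class $i^*$ functions in \cite{bw2}, and the discussion following Lemma~\ref{lem5} says exactly that one must ``keep $\hat X^n_i$, $i\ne q$ close to zero''). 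Giving $q$ highest priority does the opposite and would drive the cost up, not down. The gap---which you flag yourself---is the uniform-in-$(\eps,n)$ state space collapse argument while the mode switches stochastically between interior allocations: without a proof that the non-$q$ queues drain fast enough relative to the switching rate, the convergence $\hat J^n(T^{n,\eps})\to (h_q/y^*_q)V^\eps$ does not follow, and this is precisely the obstacle that makes the statement a conjecture rather than a theorem. Finally, the claim that $V^\eps\to V_{\rm WCP}(0)$ by a ``standard stability argument for reflected SDEs with continuous coefficients'' should be stated more carefully: the perturbed feedback $z\mapsto\xi^{*,m^*(z),\eps}$ is still discontinuous in $z$, and $V^\eps$ is the cost of a suboptimal admissible control, not the value of a perturbed HJB problem; what is actually needed is continuity of $\eps\mapsto J_{\rm WCP}(0,\frS^{u,\eps})$ along the specific family of weak solutions produced by Lemma~\ref{lem3}(1), which is plausible (via the affineness of $b,\sig^2$ in $\xi$ and Girsanov-type arguments) but not automatic.
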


We will argue in Corollary \ref{cor2} that
the work done under the uniqueness assumption carries over to a case where  multiplicity
is possible but the WCP is optimally controlled by a single mode. In particular, the tree-based threshold policy introduced by Bell and Williams \cite{bw2} based on the optimizing mode achieves the lower bound from Theorem \ref{thm:lowerbound} and thus verifies the
above conjecture in this case.

\subsection{Examples}\label{sec:ex}

We consider five examples that yield the extended heavy traffic condition with multiplicity. 
Three of the examples involve two classes and two servers, and two examples involves three classes and three servers. 

{\sc Examples (A) and (B).}
Consider a $2$-class, $2$-server system with four activities.
Label the activities $j=(i,k)$ via the correspondence $1=(1,1)$,
$2=(1,2)$, $3=(2,1)$, $4=(2,2)$. Let
\[
\mu_1=\mu_{11}=3, \qquad
\mu_2=\mu_{12}=4, \qquad
\mu_3=\mu_{21}=6, \qquad
\mu_4=\mu_{22}=8.
\]
Note that these service rates are decomposable, allowing
multiple solutions to the LP.
(Under our convention that $\beta_1+\beta_2=1$ we have $\alpha_1=7, \alpha_2=14, \beta_1=3/7,$ and $\beta_2=4/7$.)
The matrices $R$ and $G$ are given by
\[
R= \begin{pmatrix}\mu_1 & \mu_2 & 0 & 0 \\ 0 & 0 & \mu_3 & \mu_4 \end{pmatrix}
= \begin{pmatrix} 3 & 4 & 0 & 0 \\ 0 & 0 & 6 & 8 \end{pmatrix},
\qquad
G= \begin{pmatrix} 1 & 0 & 1 & 0 \\ 0 & 1 & 0 & 1 \end{pmatrix}.
\]
For the arrival rates, consider two examples, namely
\[
\text{(A)\quad $\la_1=5$, $\la_2=4$, \qquad (B) \quad $\la_1=3.5$, $\la_2=7$.}
\]
Then the linear program takes the form: Minimize $\rho$ subject to
\begin{equation}\label{400}
\begin{cases}
&3\xi_{11}+4\xi_{12}=\la_1,\\
&6\xi_{21}+8\xi_{22}=\la_2,\\
&\xi_{11}+\xi_{21}\le\rho,\\
&\xi_{12}+\xi_{22}\le\rho,\\
&\min\xi_{ik}\ge0.
\end{cases}
\end{equation}
The dual program takes the form: Maximize $y_1\la_1+y_2\la_2$ subject to
\begin{equation}\label{401}
\begin{cases}
& z_1+z_2=1,
\quad \min_kz_k\ge0,\\
&(3y_1,4y_1,6y_2,8y_2)\le(z_1,z_2,z_1,z_2).
\end{cases}
\end{equation}

Consider the set of constraints \eqref{400} of the LP, where
the third and fourth constraints hold as equalities and $\rho=1$.
By a direct calculation in the case of Example (A),
the solutions of this set are all given as convex combinations of the two
solutions
\[
\textstyle
\xi^{*,1}=(1,\frac{1}{2}, 0, \frac{1}{2})^T,
\qquad
\xi^{*,2}=(\frac{1}{3}, 1,\frac{2}{3},0)^T.
\]
In Example (B), the same is true with
\[
\textstyle
\xi^{*,1}=(1,\frac{1}{8}, 0, \frac{7}{8})^T,
\qquad
\xi^{*,2}=(0,\frac{7}{8},1,\frac{1}{8})^T.
\]

Note that the duals for Examples (A) and (B) differ only in their objective functions - the constraints are the same.
It is easy to check that $y=(\frac{1}{7},\frac{1}{14})$, $z=(\frac{3}{7},\frac{4}{7})$ satisfies the constraint \eqref{401}.
This yields an objective value of $1$ for  both Examples (A) and (B).
The duality theorem (Theorem \ref{th:duality}) shows that this solution is optimal for the dual in both examples, and, moreover,
that $\rho^*=1$. 
Hence the set of solutions to the LP is given by $\rho^*=1$
and $\slp={\rm ch}\{\xi^{*,1},\xi^{*,2}\}$, and $\xi^{*,1}$, $\xi^{*,2}$ are the modes.
Figures \ref{fig1} and \ref{fig2} present these modes in the context of
the graphs formed by classes and servers as nodes, and activities
with positive fraction of effort as edges.

These calculations show that, in both examples, the system is critically loaded
and there are multiple ways in which
the effort can be allocated so as to meet the demand. In other words,
the EHTC holds while the HTC does not.
Also, in both examples, both modes satisfy $\sum_k\xi^*_{ik}=1$, $i=1,2$,
expressing that the servers are fully occupied.
It is thus seen that Assumption \ref{ass:lp}(1)--(2) hold.

\begin{figure}[h]
	\centering
	\vspace{.8em}
	\begin{tikzpicture}[scale=1]
		\node[shape=circle,draw=black] (A) at (-1,1) {1};
		\node[shape=circle,draw=black] (B) at (1,1) {2};
		\node[shape=circle,draw=black] (C) at (1,-1) {2};
		\node[shape=circle,draw=black] (D) at (-1,-1) {1};
		\path [thick](B) edge node[right] {} (C);
		\path [thick](D) edge node[left] {\quad\quad} (A);
		\path[thick](A)edge node[left]{}(C);
		
		\node[shape=circle,draw=black] (E) at(4,1) {1};
		\node[shape=circle,draw=black] (F) at (6,1) {2};
		\node[shape=circle,draw=black] (G) at (6,-1) {2};
		\node[shape=circle,draw=black] (H) at (4,-1) {1};
		\path [thick] (F) edge node[left] {} (H);
		\path [thick](H) edge node[left] {\quad\quad} (E);
		\path[thick](E)edge node[left]{}(G);
		
	\end{tikzpicture}
	\\
	\flushleft
	\vspace{-10.7em}
	
	arrival rates $\la_i$
	\hspace{5.1em} $5$ \hspace{5.3em} $4$
	\hspace{8.4em} $5$ \hspace{5.3em} $4$
	
	\vspace{.5em}
	
	class labels $i$
	
	\vspace{3.1em}
	
	service rates $\mu_j$
	\hspace{4em} $3$ \hspace{3em} $4$ \hspace{2.6em} $8$
	\hspace{7.2em} $3$ \hspace{.9em} $6$ \hspace{2.8em} $4$
	
	\vspace{.7em}
	
	server labels $k$
	
	\vspace{.9em}
	
	fractions of effort $\xi_{ik}$
	\hspace{2em} $1$ ($0$)
	\hspace{3.7em} $\frac{1}{2}$\ \ $\frac{1}{2}$
	\hspace{6.8em} $\frac{1}{3}$ \ \ $\frac{2}{3}$ \hspace{3.5em} $1$ ($0$)

\vspace{.7em}

	\hspace{13.5em} mode 1 \hspace{12.1em} mode 2
	\caption{\label{fig1}\sl
	The two modes in Example (A).
	}
\end{figure}
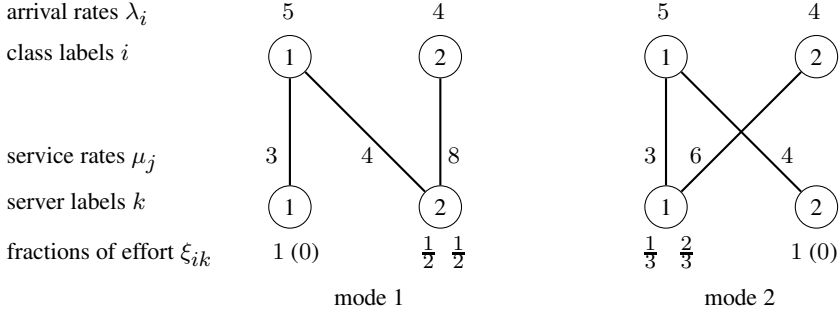

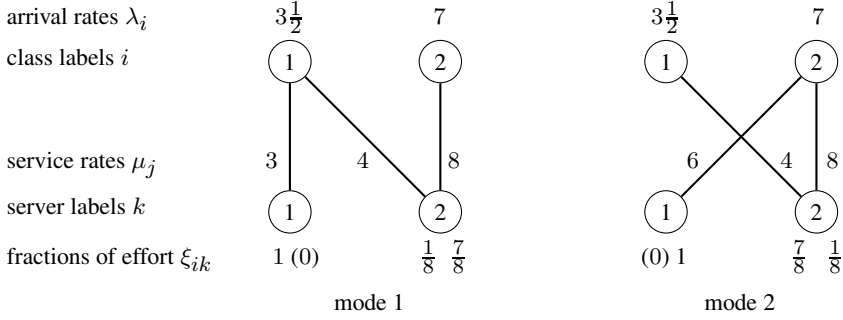
\begin{figure}[h]
	\centering
	\vspace{.8em}
	\begin{tikzpicture}[scale=1]
		\node[shape=circle,draw=black] (A) at (-1,1) {1};
		\node[shape=circle,draw=black] (B) at (1,1) {2};
		\node[shape=circle,draw=black] (C) at (1,-1) {2};
		\node[shape=circle,draw=black] (D) at (-1,-1) {1};
		\path [thick](B) edge node[right] {} (C);
		\path [thick](D) edge node[left] {\quad\quad} (A);
		\path[thick](A)edge node[left]{}(C);
		
		\node[shape=circle,draw=black] (I) at (4,1) {1};
		\node[shape=circle,draw=black] (J) at (6,1) {2};
		\node[shape=circle,draw=black] (K) at (6,-1) {2};
		\node[shape=circle,draw=black] (L) at (4,-1) {1};
		\path [thick](J) edge node[right] {} (K);
		\path  [thick](J) edge node[left] {} (L);
		\path[thick](I)edge node[left]{}(K);
		
	\end{tikzpicture}
	\\
	\flushleft
	\vspace{-10.7em}
	
	arrival rates $\la_i$
	\hspace{4.8em} $3\frac{1}{2}$ \hspace{4.9em} $7$
	\hspace{8.1em} $3\frac{1}{2}$ \hspace{5.em} $7$
	
	\vspace{.5em}
	
	class labels $i$
	
	\vspace{3.1em}
	
	service rates $\mu_j$
	\hspace{4em} $3$ \hspace{2.8em} $4$ \hspace{2.8em} $8$
	\hspace{9.em} $6$ \hspace{2.9em} $4$ \hspace{.9em} $8$
	
	\vspace{.7em}
	
	server labels $k$
	
	\vspace{.9em}
	
	fractions of effort $\xi_{ik}$
	\hspace{2em} $1$ ($0$)
	\hspace{3.7em} $\frac{1}{8}$\ \ $\frac{7}{8}$
	\hspace{6.8em} ($0$) 1 \hspace{3.8em} $\frac{7}{8}$ \ \ $\frac{1}{8}$

\vspace{.7em}

	\hspace{13.5em} mode 1 \hspace{12.1em} mode 2
	\caption{\label{fig2}\sl
	The two modes in Example (B).
	}
\end{figure}

To complete the verification of our standing assumption
it remains to show that Assumption \ref{ass:lp}(3) holds, i.e.,
that the dual program has a unique solution.
Our argument works for both Examples (A) and (B).
Note that in both examples, a strictly positive quantity of each of the four activities is used in at least one of the modes.
Thus, by complementary slackness (Lemma \ref{lem:comp-slack}(iii)), the constraints in the 
second line of \eqref{401} all hold at equality.
(This argument is carried out in  more detail in the proof of part 4 of Lemma \ref{lem:primaldual} in \S \ref{sec3}.)
This yields $18 y_1 y_2=z_1^2$ and $32y_1y_2=z_2^2$, so that $z_2=\frac{4}{3} z_1$.
Combined with $z_1+z_2=1$ this yields a unique solution to the dual.

Going back to the graphs shown in Figures \ref{fig1} and \ref{fig2},
it is seen that the two examples differ in the way the two modes are paired.
In the case of Example (A), the non-basic activity (i.e., the one for which $\xi_j=0$)
switches a server between the two modes, whereas it switches a class
in Example (B). Although this distinction has no consequences in this paper,
it turns out to be of crucial significance when it comes to implementing
AO policies, as in \cite{ourupperbound}. As is also shown in \cite{ourupperbound},
it is impossible to have a pairing of modes in which the non-basic activity
switches both a class and a server.
Furthermore, the results of \cite{ourupperbound},
which show that the lower bound of Theorem \ref{thm:lowerbound}
is tight, cover these two examples.

\skp

{\sc Example (C).}
Keep $\mu_j$, $j=1,\ldots,4$ as above, but let $\la_1=4$ and $\la_2=6$.
In this case the EHTC still holds, with 
\[
\textstyle
\xi^{*,1}=(0,1, 1, 0)^T,
\qquad
\xi^{*,2}=(1,\frac{1}{4},0,\frac{3}{4})^T.
\]

This is an example of a degenerate LP as discussed above.
Note that the  dual solution is the same as in Examples (A) and (B), and is unique.
In particular, this example satisfies our standing assumption
and so the lower bound of Theorem \ref{thm:lowerbound} is valid.
Because of the LP degeneracy, the results of \cite{ourupperbound}
do not cover this example, and because, further, there are
multiple solutions to the LP, this example falls into the category
of problems to which Conjecture \ref{conj:multiplicity} refers,
stating that nevertheless this lower bound is tight.

\skp

{\sc Example (D).}
We next provide an example that goes beyond uniqueness of solutions
to the dual.
We consider a 3-class, 3-server system with seven activities. Let 
\[
\mu_{11}=3, ~~\mu_{12}=4,  ~~\mu_{21}=6,  ~~\mu_{22}=8, ~~\mu_{23}=6, ~~\mu_{32}=7, ~~\mu_{33}=6.
\]
Although the services rates are  not decomposable (because $\frac{\mu_{22}}{\mu_{23}}\neq\frac{\mu_{32}}{\mu_{33}}$), the service rates in the 2 by 2 
subsystem consisting of classes 1 and 2, and  servers 1 and 2, are decomposable.

Then the matrices $R$ and $G$ are given by
\[
R= \begin{pmatrix} 3 & 4 & 0 & 0 & 0 & 0 & 0\\ 0 & 0 & 6 & 8 & 6 & 0 & 0\\
0 & 0 & 0 & 0 & 0 & 7 & 6\end{pmatrix},
\qquad
G= \begin{pmatrix} 1 & 0 & 1 & 0 & 0 & 0 & 0\\ 0 & 1 & 0 & 1  & 0 & 1 & 0\\
 0 & 0 & 0 & 0 & 1 & 0 & 1\end{pmatrix}.
\]

We consider the arrival rates $\lambda_1=5, \lambda_2=4, \lambda_3=6$.
The LP in this case is: 
Minimize $\rho$ subject to
\begin{equation}\label{500D}
\begin{cases}
&3\xi_{11}+4\xi_{12}=5,\\
&6\xi_{21}+8\xi_{22}+ 6\xi_{23}=4,\\
&7\xi_{32}+ 6\xi_{33}=6,\\
&\xi_{11}+\xi_{21}\le\rho,\\
&\xi_{12}+\xi_{22}+\xi_{32}\le\rho,\\
&\xi_{23}+\xi_{33}\le\rho,\\
&\min_{ik}\xi_{ik}\ge0.
\end{cases}
\end{equation}
The dual program takes the form: Maximize $5y_1+4y_2+6y_3$ subject to
\begin{equation}\label{401D}
\begin{cases}
& z_1+z_2+z_3=1,
\quad \min_kz_k\ge0,\\
&(3y_1,4y_1,6y_2,8y_2, 6y_2,7y_3,6y_3)\le(z_1,z_2,z_1,z_2,z_3,z_2,z_3).
\end{cases}
\end{equation}

Let
$$\xi=(\xi_{11},\xi_{12},\xi_{21},\xi_{22},\xi_{23},\xi_{32},\xi_{33}).$$
The optimal objective value is $\rho^*=1$, and the set $\slp$  is again a line segment with extreme points
\[
\textstyle
\xi^{*,1}=(1,\frac{1}{2}, 0, \frac{1}{2},0,0,1)^T,
\qquad
\xi^{*,2}=(\frac{1}{3}, 1,\frac{2}{3},0,0,0,1)^T.
\]
Note that $\xi^*_{23}=\xi^*_{32}=0$ in both modes, so both modes correspond to the network 
separating into two subnetworks.

In this case the dual also has multiple solutions. In particular, if we take $\eta \in [\frac{3}{10}, \frac{24}{73}]$, and
$$z_1=\frac{3}{7}(1-\eta), \qquad z_2=\frac{4}{7}(1-\eta), \qquad  z_3=\eta,$$
$$y_1=\frac{1}{7}(1-\eta), \qquad y_2=\frac{1}{14}(1-\eta), \qquad  y_3=\frac{1}{6}\eta,$$
then $(y,z)$ solves the dual.

The duality theorem provides an easy verification that the above values of $\xi, y,z$ are optimal for their respective problems. 
We simply need to verify that they are feasible and yield the same objective value of $1$. 
This verification follows by simply plugging in the values given above. 

We can see that the above $2$ modes are the only ones as follows.
The fact that $\xi^*_{23}=\xi^*_{32}=0$ in all modes follows from complementary slackness:
for $\eta =0.32 \in (\frac{3}{10}, \frac{24}{73}]$ we have $6y_2 < z_3$ and $7y_3 < z_2$,
so complementary slackness implies that $\xi^*_{23}=\xi^*_{32}=0$.
It is then immediate from \eqref{500D} that $\xi^*_{33}=1$.  With $\xi^*_{33}=1, \xi^*_{23}=\xi^*_{32}=0$, 
\eqref{500D} reduces to \eqref{400} with $\lambda_1=5, \lambda_2=4$. 
Note that the first $4$ entries of $\xi^{*,1}$ and $\xi^{*,2}$ match the corresponding values in Example (A).

This system satisfies parts $1$ and $2$ of Assumption \ref{ass:lp}, but not part $3$.
Multiplicity of the dual solution, roughly speaking, implies that the workload process is not one-dimensional.
PSS of this type are considered in \cite{pes16}, but this particular example is excluded from their treatment because they assume that the  solution 
of the primal LP is unique, and that is not true in this example.

\skp
  
  
{\sc Example (E).}
Finally, we consider the 3-class, 3-server system of Example (D) with slightly modified service rates. Let 
\[
\mu_{11}=3, ~~\mu_{12}=4,  ~~\mu_{21}=6,  ~~\mu_{22}=8, ~~\mu_{23}=6, ~~\mu_{32}=8, ~~\mu_{33}=6.
\]
Here, the services rates are decomposable, with $\alpha_1=10, \alpha_2=20, \alpha_3=20, \beta_1=3/10, \beta_2=4/10,$ and $\beta_3=3/10$. 
The matrices $R$ and $G$ are given by
\[
R= \begin{pmatrix} 3 & 4 & 0 & 0 & 0 & 0 & 0\\ 0 & 0 & 6 & 8 & 6 & 0 & 0\\
0 & 0 & 0 & 0 & 0 & 8 & 6\end{pmatrix},
\qquad
G= \begin{pmatrix} 1 & 0 & 1 & 0 & 0 & 0 & 0\\ 0 & 1 & 0 & 1  & 0 & 1 & 0\\
 0 & 0 & 0 & 0 & 1 & 0 & 1\end{pmatrix}.
\]

We consider the same arrival rates as in Example (D):  $\lambda_1=5, \lambda_2=4, \lambda_3=6$.
The LP in this case is: 
Minimize $\rho$ subject to
\begin{equation}\label{500E}
\begin{cases}
&3\xi_{11}+4\xi_{12}=5,\\
&6\xi_{21}+8\xi_{22}+ 6\xi_{23}=4,\\
&8\xi_{32}+ 6\xi_{33}=6,\\
&\xi_{11}+\xi_{21}\le\rho,\\
&\xi_{12}+\xi_{22}+\xi_{32}\le\rho,\\
&\xi_{23}+\xi_{33}\le\rho,\\
&\min_{ik}\xi_{ik}\ge0.
\end{cases}
\end{equation}
The dual program takes the form: Maximize $5y_1+4y_2+6y_3$ subject to
\begin{equation}\label{401E}
\begin{cases}
& z_1+z_2+z_3=1,
\quad \min_kz_k\ge0,\\
&(3y_1,4y_1,6y_2,8y_2, 6y_2,8y_3,6y_3)\le(z_1,z_2,z_1,z_2,z_3,z_2,z_3).
\end{cases}
\end{equation}

As before, let
$$\xi=(\xi_{11},\xi_{12},\xi_{21},\xi_{22},\xi_{23},\xi_{32},\xi_{33}).$$
The optimal objective value is $\rho^*=1$, and there are multiple solutions to the LP, with the set $\slp$ having $3$ extreme points:
\[
\textstyle
\xi^{*,1}=(1,\frac{1}{2}, 0, \frac{1}{2},0,0,1)^T,
\qquad
\xi^{*,2}=(\frac{1}{3}, 1,\frac{2}{3},0,0,0,1)^T,
\qquad
\xi^{*,3}=(1, \frac{1}{2},0,0,\frac{2}{3},\frac{1}{2},\frac{1}{3})^T.
\]
It follows from the fact that 
the columns of $\begin{pmatrix} R\\G \end{pmatrix}$  corresponding to positive elements of $\xi$ are linearly independent
that these are all extreme points.
It can be seen that these are the only extreme points as follows.
Turn the constraints in \eqref{500E} into a set of $6$ equations by replacing the $3$ inequalities with equalities and replacing $\rho$ by $1$.
The set of non-negative solutions to this set of equations is precisely $\slp$.
It is easy to see that $\slp$ can be parametrized by $\xi_{11}$ and $\xi_{33}$: The five other $\xi_{ik}$ can be determined once 
$\xi_{11}$ and $\xi_{33}$ are known. Furthermore, we must have $\xi_{11} \in [1/3,1], \xi_{33} \in [1/3,1]$, and $\xi_{11}+\xi_{33} \ge 4/3$.
These $3$ constraints determine a triangle in the positive quadrant, where the extreme points correspond precisely to 
$\xi^{*,1}, \xi^{*,2}$, and $\xi^{*,3}$.

In this case the dual  has a unique solution (which corresponds to  $\eta =\frac{3}{10}$ in Example (D)):
$$z_1=\frac{3}{10}, \qquad z_2=\frac{4}{10}, \qquad  z_3=\frac{3}{10},$$
$$y_1=\frac{1}{10}, \qquad y_2=\frac{1}{20}, \qquad  y_3=\frac{1}{20}.$$
(Uniqueness can be established in a manner similar to Examples (A) and (B).)
The duality theorem again provides an easy verification that the above values of $\xi, y,z$ are optimal for their respective problems.

This system satisfies all three parts of Assumption \ref{ass:lp}, so that Proposition \ref{prop0} applies to it,
but it is not covered by  \cite{ourupperbound}, which is restricted to 2 class, 2 server systems.
It is also worth noting that modes 1 and 2 are both degenerate.


\skp

Whereas the examples provided above are concerned only
with first order data, the second order data determines the structure of the
WCP as well as the HJB equation. In \S \ref{sec43}, Examples~(A),
(C) and (E) above are
further developed, discussing the role of the second order data.

\section{The LP under the EHTC}\label{sec3}
\beginsec
The goal of this section is to prove Lemmas \ref{lem:primaldual}
and \ref{lem1}.
In the proof of Lemmas \ref{lem:primaldual} we use several standard results related to linear programs, such as complementary slackness and the duality theorem. 
We also use a slightly less standard result, commonly referred to `strict complementary slackness'.
For all of these we refer to appropriate results in \cite{schrijver}. For ease of reference all of our cited results are reproduced in the appendix.

\begin{proof}[Proof of Lemma \ref{lem:primaldual}.]

1. The set $\slp$ is the set of $\xi$ such that the constraints in \eqref{02} hold with $\rho=1$. 
By (9) in Section 7.2 of \cite{schrijver} (Definition \ref{def:con-poly}), this set is a convex polyhedron. We show that this polyhedron is bounded. 
Consider activity $j=(i,k)\in\calJ$. Since for any $\xi\in \slp$ we have $(G\xi)_k= 1$, for any $k\in \calK$, $0\leqslant \xi_j\leqslant 1$.  By Corollary 7.1c of \cite{schrijver} (Lemma \ref{lem:polytope}), this is enough to show that it is a polytope and thus finitely generated.

2.
We first rearrange \eqref{02} to a more standard form that enables the use of results from \cite{schrijver}. To that effect, let \begin{align*}
	\bar{x}&=(\rho,\xi_1,\ldots \xi_J)^T,\\
	\bar{c}&=(-1,0_J),\\
	\bar{b}'&=(\lambda,-\lambda,0_K,0_J),		
\end{align*}
and 
\begin{equation*}
	\bar{G}=\begin{pmatrix}
		
		0_I & R
		\\
		
		0_I & -R
		\\
		-1_K & G
		\\
		0_J & -I_J
	\end{pmatrix}.
\end{equation*}
Then \eqref{02} becomes:
\begin{equation}
	\text{maximize } \bar{c}\cdot\bar{x} \text{ subject to } \bar{G}\bar{x}\leqslant \bar{b}. \label{eq:primal}
\end{equation}
Then, directly,
\begin{equation*}
	\bar{c}\cdot\bar{x}=-\rho.
\end{equation*}
In addition, $\bar{G}\bar{x}\leqslant \bar{b}$ is equivalent to following the set of conditions:
\begin{align*}
	R\xi&\leqslant \lambda,\\
	-R\xi&\leqslant -\lambda,\\
	-\rho\cdot\mathds{1}+G\xi&\leqslant 0_K\\
	-	\xi&\leqslant 0_J.
\end{align*}
As a result, the LP \eqref{eq:primal}, with the above parameters $(\bar b,\bar c,\bar G)$,
is equivalent to our original LP \eqref{02}.
We now use Corollary 7.1g of \cite{schrijver} (Theorem \ref{th:duality}), according to which
the dual of \eqref{eq:primal} is given by
the LP
\begin{equation}	\text{Minimize } y\cdot\bar{b} \text{ subject to } y\bar{G}=\bar{c},\, y\geqslant 0,\label{eq:dual2}
\end{equation}
and the primal and the dual have the same value.
(It follows from Assumptions \ref{ass:lp} (1) and (3) that both sets in \eqref{eq:lpduality} are non-empty.)

Therefore result 2 will be proved once we show that the LP \eqref{eq:dual2}
is equivalent to our original dual, \eqref{eq:dualreformulation}.
To this end, write $y$ as $y=(y^1,y^2,z,s)$, where $y^1,y^2\in\R_+^I$,
$z\in\R_+^K$ and $s\in\R_+^J$.
Next, note that by plugging in the values for $y$ and $\bar{b}$, we obtain 
$$y\cdot\bar{b}=(y^1-y^2)\cdot\lambda.$$
This  equality means that minimizing $y\cdot\bar{b}$ is the same as maximizing $(y^2-y^1)\cdot\lambda$.
In addition, $y\bar{G}= \bar{c}$ is equivalent to the following set of conditions:
\begin{align*}
	\sum_k z_k&=1,\\
	(y^2-y^1)R&=zG-s.
\end{align*}
Set $\bar{y}=y^2-y^1$. Note that the sign of $\bar{y}$ is unknown. Since $s\geqslant 0$, we can replace the last equality by an inequality.
This is sufficient to conclude that \eqref{eq:dual2} and \eqref{eq:dualreformulation}
are equivalent.
This shows $y^*\cdot\lambda=\rho^*=1$.

3.
We appeal to result (36) in chapter 7 of \cite{schrijver} (Theorem  \ref{th:app}), commonly referred to as `strict complementary slackness'.  By Assumption \ref{ass:lp},
we have $(G\xi^*)_k= 1$ for all $k\in\calK$.
The dual variable corresponding to the constraint $(G\xi^*)_k\leqslant \rho$ is $z^*_k$.
Since(36) (i) of \cite{schrijver} (Theorem \ref{th:app} (a)) does not hold for any solution of the primal, (36) (ii) (Theorem \ref{th:app} (b)) must hold.
Thus $z_k^*>0$ for all $k$.

4.
In a manner similar to the rearrangement/translation in part 2 above, we can translate our primal and dual respectively 
into the minimization and maximization problem in \eqref{eq:lpduality}. 
(In this translation, denoting  the variable $y$ appearing in \eqref{eq:lpduality}  by $\tilde{y}$ to avoid confusion with our variable $y$ in \eqref{eq:dualreformulation},
we have $\tilde{y}=(\xi,\rho,\tilde{s})$, where $\tilde{s} \in \R^{K}_+$ corresponds to slack variables for the $K$ constraints
$G\xi\leqslant \rho 1_K$.)
Invoking (35)(iii)  of \cite{schrijver} (Lemma \ref{lem:comp-slack}), which is complementary slackness,
if there is a solution of \eqref{eq:primal} such that $\xi^*_j>0$, then 
$(y^*R)_j=(z^*G)_j$. In other words, 
\begin{equation}\label{c3}
	y^*_i\mu_{j}=z^*_k \quad \text{whenever $j=(i,k)\in\calA_{\rm PB}$.}
\end{equation}

5.
We use the maximization/minimization pair of part 4, and invoke (36) of  \cite{schrijver} (Theorem \ref{th:app}) again, this time to say that 
if $\xi^*_j=0$ for all solutions to \eqref{eq:primal} then 
the associated constraint 
$(y^*R)_j \leqslant (z^*G)_j$ has positive slack, so that
\begin{equation}\label{c4}
       y^*_i\mu_{j}<z^*_k \quad \text{whenever $j=(i,k)\in\calA_{\rm AN}$.}
\end{equation} 

6.
Assume  that $i$ is such that $\calJ_i\cap \calA_{\rm PB}=\emptyset$. Then for any $\xi \in \slp$, we obtain $(R\xi)_i=\sum_{j\in \calJ_i}\mu_j\xi_j=0$. Since $\lambda>0$ there is a contradiction with $(R\xi)_i = \lambda_i$ in \eqref{02}, so such an $i$ cannot exist.

7.
By  part 6, for every $i$, there exists a server $k$ such that $j=(i,k)\in \calA_{\rm PB}$. By part 4,
$$y^*_i=\dfrac{z^*_k}{\mu_{j}},$$
and by part 3 the last quantity is positive.
\end{proof}

\begin{proof}[Proof of Lemma \ref{lem1}.]

1. For any $\xi\in\slp$ and $i\in\calI$, 
\begin{align*}
	\dfrac{\lambda_i}{\al_i}&=\sum_{j\in \calJ_i} \dfrac{\mu_{j}}{\alpha_i}\xi_{j}\\
	&=\sum_{k, (i,k)\in \calJ}\beta_k\xi_{ik}
\end{align*}
Summing over $i$, we get
\begin{align*}
	\sum_i \dfrac{\lambda_i}{\al_i}&=\sum_i \sum_{k, (i,k)\in \calJ}\beta_k\xi_{ik}\\
	&=\sum_k\beta_k\sum_{j\in \calJ^k} \xi_j\\
	&=\sum_k\beta_k.
\end{align*}
The last equality is obtained using Assumption \ref{ass:lp}(2).
The claim follows from the convention $\sum_k\beta_k=1$ that we have imposed.

2. The choice $z^*=\beta$ obviously satisfies the condition of summing to 1. For any $j=(i,k)\in \calJ$, $(y^*R)_j=\dfrac{\mu_j}{\al_i}=\beta_k= (z^*G)_j$ . In addition, by part 1,  the optimal value for the primal is attained by this solution. Since both problems have the same value we have shown that 
the choice given for 
$(y^*,z^*)$ is a solution to the dual. The solution is unique by Assumption \ref{ass:lp}.

3. Let $\xi^*$ be such that for any $j\in \calJ_i$, $\xi^*_{j}=\frac{\lambda_i}{\al_i}$. One can check that when $\calJ=\calI\times\calK$, $\xi^*\in\slp$. Indeed, for any $i$, 
\[(R\xi^*)_i=\sum_{j\in \calJ_i}\mu_j\xi^*_j=\sum_{k, (i,k)\in \calJ}\dfrac{\lambda_i}{\alpha_i}\alpha_i\beta_k=\lambda_i\sum_{k}\beta_k=\la_i,\]
where the last equality comes from the definition of $\beta$. Similarly,
\[(G\xi^*)_k=\sum_{j\in \calJ^k}\xi^*_j=\sum_{i} \dfrac{\lambda_i}{\alpha_i}=1,\]
where the last equality comes from part 1.

Notice that all the components of $\xi^*$ are positive. We will now perturb this solution to get another element of $\slp$. Let $\delta\in \R^J$ such that $\delta_{ik}=\frac{(-1)^{i+k}}{\beta_k}$ if $i$ and $k$ are no larger than $2$
and $\delta_{(i,k)}=0$ otherwise. For $\eps>0$ small enough, $\xi^*+\eps\delta\in \R_+^J$. 
Server 1 diverts a fraction $\frac{\eps}{\beta_1}$ of its effort from class 2 to class 1, server 2 diverts a fraction $\frac{\eps}{\beta_2}$ of its effort from class 1 to class 2. 
We thus have $R\delta=G\delta=0$, so that this solution satisfies the constraints of the LP.  To see this, note that 
due to decomposability, we have
\begin{align*}
	(R\delta)_1&=\dfrac{\mu_{11}}{\beta_1}-\dfrac{\mu_{12}}{\beta_2}=\alpha_1-\alpha_1=0,\\
	(G\delta)_1&=\dfrac{1}{\beta_1}-\dfrac{1}{\beta_1}=0.
\end{align*}
The second coordinate is obtained in the same way. The other coordinates are also zero because $\delta_{ik}=0$ if $i$ or $k$ is neither 1 or 2.
This proves $\xi^*+\eps\delta\in\slp$ which proves decomposability is sufficient for multiplicity. 

4. We have already shown that decomposability is sufficient for multiplicity in general, we now prove the necessary part, that is when $I=K=2$, having multiple solution can only happen if $\calJ=\calI\times\calK$. We identify $\calI$ and $\calJ$ with $\lbrace 1,2\rbrace$. Let us have two distinct solutions $\xi^{(p)}$,
$p=1,2$. By Assumption \eqref{ass:lp}, $\sum_{j\in \calJ^k}\xi^{(p)}_{j}=1$ holds for $k=1,2$ and $p=1,2$.
Next, define $\del_{ik}=\xi^{(1)}_{ik}-\xi^{(2)}_{ik} $. Then, since both $\xi^{(1)}$ and $\xi^{(2)}$ are solutions to \eqref{02}, 
\begin{align}\label{002}
	&
	\del_{11}\mu_{11}+\del_{12}\mu_{12}=0,
	\\& \notag
	\del_{21}\mu_{21}+\del_{22}\mu_{22}=0,
	\\& \notag
	\del_{11}+\del_{21}=0,
	\\& \notag
	\del_{12}+\del_{22}=0.
\end{align}
The last three equations imply
\begin{equation}\label{003}
	\del_{11}\mu_{21}+\del_{12}\mu_{22}=0.
\end{equation}
Since the solutions are distinct, one of the four entries of $\del$ is nonzero,
and it follows that all four entries of $\del$ are nonzero (this uses nonvanishing of $\mu_{ik}$).
By \eqref{002} and \eqref{003}
we have
\begin{equation}\label{02:}
	\frac{\mu_{11}}{\mu_{12}}=\frac{\mu_{21}}{\mu_{22}}.
\end{equation}
Hence $(\mu_{21},\mu_{22})=c(\mu_{11},\mu_{12})$
for some constant $c>0$. This gives the decomposability.

Showing that the number of modes is at most $2$ amounts to showing
that when the LP has multiple solutions, $\slp$ is a line segment.
Using the four equations in \eqref{002} along with \eqref{02:},
it is not hard to show that whenever $\xi$ and $\eta$ are in $\slp$,
one has $\xi-\eta\in{\rm span}(\del^0)$, where $\del^0$ is the vector
$(\delta_{(i,k)}^0)_{(i,k)\in\calJ}=(\dfrac{\mu_{i1}}{\mu_{ik}}(-1)^{i+k})_{(i,k)\in\calJ}$.
Since $\slp$ is convex and bounded,
this shows that it forms a line segment,
and proves the claim.
\end{proof}

\section{The BCP, WCP and HJB equation}\label{sec4}
\beginsec

The main goal of this section is to derive the BCP and WCP
describing the asymptotics of the model, and construct
an optimal solution to the latter via the HJB equation.
By way of doing this, we introduce rescaled processes
and develop relations among them,
that are used in the remainder of the paper.
The derivation of the BCP and WCP appears in \S\ref{sec41}.
\S\ref{sec42} states and proves Lemma \ref{lem3}, which constructs a candidate
optimal solution to the WCP.
\S \ref{sec43} proves Proposition \ref{prop0} and affirms,
in Corollary \ref{cor1}, that the aforementioned candidate is optimal. It also addresses,
in Proposition \ref{prop2}, the special case where there is one mode for which
both coefficients $b$ and $\sigma$ are smallest,
showing that it is optimal to always use this mode.

\subsection{Derivation of the BCP and WCP}\label{sec41}

Two control problems formulated in terms of diffusion processes,
referred to as the BCP and the WCP, are presented here, aimed
at formally describing the asymptotics of the queue length
and of the workload, respectively. The WCP has the advantage
of lying in dimension 1, and it is this problem that plays a central role
in this paper.
In earlier work on the PSS model, the BCP was obtained from
the prelimit problem and then transformed into a WCP.
In this paper, the connection between the prelimit and the WCP
is made directly, and therefore the BCP itself has no use in the proofs.
It is presented for the sole purpose of comparison to the form
it took in earlier work.
However, there will be reference to some of the notation
introduced and equations obtained along the way to deriving it.

\subsubsection{BCP derivation}\label{sec411}
Following standard convention, rescaled processes at the diffusion scale are
denoted with hat.
In addition to the ones already defined, $\hat X^n$ and $\hat W^n$,
let
\begin{align}\label{004}
	\hat A^n_i(t) &= n^{-1/2}(A^n_i(t)-\la^n_it),
	\\ \notag
	\hat S^n_{j}(t) &= n^{-1/2}(S^n_{j}-\mu^n_{j}t),
	\\\hat I^n_k(t)&=n^{1/2}I^n_k(t),
	\\\hat L^n(t)&=z^*\cdot \hat I^n(t).
	\notag
\end{align}
By \eqref{40-} and the balance equation \eqref{40},
\begin{align}\label{eq:balancex}
	\hat X^n_i(t) &=\hat A^n_i(t)+n^{-1/2}\la^n_it
	-\sum_{j\in \calJ_i}\hat S^n_{j}(T^n_{j})
	-\sum_{j\in \calJ_i}n^{-1/2}\mu^n_{j}T^n_{j}(t).
\end{align}
We next introduce a notion of which no rigorous use will be made
in this paper,
serving only for the derivation of
the BCP: A {\it fluid control process} is
a process $\bar T^n$ that takes the form
$\bar T^n_{j}=\int_0^\cdot\bar\X^n_{j}(s)ds$, where $\bar\X^n$ is some  process
taking values in $\slp$.
Given any fluid control process $\bar T^n$ we can perform the centering
\[
\hat Y^n_{j}(t)=-n^{1/2}(T^n_{j}(t)-\bar T^n_{j}(t)).
\]
Then, by \eqref{eq:balancex} and the definition of $\hat\la^n$, $\hat\mu^n$ one has
\begin{align*}
	\hat X^n_i(t)&=\hat A^n_i(t)-\sum_{j\in \calJ_i}\hat S^n_{j}(T^n_{j}(t))
	+\sum_{j\in \calJ_i}\mu_{j}\hat Y^n_{j}(t)
	\\&
	\qquad
	-\sum_{j\in \calJ_i}\hat\mu^n_{j} T^n_{j}(t)-\sum_{j\in \calJ_i}n^{1/2}\mu_{j}\bar T^n_{j}(t)+n^{-1/2}\la^n_it.
\end{align*}
The sum of the last two terms above is equal to
\begin{align*}
	\int_0^t n^{-1/2}\Big(\la^n_i-n^{1/2}\sum_{j\in \calJ_i}\mu_{j}\bar\X^n_{j}(s)\Big)ds
	&=n^{1/2}\int_0^t\Big(\la_i-\sum_{j\in \calJ_i}\mu_{j}\bar\X^n_{j}(s)\Big)ds
	+\hat\la^n_it\\
	&=\hat\la^n_it,
\end{align*}
where the last equality follows from the fact that
$\bar\X^n(t)$ takes values in $\slp$.
Thus
\begin{equation}\label{04}
	\hat X^n_i(t)=\hat A^n_i(t)-\sum_{j\in \calJ_i}\hat S^n_{j}(T^n_{j}(t))+\hat{\lambda}^n_i t-\sum_{j\in \calJ_i}\hat{\mu}^n_{j}T^n_{j}(t)+\sum_{j\in \calJ_i}\mu_{j} \hat Y^n_{j}(t).
\end{equation}
Next,
by \eqref{41} and the identity $\sum_{j\in \calJ^k}\bar\X^n_{j}(t)=1$,
\begin{equation}\label{05}
	\hat I^n_k(t)=n^{1/2}\sum_{j\in \calJ^k}(\bar T^n_{j}(t)-T^n_{j}(t))
	=\sum_{j\in \calJ^k}\hat Y^n_{j}(t).
\end{equation}
By the central limit theorem for renewal processes
\cite[\S 17]{bill}, the processes $(\hat A^n_i,\hat S^n_{j})$
converge to $(A_i,S_{j})$,
a collection of $I+J$ mutually independent BMs starting from zero,
with zero drift and diffusion coefficients given by the constants $\la_i^{1/2}C_{A_i}$
and $\mu_{j}^{1/2}C_{S_{j}}$, which we have earlier denoted by $\sig_{A,i}$
and $\sig_{S,j}$, respectively.
Assuming, without any justification as this point, that
$(\hat A^n,\hat S^n, \hat X^n, T^n, \bar T^n, \hat Y^n,\hat I^n)$
weakly converge to $(A,S,X,T,T,Y,I)$,
we have from \eqref{04} and \eqref{05}
\begin{align}\label{06}
	X_i(t)
	&=A_i(t)-\sum_{j\in \calJ_i}S_{j}(T_{j}(t))
	+\hat\la_it-\sum_{j\in \calJ_i}\hat\mu_{j}T_{j}(t)+\sum_{j\in \calJ_i}\mu_{j}Y_{j}(t),
	\\
	\label{07}
	I_k(t)
	&=\sum_{j\in \calJ^k}Y_{j}(t).
\end{align}
These processes satisfy the constraints
\begin{align}\label{08}
	&
	X_i(t)\geqslant0 \text{ for all $t$},\\
	&
	\text{$I_k$ has sample paths in $C^{0,+}_\R[0,\iy)$},\\
	&
	\text{if $T_{j}$ is flat on some interval $(s,t)$
		then $Y_{j}$ is non-increasing on it}.
	\label{08+}
\end{align}
The BCP consists of minimizing the cost
\[
\E\int_0^\iy e^{-\gamma t}h(X(t))dt,
\]
over all tuples $(T,Y, X,I)$ satisfying \eqref{06}--\eqref{08+}.
A precise definition must also account for
adaptedness w.r.t.\ a filtration, as in the definition of a WCP
given in \S\ref{sec:wcpp};
however, since this problem is presented here only for motivational
purposes, we will not formulate it in full detail.

There are similarities between this BCP and those that appear in
\cite{harlop}, \cite{bw1}, \cite{bw2}. An important difference
is the appearance of the process $T$ as a control process,
as opposed to it being deterministic in the above references.
This has significant consequences already mentioned in the introduction.

\subsubsection{WCP derivation}\label{sec:wcp}

We next show how the WCP, which was introduced in \S\ref{sec:wcpp}, is obtained
directly from the queueing model, and then discuss the role it plays
in this paper.
Let the process that appears in the definition of the cost \eqref{42} be denoted by $\hat H^n$, where
\begin{equation}\label{143}
	\hat H^n_t=h(\hat X^n_t)=h\cdot\hat X^n_t.
\end{equation}
Using \eqref{eq:balancex} and the definitions of 
$\hat \lambda^n, \hat \mu^n$, a straightforward calculation
gives, with $\iota(t)=t$,
\begin{equation}\label{c5}
	\hat W^n=\hat F^n+n^{1/2}\sum_iy^*_i\la_i\iota-n^{1/2}\sum_iy^*_i\sum_{j\in \calJ_i}\mu_{j}T^n_{j},
\end{equation}
where
\begin{equation}
	\label{096} \hat F^n=\sum_iy^*_i(\hat A^n_i-\sum_{j\in \calJ_i}\hat S^n_{j}\circ T^n_{j}
	+\hat\la^n_i\iota-\sum_{j\in \calJ_i}\hat\mu^n_{j}T^n_{j}).
\end{equation}
The analysis of the problem via workload considerations is based
on two key inequalities. The first, which uses the positivity of $y^*$
and the definition of $q$ in \eqref{145}, is the following:
\begin{equation}\label{144}
	\hat H^n_t=\sum_ih_iy^*_i(y^*_i)^{-1}\hat X^n_i(t)\ge h_q(y^*_q)^{-1}\hat W^n_t,
	\thickspace
	\text{with equality when}\thickspace \sum_{i\neq q}\hat X^n_i(t)=0.
\end{equation}
The second key inequality is given by the following lemma. To state it we first
introduce the {\it Skorohod map on the half line},
denoted throughout this paper by
$\Gam:D_\R[0,\iy)\to D_{\R_+}[0,\iy)\times D_\R^+[0,\iy)$. It
takes a function $\psi$ to a pair $(\ph,\eta)$, where
\[
	\ph(t)=\psi(t)+\eta(t),\qquad \eta(t)=\sup_{0\les s\les t}\psi(s)^-,
	\qquad t\ges0.
\]
The corresponding maps $\psi\mapsto\ph$ and $\psi\mapsto\eta$ are denoted
by $\Gam_1$ and $\Gam_2$, respectively.
\begin{lemma}\label{lem5}
One has
\begin{equation}\label{097}
	\hat W^n_t\geqslant \Gam_1[\hat F^n](t),
	\qquad t\ge0.
\end{equation}
\end{lemma}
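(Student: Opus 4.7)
The plan is to decompose $\hat W^n$ as $\hat F^n$ plus a pushing term that lies in $C^{0,+}_\R[0,\iy)$, and then invoke minimality of the Skorohod regulator. From \eqref{c5},
\begin{equation*}
\hat W^n(t) = \hat F^n(t) + \hat M^n(t), \qquad \hat M^n(t) := n^{1/2}\Bigl(t - \sum_i y^*_i\sum_{j\in \calJ_i}\mu_{j} T^n_{j}(t)\Bigr).
\end{equation*}

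First I would verify that $\hat M^n \in C^{0,+}_\R[0,\iy)$. Nullity at $0$ is immediate. For monotonicity, the key input is the bound $y^*_i\mu_{j} \les z^*_k$ valid for every activity $j = (i,k) \in \calJ$, which holds with equality on $\calA_{\rm PB}$ by Lemma \ref{lem:primaldual}(4) and as a strict inequality on $\calA_{\rm AN}$ by Lemma \ref{lem:primaldual}(5). Combined with $\sum_k z^*_k = 1$ and the interference identity \eqref{41}, this yields, for $0 \les s \les t$,
\begin{equation*}
\sum_i y^*_i\sum_{j\in \calJ_i}\mu_{j}(T^n_{j}(t) - T^n_{j}(s)) \les \sum_k z^*_k[(GT^n)_k(t) - (GT^n)_k(s)] \les (t - s)\sum_k z^*_k = t - s,
\end{equation*}
the second inequality being a consequence of monotonicity of $I^n_k$. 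Hence $\hat M^n(t) - \hat M^n(s) \ges 0$.

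Next, I would observe that $\hat W^n \ges 0$ pointwise, since $\hat X^n \ges 0$ (by admissibility and \eqref{41+}) and $y^* > 0$ by Lemma \ref{lem:primaldual}(7). Consequently, for every $s \in [0,t]$, combining $\hat M^n(s) \ges 0$ with $\hat M^n(s) \ges -\hat F^n(s)$ (which follows from $\hat W^n(s) \ges 0$) yields $\hat M^n(s) \ges \hat F^n(s)^-$. Monotonicity of $\hat M^n$ then gives $\hat M^n(t) \ges \hat M^n(s) \ges \hat F^n(s)^-$ for every $s \les t$, so $\hat M^n(t) \ges \sup_{s \les t}\hat F^n(s)^- = \Gam_2[\hat F^n](t)$; adding $\hat F^n(t)$ to both sides yields \eqref{097}.

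The only nontrivial step is the monotonicity of $\hat M^n$, which synthesizes the dual LP inequality of Lemma \ref{lem:primaldual}(4)--(5) with the interference constraint \eqref{41}; the remaining two steps are routine consequences of admissibility and the defining properties of the Skorohod map.
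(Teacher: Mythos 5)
Your proof is correct and follows essentially the same route as the paper's: both exploit the dual-LP inequalities $y^*_i\mu_{j}\les z^*_k$ from Lemma \ref{lem:primaldual}(4)--(5), the idleness relation \eqref{41}, and $\sum_k z^*_k=1$ to exhibit $\hat W^n-\hat F^n$ as a nonnegative nondecreasing process, then invoke the minimality (equivalently, the explicit formula) of the Skorohod regulator. The only cosmetic difference is that you bundle the pushing term into a single $\hat M^n$, while the paper decomposes it as $\hat L^n+\hat L^n_{\rm AN}$ and checks each piece separately---a split the paper later exploits when discussing tightness of the lower bound, but which is not needed for the inequality itself.
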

The proof appears at the end of this section.

Inequalities \eqref{144} and \eqref{097} allow us to bound the cost
$\hat J^n$ defined in \eqref{42} as follows:
\begin{equation}\label{094}
	\hat J^n(T^n)=\E\int_0^\iy e^{-\gamma t}\hat H^n_tdt
	\geqslant
	h_q(y^*_q)^{-1}\E\int_0^\iy e^{-\gamma t}\Gam_1[\hat F^n](t)dt,
	\qquad
	T^n\in\calA^n.
\end{equation}
The derivation that follows is concerned with the asymptotics
of the RHS of the above inequality.

We take formal limits in \eqref{c5}--\eqref{096}.
If we assume that $(\hat A^n,\hat S^n,T^n)\To(A,S,T)$
then by \eqref{096}, $\hat F^n\To F$ defined by
\begin{equation}\label{eq:wtilde}
	F(t)=\sum_iy^*_i\Big(A_i(t)+\hat\la_it
	-\sum_{j\in \calJ_i}S_{j}(T_{j}(t))+\hat\mu_{j}T_{j}(t)\Big).
\end{equation}
Assuming further that $T$ is of the form $\int_0^\cdot\X(t)dt$ where
$\X(t)=(\X_{j}(t))$ takes values in $\slp$,
using the definition \eqref{50} of $b$ and \eqref{eq:wtilde}, we have
\begin{align}
	\label{53}
	Z_t:=\Gam_1[F](t)
	&=\int_0^t\sum_iy^*_i\Big(\hat\la_i-\sum_{j\in \calJ_i}\hat\mu_{j}\X_{j}(s)\Big)ds
	+M_t+L_t
	\\
	\notag
	&=\int_0^t b(\X_s)ds+M_t+L_t,
\end{align}
where $Z_t\geqslant0$, $L$ has sample paths in $C^{0,+}_\R[0,\iy)$,
$\int_{[0,\iy)}Z_tdL_t=0$, and
\begin{equation}\label{166}
	M_t=\sum_iy^*_i\Big(A_i(t)-\sum_{j\in \calJ_i}S_{j}(T_{j}(t))\Big).
\end{equation}
If one can further show that $A_i$ and $S_{j}\circ T_{j} $ are all
martingales on a common filtration, and moreover,
$\langle S_{j}\circ T_{j}\rangle_t=\langle S_{j}\rangle_{T_{j}(t)}$
then $M_t$ is a martingale, and
$$
\langle M\rangle_t=\int_{0}^{t}\sum_i(y^*_i)^2\Big(\sig_{A,i}^2
+\sum_{j\in \calJ_i}\sig_{S,j}^2\X_{j}(s)\Big)ds=\int_{0}^{t}\sigma^2(\X(s))ds,
$$
where the definition \eqref{50} of $\sig$ is used.
Hence
\[
Z_t=\int_0^t b(\X_s)ds+\int_0^t\sigma(\X_t)dB_t+L_t,
\]
for $B$ a SBM. This is precisely \eqref{15} with initial condition $z=0$.
(Although the case $z=0$ reflects our assumption $\hat X^n(0)=0$,
the formulation of \eqref{15} with a general initial condition $z\ge0$
will be important when we appeal to the HJB equation.)
Moreover, because by construction $Z=\Gam_1[F]$,
the cost $J_{\rm WCP}$ is formally the limit of the expected integral
on the RHS of \eqref{094}.
This suggests that the value function for the QCP is asymptotically
bounded below by $h_q(y^*_q)^{-1}V_{\rm WCP}(0)$,
which is the main result of this paper.
Indeed, \eqref{094} is the starting point for its proof in \S\ref{sec5}.

From the above discussion it also follows that in order
for the lower bound to be tight, that is, for
there to exist a QCP policy that asymptotically achieves it,
the two inequalities \eqref{144}, \eqref{097} should asymptotically be achieved
as equalities, and the RHS of \eqref{094}
should be asymptotic to the optimal WCP solution.
For \eqref{144} to hold as near equality,
one must keep $\hat X^n_i$, $i\ne q$ close to zero.
As will become clear from the proof below of Lemma \ref{lem5},
for \eqref{097} to hold as equality up to a negligible term,
the process $\hat L^n$ (more precisely, the sum $\hat L^n+\hat L^n_{\rm AN}$,
where the latter is defined in the proof of Lemma \ref{lem5}), should be kept close
to $\Gam_2[\hat F^n]$. 
For $\hat L^n$  
this entails that, up to negligible terms,
none of the servers should accumulate idleness when there is work in the system, and
for $\hat L^n_{\rm AN}$  this entails that, up to negligible terms, no server should engage in any activity 
$j \in \calA_{\rm AN}$, that is, always non-basic, except possibly when the workload is close to zero.
For PSS satisfying the HTC 
these two points were recognized in \cite{har98}, who showed that, if sufficient care is not taken in keeping the components
$\hat X^n_i$, $i\ne q$ close to zero, the system could be overwhelmingly overloaded,
as well as in \cite{bw1}, who have constructed a threshold policy
that succeeds in achieving the two goals.
When the HTC does not hold there are additional considerations involving
the third aspect mentioned above. Namely, for the RHS of \eqref{094}
to be asymptotic to the WCP solution, certain switching control policies
need to be implemented. This is the subject of \cite{ourupperbound}.

\begin{proof}[Proof of Lemma \ref{lem5}.]
Any activity involves exactly one class and one server. For that reason, the following three operations are equivalent and can be used interchangably, depending on what index is needed in the summation: $\sum_i\sum_{j\in \calJ_i}$, $\sum_k\sum_{j\in \calJ^k}$, $\sum_i\sum_k\mathds1_{(i,k)\in \calJ}$. By Lemma \ref{lem:primaldual} part 4, the last term in \eqref{c5} can be modified using the following identity
\begin{align}\label{c2}\notag
	\sum_iy^*_i\sum_{j\in \calJ_i}\mu_{j}T^n_{j}
	&=\sum_i\sum_k\mathds1_{(i,k)\in \calJ\cap\calA_{\rm PB}}y^*_i\mu_{ik}T^n_{ik}+\sum_i\sum_k\mathds1_{(i,k)\in \calJ\cap\calA_{\rm AN}}y^*_i\mu_{ik}T^n_{ik}\\
	&=\sum_k\sum_{j\in \calJ^k\cap\calA_{\rm PB}}z^*_kT^n_{j}\notag
	+\sum_k\sum_{i}\mathds{1}_{(i,k)\in \calJ\cap\calA_{\rm AN}}y^*_i\mu_{ik}T^n_{ik}\\
	&=\sum_kz^*_k\sum_{j\in \calJ^k}T^n_{j}\notag
	-\sum_k\sum_{i}\mathds{1}_{(i,k)\in \calJ\cap\calA_{\rm AN}}(z^*_k-y^*_i\mu_{ik})T^n_{ik}\\
	&= \sum_kz^*_k\sum_{j\in \calJ^k}T^n_{j} - L^n_{\rm AN},
\end{align}
where the last equality defines the process
$L^n_{\rm AN}$.

In view of Lemma \ref{lem:primaldual} part 5 and the fact that
$T^n_{ik}$ are nonnegative and nonndecreasing, so is $L^n_{\rm AN}$.
Denote $\hat L^n_{\rm AN}=n^{1/2}L^n_{\rm AN}$.
By \eqref{c5}, \eqref{c2} and Lemma \ref{lem:primaldual} part 2,
\begin{equation*}
	\hat W^n=\hat F^n+n^{1/2}\iota-n^{1/2}\sum_kz^*_k\sum_{j\in \calJ^k}T^n_{j} + \hat L^n_{\rm AN}.
\end{equation*}
By \eqref{eq:dualreformulation}, $\sum_k z^*_k=1$,
and by \eqref{41}, $\hat I^n_k+n^{1/2}\sum_{j\in \calJ^k}T^n_{j}=n^{1/2}\iota$.
Hence
\begin{align*}
\hat W^n&=\hat F^n+n^{1/2}\iota-\sum_kz^*_k(n^{1/2}\iota-\hat I^n_k)
+\hat L^n_{\rm AN}
	\\
	&= \hat F^n+z^*\cdot\hat I^n+\hat L^n_{\rm AN}\\
	&= \hat F^n+ \hat L^n+\hat L^n_{\rm AN}.
\end{align*}

It is well known that the Skorohod map $\Gam_1$ possesses the following {\it minimality property}
(see \cite[Section 2]{cheman}).
Let $\psi\in D_\R[0,\iy)$ and assume that
$\ph=\psi+\eta$ where $\eta\in D^+_{\R}[0,\iy)$ and
$\ph(t) \ge 0$  for all $t$. Then $\ph(t)\ges\Gam_1[\psi](t)$ for all $t$.

Because the process $\hat W^n$ has non-negative sample paths and
$\hat L^n$ and $\hat L^n_{\rm AN}$
have sample paths in $D^+_\R[0,\iy)$, it follows from the minimality
property that $\hat W^n_t\ge\Gam_1[\hat F^n](t)$ for all~$t$.
This completes the proof.
\end{proof}

\begin{remark}\label{rem-lan}
We now provide an intuitive explanation for  $\hat L^n_{\rm AN}$.
The process $\hat L^n_{\rm AN}$ involves a sum over the always non-basic activities.
Recall that Lemma \ref{lem:primaldual} part 5 states that, for any activity $j=(i,k) \in \calA_{\rm AN}, y^*_i\mu_{ik} < z^*_k$,
while Lemma \ref{lem:primaldual} part 4 states that for any activity $j=(i,k) \in \calA_{\rm PB}, y^*_i\mu_{ik} = z^*_k$.
Intuitively, with $y^*_i$ interpreted as the average work of a class $i$ customer, $y^*_i\mu_{ik}$ represents the rate at which activity $j$ removes class $i$ work from the system.
Since each server $k$ has at least one activity $j'=(i',k) \in  \calA_{\rm PB}$, for that activity $y^*_{i'}\mu_{i'k}=z^*_k > y^*_i\mu_{ik}$,
so that activity $j'$ can be viewed as more `efficient' than activity $j$.
Thus $\hat L^n_{\rm AN}(t)$ is the additional amount of work that could have been removed from the system by time $t$ if the effort expended on activities in $\calA_{\rm AN}$ had instead been used on activities in $\calA_{\rm PB}$.
\end{remark}

\subsection{Candidate solution to the WCP}\label{sec42}

In the following result, under the assumption
of existence of a classical solution to the HJB equation, a candidate solution
to the WCP is constructed. This result is later used to prove Proposition~\ref{prop0}
which states that a classical solution exists,
and Corollary \ref{cor1} which verifies the optimality of this candidate.

\begin{lemma}\label{lem3}
	\begin{enumerate}
		\item
		For any measurable map $\xi^0:\R_+\to\slp$ and $z\geqslant0$,
		there exists a weak solution to the SDE
		\begin{equation}\label{54}
			Z_t=z+\int_0^tb(\xi^0(Z_s))ds+\int_0^t\sig(\xi^0(Z_s))dB_s+L_t,
		\end{equation}
		where $Z_t\geqslant0$, $L$ has sample paths in $C^{0,+}_\R[0,\iy)$,
		and $\int_{[0,\iy)}Z_tdL_t=0$.
		\item
		Assume that a classical solution $u$ to \eqref{14}--\eqref{14+} exists.
		Then letting
		\begin{align}\label{167}
			\xi^u(z)&=\sum_{m\leqslant M}\one_{A^m}(z)\xi^{*,m}, \quad\text{where}
			\\
			\notag
			A^m&=A^m_u=\{z\in\R_+:\BH(u'(z),u''(z))=\bar\BH(u'(z),u''(z),\xi^{*,m})\}\setminus \cup_{m'<m}A^{m'},
		\end{align}
		there exists a weak solution to \eqref{54} in the special case
		$\xi^0=\xi^u$.
	\end{enumerate}
	
\end{lemma}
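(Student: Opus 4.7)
The plan is to reduce part 1 to a classical existence problem for a one-dimensional reflected SDE with bounded, uniformly elliptic, measurable coefficients, and to deduce part 2 as a special case. First, since $b$ and $\sig^2$ are affine functions of $\xi$ and $\slp$ is a compact convex polytope, the compositions $\tilde b(z):=b(\xi^0(z))$ and $\tilde\sig(z):=\sig(\xi^0(z))$ are bounded Borel measurable functions on $\R_+$. Moreover, from the expression $\sig^2(\xi)=\sum_i(y^*_i)^2(\sig_{A,i}^2+\sum_{j\in\calJ_i}\sig_{S,j}^2\xi_{j})$ and the positivity $y^*_i>0$ from Lemma~\ref{lem:primaldual}(7), one has $\sig^2(\xi)\ges\sum_i(y^*_i)^2\sig_{A,i}^2>0$ for every $\xi\in\slp$. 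Hence there exist constants $0<\sig_{\min}\les\sig_{\max}<\iy$ with $\sig_{\min}\les\tilde\sig(z)\les\sig_{\max}$ on $\R_+$, and part 1 reduces to producing a weak solution of the one-dimensional reflected SDE on $\R_+$ with bounded measurable drift $\tilde b$ and uniformly nondegenerate bounded measurable diffusion coefficient $\tilde\sig$.

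I would construct such a solution by regularization. Let $\rho^n$ be standard mollifiers on $\R$, extend $\tilde b,\tilde\sig$ to $\R$ by even reflection across $0$, and set $\tilde b^n=\rho^n*\tilde b$, $\tilde\sig^n=\rho^n*\tilde\sig$. These coefficients are smooth and bounded, and $\tilde\sig^n\ges\sig_{\min}$. On a single filtered probability space supporting a SBM $B$, and with initial condition $z$, classical theory for reflected SDEs with Lipschitz coefficients on the half-line gives strong solutions $(Z^n,L^n)$ to the regularized equation. Uniform bounds on $\tilde b^n,\tilde\sig^n$, the Burkholder--Davis--Gundy inequality, and the Lipschitz property of the Skorohod map $\Gam$ yield moment and modulus-of-continuity estimates on $(Z^n,L^n)$ that are uniform in $n$; thus $(Z^n,L^n,B)$ is $C$-tight in $(C_\R[0,\iy))^3$ and admits a weakly convergent subsequence with limit $(Z,L,B)$.

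The main obstacle is identifying this limit as a weak solution of \eqref{54} with the original, possibly discontinuous, coefficients, since $\tilde b^n\to\tilde b$ and $\tilde\sig^n\to\tilde\sig$ only Lebesgue-a.e.\ and not pointwise. The crucial ingredient is a Krylov-type occupation density estimate for uniformly nondegenerate one-dimensional diffusions, which, applied uniformly in $n$ to $Z^n$, yields bounds of the form $\E\int_0^t f(Z^n_s)\,ds\les C_t\|f\|_{L^1(\R_+)}$ for all nonnegative Borel $f$. Such an estimate transfers to the limit $Z$ and, combined with the a.e.\ convergence of the mollified coefficients and the Skorohod representation, enables the passages $\int_0^t\tilde b^n(Z^n_s)\,ds\to\int_0^t\tilde b(Z_s)\,ds$ and $\int_0^t\tilde\sig^n(Z^n_s)^2\,ds\to\int_0^t\tilde\sig(Z_s)^2\,ds$ in probability. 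A martingale characterization of stochastic integrals then identifies the limiting martingale term as $\int_0^\cdot\tilde\sig(Z_s)\,dB_s$ on the natural augmented filtration, while continuity of $\Gam$ ensures that $L=\Gam_2[z+\int_0^\cdot\tilde b(Z_s)ds+\int_0^\cdot\tilde\sig(Z_s)dB_s]$, that $Z\ges 0$, and that $\int_{[0,\iy)}Z_t\,dL_t=0$.

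For part 2, since $u\in C^2(\R_+;\R)$ the functions $u'$ and $u''$ are continuous, hence so is $z\mapsto\bar\BH(u'(z),u''(z),\xi^{*,m})$ for each mode $m$. The sets $A^m$ in \eqref{167} are obtained by taking the pointwise argmin over the finite family $\{\xi^{*,m}\}_{m=1}^M$ with ties broken by index, and are therefore Borel; consequently $\xi^u:\R_+\to\slp$ is Borel measurable. Part 2 then follows immediately by applying part 1 with $\xi^0=\xi^u$.
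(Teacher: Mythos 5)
Your proposal is correct in outline but takes a genuinely different and considerably more laborious route than the paper.

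The paper's proof is a two-line construction: it sets $\tilde b(x)=\sgn(x)b(\xi^0(|x|))$, $\tilde\sig(x)=\sgn(x)\sig(\xi^0(|x|))$ on all of $\R$ (odd extension, not even), invokes the standard weak-existence theorem for one-dimensional SDEs with bounded measurable drift and bounded, uniformly nondegenerate measurable diffusion (Karatzas--Shreve, Th.\ 5.5.15 and Rem.\ 5.5.19) to get a solution $Y$ of the unreflected equation on $\R$, and then takes $Z=|Y|$. Tanaka's formula yields the reflection term $L^0$ (the symmetric local time), and because $\sgn(Y_s)\tilde b(Y_s)=\sgn(Y_s)^2 b(\xi^0(|Y_s|))=b(\xi^0(|Y_s|))$ (and likewise for $\sig$), the folded process satisfies \eqref{54} on the nose. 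Part~2 (Borel measurability of the $A^m$) is handled by both you and the paper in the same way. The virtue of the paper's approach is that it outsources all of the difficulty with discontinuous coefficients to an existing existence theorem and requires no approximation or limit passage at all.

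Your mollification--Krylov scheme can be made to work, but as written it glosses over the central difficulty. In the passage
\[
\int_0^t\tilde\sig^n(Z^n_s)^2\,ds\to\int_0^t\tilde\sig(Z_s)^2\,ds,
\]
the quantity $\tilde\sig(Z^n_s)$ does \emph{not} converge to $\tilde\sig(Z_s)$ merely because $Z^n\to Z$ and $\tilde\sig^n\to\tilde\sig$ a.e.: $\tilde\sig$ is only measurable, so $Z^n_s\to Z_s$ gives no pointwise information about $\tilde\sig(Z^n_s)$. The Krylov estimate you cite is the right tool, but it has to be used twice in a three-term splitting: pick a continuous $\sig_m\to\tilde\sig$ in $L^1_{\rm loc}$, bound $\E\int_0^t|\tilde\sig^n(Z^n_s)^2-\sig_m(Z^n_s)^2|\,ds$ and $\E\int_0^t|\sig_m(Z_s)^2-\tilde\sig(Z_s)^2|\,ds$ by $\|\cdot\|_{L^1}$-smallness via Krylov (uniformly in $n$), and then use continuity of $\sig_m$ for the middle term. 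Your phrase ``combined with the a.e.\ convergence of the mollified coefficients'' does not capture this: the obstruction is the discontinuity of $\tilde\sig$ itself, not the rate at which $\tilde\sig^n\to\tilde\sig$. The same point applies to the drift term. Finally, one should verify that the Krylov bound survives the reflecting boundary; in one dimension this is fine (the occupation density with respect to $ds$ is not affected by the local time at $0$), but it should be stated. None of this is fatal, but it is precisely the technical machinery that the paper's odd-extension-plus-Tanaka argument is designed to bypass.
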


In the literature on Markov control problems, a map from the state space to
the control action space is often called a {\it stationary (feedback) control policy},
or a {\it policy} for short. For our WCP, a policy is thus
a bounded measurable map $\xi^0:\R_+\to\slp$. Thus
part 1 of the lemma above shows that a policy $\xi^0$
induces an admissible control system for the WCP, and part 2 provides
a candidate optimal policy.

\begin{proof}
1.  We appeal to Th.\ 5.5.15 and Rem.\ 5.5.19 in
\cite{kar-shr}, according to which a weak solution exists
to the SDE
\begin{equation}\label{55}
	Y_t=y+\int_0^t\tilde b(Y_s)ds+\int_0^t\tilde\sig(Y_s)dB_s
\end{equation}
on the real line,
where $\tilde b,\tilde\sig:\R_+\to\R$ are measurable, $\tilde b$ bounded,
and $\tilde\sig^2$ bounded away from zero and infinity.
In particular, this is true for $\tilde b$ and $\tilde\sig$ given by
$\tilde b(x)=\sgn(x)b(\xi^0(|x|))$, $\tilde\sig(x)=\sgn(x)\sig(\xi^0(|x|))$, $x\in\R$
and the initial condition $y=z\ges0$, where we define $\sgn(x)=1$ if $x>0$
and $\sgn(x)=-1$ if $x\les0$.
These coefficients are indeed bounded because the set $\slp$
is bounded, and moreover $\tilde\sig^2$ is bounded away from zero because
$\sig_m >0, 1 \le m \le M$.
Let then $(Y,B)$ be a corresponding solution to \eqref{55}.
Then by Tanaka's formula (\cite{rev-yor} Theorem VI.1.2, p.\ 222)
there exists a process $L^0$ with sample paths in $C^{0,+}_\R[0,\iy)$ such that
\begin{align*}
	|Y_t|&=
	y+\int_0^t\sgn(Y_s)^2b(\xi^0(|Y_s|))ds
	+\int_0^t\sgn(Y_s)^2\sig(\xi^0(|Y_s|))dB_s+L^0_t,
\end{align*}
showing that $(Z,L^0,B)$ is a solution to \eqref{54},
where $Z=|Y|$.
Proposition VI.1.3 on p.\ 222 of \cite{rev-yor} justifies that
$L^0$ satisfies $\int_{[0,\iy)}Z_tdL^0_t=0$.\vspace{0.2cm}

2. One only needs to show that $A^m$ are Borel measurable.
To this end, note that the set of $(v_1,v_2)$ for which
$\BH(v_1,v_2)=\bar\BH(v_1,v_2,\xi^{*,m})$ is Borel.
Since $u$ is $C^2$, so are the sets used in the recursive
construction of $A^m$, as the inverse images
of the above sets under a continuous map.
\end{proof}

\subsection{Classical solution and WCP optimality}
\label{sec43}

\begin{proof}[Proof of Proposition \ref{prop0}.]
The proof is based on results from the PDE literature
regarding the existence of classical solutions to
\eqref{14} on a bounded domain with Dirichlet boundary conditions.

Step 1: Existence of classical solutions on a bounded domain.
Consider
\begin{equation}
	\label{a1}
	{\mathbb H}(u'(z),u''(z))+z-\gamma u(z)=0,
	\qquad z\in(0,z_0),
\end{equation}
with boundary conditions $u(0)=r_0$, $u(z_0)=r_1$, where $z_0\in(0,\iy)$
and $r_i\in\R$ are constants.
We rely on \cite[Theorem 17.18]{gil-tru}. To apply this result
it suffices to verify conditions
\cite[(17.62)]{gil-tru} (see the discussion on the Bellman equation that follows
the statement of \cite[Theorem 17.18]{gil-tru}).
To this end, note that uniform ellipticity holds in our setting because
$\sig_m>0$ for all $m$. The coefficients denoted $a$, $b$, $c$ and $f$
in \cite{gil-tru} correspond here to $\sig(\xi)^2/2$, $b(\xi)$,
$-\gamma$ (which do not depend on $z$)
and $-z$. Clearly these functions and their derivatives w.r.t.\ $z$ are bounded on
the finite interval. It follows that a classical solution to \eqref{a1} with the Dirichlet
boundary conditions $r_0$, $r_1$
uniquely exists, denoted in what follows by $u^{(z_0,r_0,r_1)}$.

Step 2: A control problem on a bounded domain.
Fix $z_0$ and let $v=u^{(z_0,V_{\rm WCP}(0),V_{\rm WCP}(z_0))}$.
Consider the control problem up to exiting the domain $(0,z_0)$,
defined analogously to \eqref{51} by
\begin{align*}
J^{(z_0)}(z,\frS)&=\E\Big[\int_0^\tau e^{-\gamma t}Z_tdt+e^{-\gamma\tau}V_{\rm WCP}(Z_\tau)\Big],
\\
V^{(z_0)}(z)&=\inf\{J^{(z_0)}(z,\frS):\frS\in\calA_{\rm WCP}(z)\},
\end{align*}
for $z\in[0,z_0]$ and $\tau=\inf\{t:Z_t\in\{0,z_0\}\}$
(on the event $\tau=\iy$,
the term $e^{-\gamma\tau}V_{\rm WCP}(Z_\tau)$ is defined as $0$).
Given Step 1 and the existence of a weak solution to \eqref{54}--\eqref{167}
stated in Lemma \ref{lem3}, an argument similar to the one
provided in the verification theorem
\cite[Th.\ IV.5.1]{fle-son} shows that
$v(z)=V^{(z_0)}(z)$ for $z\in[0,z_0]$. We omit the details.

Step 3: $V_{\rm WCP}$ is a solution to \eqref{14}--\eqref{14+}.
By the dynamic programming principle
(see e.g.\ \cite[Theorem 6, page 132]{kry-book} with $r_t=0$ and $\ph_t=\gamma t$),
we have
$V_{\rm WCP}(z)=V^{(z_0)}(z)$ for all $z\in[0,z_0]$.
Combining with the last step, 
$V_{\rm WCP}(z)=v(z)$ for all $z\in[0,z_0]$.
Since $z_0$ is arbitrary, this shows that $V_{\rm WCP}$
is a classical solution to \eqref{14} on all of $[0,\iy)$.

Next, by the boundedness
of $b$ and $\sig$ in \eqref{15}, it is easy to see
that $\E Z_t\le c(z+t)$ for some constant $c$ which does not depend on the
control system.
Hence by \eqref{51}, we have $0\le V_{\rm WCP}(z)\le c(z+1)$.

As for the boundary condition $V'_{\rm WCP}(0)=0$, it is a standard fact
that this follows from the reflection at the origin. For completeness
we provide a proof. Given $z>0$ and $\eps>0$ let
$\frS=(\Om',\calF',(\calF'_t)_{t\geqslant 0},\PP',B,\X,Z,L)\in\calA_{\rm WCP}(z)$ be such that
\[
J_{\rm WCP}(z,\frS)<V_{\rm WCP}(z)+\eps.
\]
By \eqref{15}, one has $(Z,L)=\Gam(z+K)$, where
$K_t=\int_0^tb(\X_s)ds+\int_0^t\sig(\X_s)dB_s$.
If we define $\tilde\frS$ using the same probability space and
processes $B$ and $\X$, but with $(\tilde Z,\tilde L)=\Gam(K)$,
then $\tilde\frS\in\calA_{\rm WCP}(0)$.
Moreover, $\tilde Z_t\le Z_t$ for all $t$. As a result,
\[
V_{\rm WCP}(0)\le J_{\rm WCP}(0,\tilde\frS)\le J_{\rm WCP}(z,\frS)\le V_{\rm WCP}(z)+\eps.
\]
Since $\eps$ is arbitrary it can be dropped.

Similarly, let
$\frS=(\Om',\calF',(\calF'_t)_{t\geqslant 0},\PP',B,\X,Z,L)\in\calA_{\rm WCP}(0)$ be such that
\[
J_{\rm WCP}(0,\frS)<V_{\rm WCP}(0)+\eps.
\]
Then (with notation as before)
$(Z,L)=\Gam(K)$, and defining $(\tilde Z,\tilde L)=\Gam(z+K)$ one has
$\tilde\frS\in\calA_{\rm WCP}(z)$. Denoting $\Del_t=\tilde Z_t-Z_t$,
\[
V_{\rm WCP}(z)-V_{\rm WCP}(0)\le J_{\rm WCP}(z,\tilde\frS)-J_{\rm WCP}(0,\frS)+\eps\le\E\int_0^\iy e^{-\gamma t}\Del_tdt+\eps.
\]
Let $\tau=\inf\{t:\tilde Z_t=0\}=\inf\{t:K_t=-z\}$.
Then it is easy to see by the definition of $\Gam$ that $\Del_t\le z$ for all $t$,
while $\Del_t=0$ for all $t\ge\tau$.
Hence
\begin{equation}\label{b1}
	0\le V_{\rm WCP}(z)-V_{\rm WCP}(0)\le z\E\int_0^\tau e^{-\gamma t}dt+\eps\le z\E[\tau\w c_0]+\eps,
\end{equation}
where $c_0=\gamma^{-1}$.
For the martingale $\hat K_t=\int_0^t\sig(\X_s)dB_s$ one
has $\lan\hat K\ran_t=\int_0^t\sig^2(\X_s)ds$, and because $\sig$ is bounded away from
$0$ and $\iy$, one has $0<c_1\le (t_2-t_1)^{-1}(\lan\hat K\ran_{t_2}-\lan\hat K\ran_{t_1})\le c_2<\iy$,
and thus the map $t\mapsto\lan\hat K\ran_t$ admits a strictly increasing, Lipschitz inverse $T$.
By \cite[Theorem 3.4.6]{kar-shr}, $Y_t:=\hat K_{T(t)}$ is a SBM.
By the boundedness of $b$, $K_t\le \hat K_t+ct$. Hence for any $t>0$,
\begin{align*}
	\PP'(\tau>t)&=\PP'(K_s>-z \text{ for all } s\le t)
	\\
	&\le \PP'(\hat K_s+cs>-z \text{ for all } s\le t)
	\\
	&=\PP'(Y_s+cT(s)>-z \text{ for all } s\le \lan\hat K\ran_t)
	\\
	&\le\PP'(Y_s+cc_1^{-1}s>-z \text{ for all } s\le c_1t).
\end{align*}
This bound depends only on $t$ and $z$, and converges to zero as $z\to0$.
Hence it follows that $\E[\tau\w c_0]\le\kappa(z)$ for some function $\kappa$
which satisfies $\lim_{z\downarrow0}\kappa(z)=0$.
Using this in \eqref{b1} shows that $z^{-1}|V_{\rm WCP}(z)-V_{\rm WCP}(0)|\le\kappa(z)+z^{-1}\eps$.
Sending $\eps\downarrow0$ and then $z\downarrow0$
shows that $V'_{\rm WCP}(0)$ exists and equals $0$.
This completes the proof that
$V_{\rm WCP}$ it is a classical solution to \eqref{14}--\eqref{14+}.

Step 4: uniqueness on all of $[0,\iy)$.
It remains to show that if $u$ is any classical solution to \eqref{14}--\eqref{14+}
then $u=V_{\rm WCP}$. This is also done
in the spirit of \cite[Theorem IV.5.1]{fle-son}, but here the details
are more subtle due to the boundary condition at infinity,
and we provide the proof.

Let $\frS=(\Om',\calF',(\calF'_t)_{t\geqslant 0},\PP',B,\X,Z,L)$ be an admissible
control system. It will be shown that $J_{\rm WCP}(z,\frS)\ge u(z)$.
Since $u$ is $C^2$,
a use of It\^o's formula gives
\begin{align}
	e^{-\gamma t}u(Z_t)
	&=u(z)+\int_{0}^{t}e^{-\gamma s} u'(Z_s)\left[b(\X_s)d s+\sigma(\X_s)dB_s+dL_s\right]-\gamma\int_{0}^{t}e^{-\gamma s}u(Z_s) ds\notag\\
	&\quad +\frac{1}{2}\int_{0}^{t}e^{-\gamma s}u''(Z_s)\sigma^2(\X(s))ds.\label{eq:verif}
\end{align}
Because $\int_{0}^{\infty}Z_tdL_t=0$ and $u'(0)=0$, we have
\[
\int_{0}^{t}e^{-\gamma s} u'(Z_s)dL_s=\int_0^t e^{-\gamma s}u'(Z_s)\one_{\{Z_s=0\}} dL_s=0.
\]
By equation \eqref{14},
for any member $\xi$ of $\slp$ we have
\begin{equation}
	\frac{1}{2}\sig(\xi)^2u''(y)+b(\xi)u'(y)+y-\gamma u(y)\geqslant 0,
	\qquad y\in\R_+.
	\label{eq:hjbineq}
\end{equation}
Substituting in \eqref{eq:verif}, denoting
$\tilde M_t=\int_{0}^{t}e^{-\gamma s}u'(Z_s)\sigma(\X_s)dB_s$,
\begin{equation}\label{eq:ineqito}
	e^{-\gamma t}u(Z_t)\geqslant u(z)+\tilde M_t-\int_{0}^{t}e^{-\gamma s }Z_s ds.
\end{equation}
We argue that $\tilde M_t$ is in $L^1$ for any $t$,
by which the local martingale $\tilde M_t$ is a martingale.
First we show that $\tilde M_t^+$ is in $L^1$ for any $t$.
In view of \eqref{eq:ineqito}, to bound $\E[\tilde M_t^+]$, it suffices to estimate
$\E[u(Z_t)]$ and $\E[\int e^{-\gamma s}Z_sds]$.
To bound $Z_t$, fix $t\geqslant 0$ and denote $\tau=\sup\{s\in[0,t]:Z_s=0\}$.
We distinguish two cases. Case~(i): $\tau\in[0,t]$ and $Z_\tau=0$.
In this case
$$
Z_t=\int_{\tau}^{t}b(\X_s)ds +\int_{\tau}^{t}\sigma(\X_s)dB_s,
$$
because
$L_t=L_\tau$. Case (ii): $Z_s>0$ for all $s\in[0,t]$. Then
$$Z_t=z+\int_{0}^{t}b(\X_s)ds +\int_{0}^{t}\sigma(\X_s)dB_s.$$
Since $Z_t\geqslant0$, in both cases one has
$$
\|Z\|_t\les c+ct+2\sup_{\theta\leqslant t}\Big\lvert\int_{0}^{\theta}\sigma(\X_s)dB_s\Big\rvert
$$
for a constant $c$ that does not depend on $t$.
Because $\sig$ is bounded, the second moment
of the last term is bounded by $ct$, which gives $\E\|Z\|_t\le c(1+t)$.
In particular, $\E[\int_0^t e^{-\gamma s}Z_sds]<\iy$. Moreover,
by \eqref{14+},
\begin{equation}\label{eq:supu}
	\E[\|u(Z_.)\|_t]\leqslant c(1+t).
\end{equation}
In view of \eqref{eq:ineqito}, this shows not only that
$\E[\tilde M_t^+]<\iy$ but also $\E[\|\tilde M^+\|_t]<\iy$.
Next, let $\tau^N$ be a sequence of stopping times
such that $\tau^N\to +\infty$ a.s., and $\tilde M_{t\wedge \tau^N}$ is a martingale for every $N$.
Then $\E[\tilde M^+_{t\wedge \tau^N}]=\E[\tilde M^-_{t\wedge \tau^N}]$.
Using this identity and Fatou's Lemma,
\[
\E[\tilde M^-_t]\leqslant \liminf_{N\to +\infty}\E[\tilde M^-_{t\wedge \tau^N}]
=\liminf_{N\to +\infty}\E[\tilde M^+_{t\wedge \tau^N}]
\leqslant
\E[\|\tilde M^+\|_t]<\iy.
\]
This shows that $\tilde M_t$ is a martingale.

Taking the mean and rearranging in \eqref{eq:ineqito}, for any $t$,
\begin{equation}\label{56}
	J_{\rm WCP}(z,\frS)\geqslant
	\E\int_{0}^{t}e^{-\gamma s} Z_s ds
	\geqslant u(z)-e^{-\gamma t}\E\left[u(Z_t)\right].
\end{equation}
In view of \eqref{eq:supu}, letting $t\to +\infty$ gives  $J_\text{WCP}(z,\frS)\geqslant u(z)$.

To show that $u=V_{\rm WCP}$ it remains to find $\frS$ for which
$J_\text{WCP}(z,\frS)= u(z)$.
To this end, let $\xi^u$ be as in \eqref{167}.
Let an initial condition $z$ be given.
Form a control system
$\frS^u=(\Om,\calF,\{\calF_t\},\PP,\X,Z,L)$ by letting
$(Z,B,L)$ be a tuple satisfying \eqref{54} with $\xi^0=\xi^u$ and initial condition $z$,
$\X_t=\xi^u(Z_t)$ and $\calF_t=\sig\{(Z_s,L_s,B_s):s\in[0,t]\}$, $t\ge0$.
Here we have assumed, without loss of generality,
that the weak solution to \eqref{54} is defined on our original probability
space. Then using $\X_t=\xi^u(Z_t)$ in \eqref{14} gives
\[
b(\X_s)u'(Z_s)+\frac{1}{2}\sig^2(\X_s)u''(Z_s)+Z_s-\gamma u(Z_s)=0.
\]
Hence by \eqref{eq:verif}, \eqref{eq:ineqito} holds with equality. Thus
$\E\int_0^te^{-\gamma s}Z_sds=u(z)-e^{-\gamma t}\E[u(Z_t)]$.
To take the $t\to\iy$ limit,
use monotone convergence on the LHS and the bound \eqref{eq:supu} on the RHS.
This shows $J_{\rm WCP}(z,\frS^u)=u(z)$ and completes the proof.
\end{proof}

As a corollary of the proposition and its proof we obtain
a construction of an optimal solution to the WCP.

\begin{corollary}\label{cor1}
	Denote $u=V_{\rm WCP}$. Let $\xi^u$ be as in \eqref{167}.
	Let an initial condition $z$ be given.
	Form an admissible control system
	$\frS^u=(\Om,\calF,\{\calF_t\},\PP,\X,Z,L)$ by letting
	$(Z,B,L)$ be a tuple satisfying \eqref{54} with $\xi^0=\xi^u$ and initial condition $z$,
	$\X_t=\xi^u(Z_t)$ and $\calF_t=\sig\{(Z_s,L_s,B_s):s\in[0,t]\}$, $t\ge0$.
	Then $\frS^u$ is optimal for the WCP with initial condition $z$.
\end{corollary}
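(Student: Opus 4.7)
The plan is to observe that this corollary is essentially a repackaging of the second half of Step 4 in the proof of Proposition~\ref{prop0}, so I would organize the proof as a verification combined with an equality computation.

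First I would check that $\frS^u$ is indeed an admissible control system. Existence of a weak solution $(Z,B,L)$ to \eqref{54} with $\xi^0=\xi^u$ is granted by Lemma~\ref{lem3}(2), since $\xi^u$ was shown there to be Borel measurable as a map $\R_+\to\slp$. The process $\X_t=\xi^u(Z_t)$ is then adapted to $\{\calF_t\}$ by continuity of $Z$ and Borel measurability of $\xi^u$, takes values in $\slp$ by construction, and is progressively measurable. The standard Brownian motion $B$ is a martingale on $\{\calF_t\}$ by the choice of filtration (the natural filtration of the solution), $Z$ is continuous, non-negative and adapted, $L\in C^{0,+}_{\R}[0,\iy)$, and the reflection identity $\int_{[0,\iy)}Z_tdL_t=0$ holds by the construction in Lemma~\ref{lem3}(1). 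Thus $\frS^u\in\calA_{\rm WCP}(z)$.

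Next I would establish the equality $J_{\rm WCP}(z,\frS^u)=u(z)$ by repeating the It\^o computation from Step~4 of the proof of Proposition~\ref{prop0}, but now observing that the inequality \eqref{eq:hjbineq} becomes an equality for $\xi=\xi^u(Z_s)$. Indeed, by the definition of $\xi^u$ in \eqref{167}, we have $\bar\BH(u'(Z_s),u''(Z_s),\xi^u(Z_s))=\BH(u'(Z_s),u''(Z_s))$ for every $s\ges0$, so the HJB equation \eqref{14} gives
\begin{equation*}
\tfrac{1}{2}\sig^2(\X_s)u''(Z_s)+b(\X_s)u'(Z_s)+Z_s-\gamma u(Z_s)=0.
\end{equation*}
Plugging this into the identity \eqref{eq:verif} (which holds as an equality before any HJB estimate is used) and using $u'(0)=0$ together with the reflection identity to eliminate the $dL$-term, the inequality \eqref{eq:ineqito} becomes the equality
\begin{equation*}
e^{-\gamma t}u(Z_t)=u(z)+\tilde M_t-\int_0^t e^{-\gamma s}Z_s\,ds,
\end{equation*}
with $\tilde M$ as in that proof.

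The final step is to take expectations and let $t\to\iy$. The martingale property of $\tilde M_t$ under $\frS^u$ follows exactly as in Step~4 (the argument there uses only boundedness of $b$, $\sig$ and the sub-linear growth of $u$, all of which apply here). Hence taking $\E$ gives $\E\int_0^t e^{-\gamma s}Z_s\,ds=u(z)-e^{-\gamma t}\E[u(Z_t)]$, and then monotone convergence on the left-hand side, together with the bound $\E[u(Z_t)]\le c(1+t)$ from \eqref{eq:supu} applied on the right-hand side, yields $J_{\rm WCP}(z,\frS^u)=u(z)$. Combined with Proposition~\ref{prop0}, which gives both $u=V_{\rm WCP}$ and $J_{\rm WCP}(z,\frS)\ges u(z)$ for every admissible $\frS$, we conclude that $\frS^u$ attains the infimum and is therefore optimal. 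I do not foresee a genuine obstacle: the only non-routine point is the verification that $\xi^u$ is measurable into $\slp$, which has already been handled in Lemma~\ref{lem3}(2).
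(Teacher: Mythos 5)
Your proof is correct and follows essentially the same route as the paper: the paper does not give a separate proof of the corollary but instead derives it directly from the construction in Step 4 of the proof of Proposition~\ref{prop0}, where exactly the system $\frS^u$ is built, the HJB equation \eqref{14} combined with $\X_t=\xi^u(Z_t)$ turns \eqref{eq:ineqito} into an equality, and the $t\to\iy$ limit is taken via monotone convergence on the left and \eqref{eq:supu} on the right. Your additional explicit verification that $\frS^u$ is admissible is sound and simply spells out what the paper leaves implicit in its appeal to Lemma~\ref{lem3}.
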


The next result is concerned with a special case in which there exists a dominant mode,
showing that the WCP's solution is to always use this mode.

\begin{proposition}\label{prop2}
	Assume there exists a mode, say, $\xi^{*,1}$, such that for all $2\le m\le M$
	one has $b(\xi^{*,1})\le b(\xi^{*,m})$ and $\sig(\xi^{*,1})\le\sig(\xi^{*,m})$.
	Then it is optimal to always use this mode. In other words,
	the statement of Corollary \ref{cor1} holds with $\xi^0(z)=\xi^u(z)=\xi^{*,1}$, $z\in\R_+$.
\end{proposition}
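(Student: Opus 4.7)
The strategy is to produce an explicit candidate for the value function by analyzing the reflected diffusion obtained when the constant control $\xi \equiv \xi^{*,1}$ is used, and then to verify that this candidate solves the HJB equation \eqref{14}--\eqref{14+}. Uniqueness of the classical solution (Proposition \ref{prop0}) together with the prescription \eqref{167} will then force the optimal feedback to be the constant map $\xi^u \equiv \xi^{*,1}$.

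Concretely, let $u_1(z)$ denote the cost associated with the admissible control system in which $\X_t \equiv \xi^{*,1}$, so that $Z$ is a reflected Brownian motion with constant drift $b_1 = b(\xi^{*,1})$ and constant diffusion coefficient $\sig_1 = \sig(\xi^{*,1})$, started at $z \geqslant 0$. Standard theory gives $u_1 \in C^2(\R_+)$ satisfying the linear ODE
\begin{equation*}
\tfrac{1}{2}\sig_1^2 u_1''(z) + b_1 u_1'(z) - \gamma u_1(z) + z = 0, \qquad z > 0,
\end{equation*}
with $u_1'(0) = 0$ and linear growth. Solving this explicitly (one particular solution of the form $z/\gamma + b_1/\gamma^2$ plus the decaying exponential $e^{\alpha_- z}$, where $\alpha_-$ is the negative root of the characteristic polynomial) gives
\begin{equation*}
u_1'(z) = \tfrac{1}{\gamma}(1 - e^{\alpha_- z}) \geqslant 0, \qquad u_1''(z) = -\tfrac{\alpha_-}{\gamma}e^{\alpha_- z} > 0.
\end{equation*}
So $u_1$ is nondecreasing and strictly convex on $\R_+$.

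The next step exploits that both $b$ and $\sig^2$ are affine in $\xi$ (see \eqref{50}) and that $\slp = \text{\rm ch}\{\xi^{*,m}\}_{m \leqslant M}$. Since $u_1'(z), u_1''(z) \geqslant 0$ and $b(\xi^{*,1}) \leqslant b(\xi^{*,m})$, $\sig(\xi^{*,1})^2 \leqslant \sig(\xi^{*,m})^2$ for all $m$, affinity yields
\begin{equation*}
\bar\BH(u_1'(z), u_1''(z), \xi) \geqslant b_1 u_1'(z) + \tfrac{1}{2}\sig_1^2 u_1''(z) = \bar\BH(u_1'(z), u_1''(z), \xi^{*,1}), \qquad \xi \in \slp,
\end{equation*}
so $\BH(u_1'(z), u_1''(z)) = \bar\BH(u_1'(z), u_1''(z), \xi^{*,1})$. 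Substituting into the ODE shows that $u_1$ solves the HJB equation \eqref{14}, and the boundary conditions \eqref{14+} were established above. By the uniqueness part of Proposition \ref{prop0}, $u_1 = V_{\rm WCP}$.

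Finally, the identity $A^1_u = \R_+$ follows from the inequality just displayed (even at $z = 0$, where $u'(0) = 0$, the coefficient of $u''$ is minimized at $\xi^{*,1}$ because $\sig_1^2$ is smallest). Hence the feedback \eqref{167} reduces to $\xi^u(z) \equiv \xi^{*,1}$, and Corollary \ref{cor1} delivers the claim. I expect the only point requiring some care to be the verification that $u_1', u_1'' \geqslant 0$, since the monotonicity argument for $\BH$ relies on these signs; the explicit solution of the linear ODE handles this cleanly.
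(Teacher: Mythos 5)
Your proof is correct and takes essentially the same approach as the paper's: both identify the explicit $C^2$ solution for the single-mode reflected diffusion (the paper quotes it from Sheng (1978), you derive it directly from the ODE), verify $u'\geqslant 0$ and $u''\geqslant 0$, use the dominance of $(b_1,\sigma_1)$ together with affinity of $b,\sigma^2$ to show this function solves the HJB equation with $A^1_u=\R_+$, and then invoke Proposition \ref{prop0} and Corollary \ref{cor1}.
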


\begin{proof}
In view of Proposition \ref{prop0} and Corollary \ref{cor1},
it suffices to find a classical solution $u$ to
\eqref{14}--\eqref{14+} which satisfies $A^1_u=\R_+$,
where we use the notation of Lemma \ref{lem3}.
We rely on the explicit expression for the cost associated with the policy
that always uses the mode $\xi^{*,1}$, (see equation (46) on p.\ 104 of \cite{Sheng78}).
This function is given by
\[
u(z)=c_1+c_2z+\frac{c_2}{c_3}e^{-c_3z},
\]
with explicit
constants $c_i\in(0,\iy)$. In particular, $u'(z)\ge0$ and $u''(z)\ge0$ for all $z$,
and $\bar\BH(u'(z),u''(z),\xi^{*,1})+z-\gamma u(z)=0$ for all $z$.
Using the assumption on $b(\xi^{*,1})$ and $\sig(\xi^{*,1})$ and the
positivity of $u'$ and $u''$ shows that
\[
\bar\BH(u'(z),u''(z),\xi^{*,1})=\min_{m\le M}\bar\BH(u'(z),u''(z),\xi^{*,m})
=\BH(u'(z),u''(z)).
\]
As a result, $u$ is a solution to \eqref{14}. Clearly, it also satisfies the boundary conditions
\eqref{14+}. This shows that $A^1_u$ of \eqref{167} is all of $\R_+$, with $u$
a classical solution to \eqref{14}--\eqref{14+}.
\end{proof}

When it is optimal to use only one mode, one can readily adapt results that were proved when the solution to the LP is unique. The policy introduced in \cite{bw2} Section 5 is based on a solution to \eqref{02}, as long as the graph of basic activities induced by the solution is a tree. For that reason, we will speak of the ``Bell and Williams policy based on $\xi$'' to describe a policy constructed in the same way as in \cite{bw2} Section 5 as if $\xi$ was the unique solution to \eqref{02}. Note that $i^*$ from their paper is here denoted $q$ and we use $\xi^{*,1}$ as $x^*$.
\begin{corollary}\label{cor2}
	Keep the assumption from Proposition \ref{prop2}. Assume further that $\xi^{*,1}$ is non-degenerate, and let Assumption 3.2 from \cite{bw2} for the arrival and service distributions hold.
Then the Bell and Williams policy based on $\xi^{*,1}$ is AO (the lower bound from Theorem \ref{thm:lowerbound} is tight).
\end{corollary}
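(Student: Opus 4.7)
The plan is to port Bell and Williams' asymptotic optimality proof \cite{bw2} to our setting by exploiting the observation that, whenever a single mode $\xi^{*,1}$ is optimal for the WCP, a policy that only uses the basic activities of $\xi^{*,1}$ generates prelimit dynamics that are formally identical to those studied in \cite{bw2} under their heavy traffic assumption. The upper bound produced by their policy can then be matched against the lower bound of Theorem \ref{thm:lowerbound} via Proposition \ref{prop2}.

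First, I would verify that the Bell--Williams construction from Section 5 of \cite{bw2} is well-defined in our context. Because $\xi^{*,1}$ is nondegenerate and satisfies $(G\xi^{*,1})_k = 1$ for every $k$, the graph whose edges are the basic activities of $\xi^{*,1}$ is a tree connecting all classes and servers, and this is the only structural input their construction needs to produce the threshold priority policy. The index $i^*$ in \cite{bw2} corresponds to our $q$ and the reference ``workload direction'' corresponds to our dual solution $y^*$; by Lemma \ref{lem:primaldual} and the selection \eqref{145}, all their conditions are satisfied.

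Second, I would invoke the main asymptotic theorem of \cite{bw2} essentially verbatim. Under the prescribed policy: (i) no activity $j \in \calA_{\rm AN}$ (relative to $\xi^{*,1}$) is ever used, so the auxiliary process $\hat L^n_{\rm AN}$ of Section \ref{sec:wcp} vanishes identically; (ii) the state space collapses so that $\hat X^n_i \to 0$ u.o.c.\ in probability for all $i \ne q$, which upgrades the inequality \eqref{144} to an asymptotic equality $\hat H^n_t \to h_q(y^*_q)^{-1} \hat W^n_t$; and (iii) no server idles while work remains, so $\hat F^n$ and $\hat W^n$ differ by a negligible term and Lemma \ref{lem5} becomes asymptotically tight. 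The asymptotic analysis of $\hat F^n$ via \eqref{096} with $T^n \approx \xi^{*,1}\cdot\iota$ identifies the limit of $\hat W^n$ as the reflected diffusion $Z$ with constant drift $b_1 = b(\xi^{*,1})$ and constant diffusion coefficient $\sigma_1 = \sigma(\xi^{*,1})$, driven by a SBM. The renewal-based moment bounds of \cite{bw2}, combined with the discount factor $\gamma > 0$, supply the uniform integrability needed to pass to the limit in the cost:
\begin{equation*}
\lim_{n} \hat J^n(T^n) \;=\; h_q(y^*_q)^{-1}\, \E\!\int_0^\infty e^{-\gamma t} Z_t\, dt.
\end{equation*}
By Proposition \ref{prop2}, the constant policy $\xi \equiv \xi^{*,1}$ is optimal for the WCP starting at $0$, so the right-hand side equals $h_q(y^*_q)^{-1} V_{\rm WCP}(0) = V_0$. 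Combining with Theorem \ref{thm:lowerbound} gives $\limsup_n \hat V^n \le V_0 \le \liminf_n \hat V^n$, establishing AO.

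The main obstacle is auditing the proofs in \cite{bw2} to confirm that their uniqueness-of-LP-solution hypothesis is used only to \emph{select} a distinguished nondegenerate allocation, and never in an essential way that would fail under multiplicity. The suspect places are the tightness proof for $\hat X^n$ and $\hat I^n$ and the state space collapse argument, but these rely purely on Lyapunov estimates built from $\xi^{*,1}$ and the associated tree structure; since our policy, like theirs, routes effort exclusively along that tree, the presence of other extreme points of $\slp$ never enters the analysis. Once this audit is complete, the corollary follows from the chain of equalities above.
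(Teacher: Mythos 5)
Your proposal takes a genuinely different route from the paper, and the difference matters: the paper never performs the "audit" of \cite{bw2} that you identify as the main obstacle, because it sidesteps it entirely with a restriction construction. Specifically, the paper deletes from the activity set every activity that is non-basic for $\xi^{*,1}$ (there are $I+K-1$ remaining, forming a tree by nondegeneracy and Assumption \ref{ass:lp}(2)), producing a \emph{restricted PSS} in which $\xi^{*,1}$ is the \emph{unique} solution to the static allocation LP. Because the restricted problem has a unique LP solution, all of the hypotheses of \cite{bw2} (including their CRP / dual-uniqueness Assumption 4.4, verified via their Theorem 4.3(iii)) hold as stated, and their asymptotic optimality theorem applies verbatim --- no re-reading of their proofs under multiplicity is needed. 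One then observes that the WCP value for the restricted PSS coincides with the WCP value for the original (this is exactly where Proposition \ref{prop2} is used: the single-mode reflected Brownian motion with coefficients $b(\xi^{*,1})$, $\sigma(\xi^{*,1})$ is optimal for both), and that the Bell--Williams policy for the restricted system is \emph{a fortiori} an admissible policy for the original system achieving the same cost. The conclusion then follows by matching the policy's cost against Theorem \ref{thm:lowerbound}.

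Your argument, by contrast, asks the reader to trust that uniqueness in \cite{bw2} is "used only to select a distinguished nondegenerate allocation," and you correctly flag that completing this audit would be the real work. As written, this is a gap: the paper's restriction trick converts the statement into one where \cite{bw2} applies literally, so no audit is required. You might also note a small imprecision in your step (i): under multiplicity, the activities avoided by the Bell--Williams policy split into those in $\calA_{\rm AN}$ (so $\hat L^n_{\rm AN} \equiv 0$) and those in $\calA_{\rm PB}$ that are merely non-basic \emph{for} $\xi^{*,1}$; the latter class does not exist in the single-mode setting of \cite{bw2}, and it is the restriction construction --- not an appeal to \cite{bw2}'s internal arguments --- that cleanly handles it.
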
 
We give here proof based on the results and proofs from \cite{bw2}.
\begin{proof}
Under condition 2. of Assumption \ref{ass:lp}, as argued in \cite{harlop} Section 2, any non degenerate extreme point of $\slp$ has $I+K-1$ basic activities. Construct a restricted PSS by removing all activities associated with non-basic activities of $\xi^{*,1}$.  
Then $\xi^{*,1}$ is the unique solution to \eqref{02} for the restricted PSS. The solution to the original dual is also a solution of the dual for the restricted PSS. With our setup, Assumptions 3.1, 3.2, 3.3, and 3.6 from \cite{bw2} hold in the restricted PSS. Since Theorem 4.3(iii)  of \cite{bw2} holds, the solution to the dual in the restricted PSS is also unique and Assumption 4.4 from \cite{bw2} holds.
Thus the restricted PSS continues to satisfy our Assumption \ref{ass:lp}, and the graph of basic activities  is a tree.

Under the assumption of Corollary \ref{cor1}, $V_{\rm WCP}$ for both the original and restricted PSS is given by 
\[\E\int_{[0,\iy)}e^{-\gamma t}Z_t dt,\]
with 
\[Z_t=z+tb(\xi^{*,1})+\sig(\xi^{*,1})B_t+L_t.\]
This is also the lower bound in Bell and Williams \cite{bw2}.
The results in \cite{bw2} state that their policy is AO for the restricted PSS, so it must also be optimal for the original problem since it achieves the best possible cost.
\end{proof}

{\sc Example (A) - continued.}
We go back to Example (A), now considering it with second order data.
Assume that the squared coefficients of variation $C^2_{A_i}$ and $C^2_{S_j}$ are all
$1$ except $C^2_{S_1}=4$. Set both $\hat\la_i$ to $0$.
As for $\hat\mu_j$, consider two cases. In case (A1), all $\hat\mu_j$ are set to $0$.
In case (A2), they are all $0$ except $\hat\mu_1=1$.

For each of the modes we can compute the drift and squared diffusion coefficients
via \eqref{50}--\eqref{eq:param}.
In case (A1) we obtain
\[
b_1=0,
\quad
\sig_1^2=\frac{3}{7},
\quad
b_2=0,
\quad
\sig_2^2=\frac{15}{49}.
\]
In case (A2),
\[
b_1=-\frac{1}{7},
\quad
\sig_1^2=\frac{3}{7},
\quad
b_2=-\frac{1}{21},
\quad
\sig_2^2=\frac{15}{49}.
\]
It is seen that in case (A1), $b_2\le b_1$ and $\sig_2\le\sig_1$.
By Proposition \ref{prop2}, it is optimal to always use mode $2$. Moreover,
Corollary \ref{cor2} applies with mode $2$.

In case (A2), $b_1<b_2$ and $\sig_1>\sig_2$, and so none of the modes dominates.
As already mentioned, in this case the HJB equation has been solved in
\cite{Sheng78}, showing that both modes must be used. In particular,
there exists a switching point $z^*$ such that it is optimal to use
one mode when the state process $Z$ is below $z^*$,
and the other when it is above. In the case considered here,
mode $2$ is used below $z^*$ and mode $1$ above it. In particular,
the simplification provided by Corollary \ref{cor2} does not apply. Instead,
a certain switching mechanism
which mimics the WCP solution is required for the PSS model itself in order
to attain AO. We refer to \cite{ourupperbound} for more information.

\skp

{\sc Examples (C) and (E) - continued.}
Recall that in Example (C),
because of the LP degeneracy, the results of  \cite{ourupperbound}
do not apply. However, there is a case where we can still state
an AO result. This is the case where mode 2 (the non-degenerate mode) is dominant in the sense
of Proposition \ref{prop2}, i.e., $b(\xi^{*,2})\le b(\xi^{*,1})$ and $\sig(\xi^{*,2})\le \sig(\xi^{*,1})$.
In this case it is optimal to only use this mode, and we can apply Corollary \ref{cor2}.
In other words, under the assumptions of Proposition \ref{prop2},
the Bell and Williams policy based on $\xi^{*,2}$ is in this case AO.
A similar remark holds for Example (E) with mode $\xi^{*,3}$,
which is the only non-degenerate mode in this case.

\section{An asymptotic lower bound}\label{sec5}
\beginsec

The goal of this section is to prove Theorem \ref{thm:lowerbound}.

\subsection{Lemmas and proof of Theorem \ref{thm:lowerbound}}
\label{sec51}

Theorem \ref{thm:lowerbound} will be proved once it is shown that
for every sequence $\{T^n:n\in\N\}$, with $T^n\in\calA^n$,
one has $\liminf_n\hat J^n(T^n)\ge V_0$. A sufficient condition is that
for every subsequence along which $\limsup_{n}\hat J^n(T^n)<\iy$,
one has $\liminf_n\hat J^n(T^n)\ge V_0$.
Let ${\rm Lip}(\slp)$ denote the collection of paths $\theta\in C(\R_+:\R^\calJ)$
satisfying $(t-s)^{-1}(\theta_t-\theta_s)\in\slp$ for all $0\les s<t$, and $\theta_0=0$.
A preliminary lemma, proved in \S\ref{sec52}, is as follows.

\begin{lemma}\label{lem11}
	Let $\{T^n\}$, $T^n\in\calA^n$ be a sequence of admissible controls
	for the QCP and consider a subsequence (relabelled $\{T^n\}$) along which
	$\limsup_{n\to +\infty}\hat J^n(T^n)<+\infty$.
	Then the sequence of processes $T^n$ is $C$-tight. Consider a further subsequence
	along which $(\hat  A^n,\hat S^n,T^n)\Rightarrow(A,S,T)$.
	Then $T$ has sample paths in ${\rm Lip}(\slp)$ a.s., and if $\{\calF'_t\}_{t\geqslant 0}$ is
	a filtration  to which $T$ is adapted then $T$ is given by
	$\int_0^\cdot\X_sds$ for some $\X\in\frX(\{\calF'_t\})$.
\end{lemma}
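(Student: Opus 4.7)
The plan is to combine the $1$-Lipschitz property of $T^n$ (which yields $C$-tightness for free) with the cost bound (which forces the fluid-scale bias to vanish in the limit).

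\emph{$C$-tightness and a candidate derivative.} Since every $\X^n_j\ges 0$ and $\sum_{j\in\calJ^k}\X^n_j\les 1$, each coordinate $T^n_j$ is $1$-Lipschitz and starts at $0$, so $\{T^n\}$ is equicontinuous and uniformly bounded on compacts. Arzelà-Ascoli yields $C$-tightness. Passing to a subsequence along which $(\hat A^n,\hat S^n,T^n)\To(A,S,T)$, Skorohod representation lets me work on a single probability space with a.s.\ uniform-on-compacts convergence. The limit $T$ is $1$-Lipschitz with $T(0)=0$, hence absolutely continuous, and I set $\X_s:=\dot T_s$ where the derivative exists.

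\emph{Showing $\X_s\in\slp$.} Monotonicity of $T^n_j$ gives $\X\ges 0_J$, and $t-(GT^n(t))_k=I^n_k(t)\ges 0$ gives $(G\X(s))_k\les 1$ for a.e.\ $s$. The key point is $(R\X(s))_i=\la_i$ a.e. Rewrite \eqref{eq:balancex} as
$$
\hat X^n_i(t)=E^n_i(t)+n^{1/2}\Bigl[\la_i t-\sum_{j\in\calJ_i}\mu_j T^n_j(t)\Bigr],
$$
where $E^n_i(t):=\hat A^n_i(t)-\sum_{j\in\calJ_i}\hat S^n_j(T^n_j(t))+\hat\la^n_i t-\sum_{j\in\calJ_i}\hat\mu^n_j T^n_j(t)$ converges a.s.\ u.o.c.\ to a finite-valued limit (since $(\hat A^n,\hat S^n)\To(A,S)$, $T^n\to T$ u.o.c., and $\hat\la^n,\hat\mu^n$ converge). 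From $\hat X^n_i\ges 0$ I then obtain $\la_i t-\sum_j\mu_j T_j(t)\ges 0$ for all $t$. Conversely, the cost bound yields $\int_0^\infty e^{-\gamma t}\E[h_i\hat X^n_i(t)]\,dt\les C$, so by Fatou $\liminf_n\hat X^n_i(t)<\infty$ a.s.\ for a.e.\ $t$; since $E^n_i(t)$ is a.s.\ bounded, this forces $\la_i t-\sum_j\mu_j T_j(t)\les 0$ at such $t$ (else the $n^{1/2}[\cdot]$ term would push $\hat X^n_i(t)$ to $+\infty$). Combining and invoking continuity of $T$, $\la_i t=\sum_j\mu_j T_j(t)$ for all $t$, so $(R\X(s))_i=\la_i$ a.e. Together with $\X\ges 0_J$, $G\X\les 1_K$ and Assumption~\ref{ass:lp}(2), this forces $\X(s)\in\slp$ for a.e.\ $s$ a.s.

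\emph{Adaptedness and the main obstacle.} If $T$ is adapted to $(\calF'_t)$, set $\X^m_s:=m(T_s-T_{(s-1/m)^+})$, which is $(\calF'_s)$-progressively measurable for each $m$. By Lebesgue's differentiation theorem $\X^m_s\to\X_s$ for a.e.\ $s$ a.s.; taking a pointwise $\limsup$ (and redefining on the exceptional null set to a fixed element of $\slp$) produces a progressively measurable representative $\X\in\frX((\calF'_s))$ with $T_t=\int_0^t\X_s\,ds$. The main obstacle is the step $(R\X)_i=\la_i$: the cost bound controls only a time-integral, so extracting pointwise-in-$t$ finiteness of $\liminf_n\hat X^n_i(t)$ requires the combined use of Fatou's lemma and Skorohod representation, and one must be careful to show that the fluid bias $n^{1/2}[\la_i t-\sum_j\mu_j T^n_j(t)]$ is genuinely pushed to $0$ in the limit, not merely kept nonnegative by the queueing constraint.
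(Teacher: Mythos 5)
Your proof is correct and follows the same overall architecture as the paper's, but the key step — showing that $\la_i t=\sum_{j\in\calJ_i}\mu_j T_j(t)$, and hence $(R\X)_i=\la_i$ a.e.\ — is done via a different route.

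The paper works at the \emph{fluid} scale: it introduces $\bar X^n_i(t)=n^{-1}X^n_i(t)$, invokes the functional LLN for the renewal processes to obtain $(\bar A^n,\bar S^n,T^n,\bar X^n)\To(\bar A,\bar S,T,\bar X)$ with $\bar X_i(t)=\la_it-\sum_j\mu_jT_j(t)$, then observes $n^{-1/2}\hat J^n(T^n)=\E\int e^{-\gamma t}h(\bar X^n(t))\,dt\to 0$, applies Fatou to conclude $\E\int_0^{t_0}h(\bar X(t))\,dt=0$, and finishes with continuity, $h>0$ and $\bar X\ge 0$. You instead stay at the \emph{diffusion} scale, isolating the term $n^{1/2}[\la_i t-\sum_j\mu_jT^n_j(t)]$ inside $\hat X^n_i$, using $\hat X^n_i\ge 0$ for the lower bound and a Fubini/Fatou argument on the cost bound to extract pointwise finiteness of $\liminf_n\hat X^n_i(t)$ for a.e.\ $t$, which then forces the upper bound. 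Both arguments rest on the same two facts — non-negativity of $X^n$ and finiteness of the cost — and both conclude via continuity of $T$. Your approach avoids citing the LLN explicitly but is a bit more delicate in that it requires extracting pointwise-in-$t$ control from a time-integrated bound; the paper's fluid-scale route is cleaner precisely because at that scale the cost term collapses to zero with no residual. You correctly flagged this as the main obstacle.

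Two minor points. First, to conclude $\X(s)\in\slp$ from $R\X=\la$, $G\X\le 1_K$, $\X\ge 0$ you should invoke the EHTC (Assumption~\ref{ass:lp}(1), i.e.\ $\rho^*=1$), not Assumption~\ref{ass:lp}(2); the argument is that $(\X(s),1)$ is feasible for \eqref{02}, hence optimal since $\rho^*=1$. Second, for the progressively measurable representative the paper uses a lexicographic-infimum construction following \cite{DM}, whereas you use backward difference quotients $\X^m_s=m(T_s-T_{(s-1/m)^+})$ and Lebesgue differentiation. Your route is fine (the $\X^m$ are continuous and adapted, hence progressively measurable, and a componentwise pointwise limsup preserves this), but be careful to redefine on the exceptional $(\omega,s)$-null set to a fixed element of $\slp$ so that the resulting process genuinely lies in $\frX(\{\calF'_t\})$.
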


Theorem \ref{thm:lowerbound} will follow once it is established that
for every subsequence as in Lemma \ref{lem11} there exists
an admissible system for the WCP, $\frS$, such that
\begin{equation}\label{153}
	\liminf_n\hat J^n(T^n)\ges h_q(y^*_q)^{-1}J_{\rm WCP}(\frS).
\end{equation}
We now outline how
$\frS$ is constructed from the tuple $(A,S,T)$.
The tuple and subsequence
along which $(\hat A^n,\hat S^n,T^n)\To(A,S,T)$
are fixed in the rest of this section.

The admissible system can be constructed, without loss of generality, on
the original space $(\Om,\calF,\PP)$. The remaining elements
of $\frS$ to be constructed are a filtration $\{\calH_t\}_{t\geqslant 0}$ and processes $(B,Z,L,\X)$ satisfying
the definition of an admissible system.
The pair $(Z,L)$ is not obtained simply as a  limit of $(\hat W^n,\hat L^n)$,
because for a general sequence of policies for the QCP
it is not true that such a limit satisfies $\int ZdL=0$.
Instead, one takes the limit in \eqref{096} and obtains
$F$ defined in \eqref{eq:wtilde} as the limit of $\hat F^n$,  and sets
$(Z,L)=\Gam[F]$.
Later, when it is established that $\frS$ is an admissible system,
the lower bound \eqref{153} is obtained upon taking limits in
the inequality \eqref{094}.

If one can construct a filtration $\{\calH_t\}_{t\geqslant 0}$
to which $(T,F)$ (and consequently $(Z,L)$) is adapted and on which the processes $A_i$
and $D_{j}\coloneqq S_{j}\circ T_{j}$ are martingales
(compare with the definition of $D^n$ in \eqref{40-}),
then by Lemma \ref{lem11} there exists $\X\in\frX(\{\calH_t\}_{t\geqslant 0})$,
and moreover $M$ defined in \eqref{166}
is also a martingale. Letting $B=\int_0^\cdot\sig(\X_s)^{-1}dM_s$,
the calculations in \S \ref{sec:wcp} show
that $B$ is a SBM and a martingale w.r.t.\ this filtration,
and the construction of $\frS$ is complete.
(The precise details appear in Lemma \ref{lem4}.)

However, the task of constructing the filtration to which $(T,F)$
are adapted and on which $D_{j}$ are martingales
is surprisingly more difficult than when the limiting process $T$
is deterministic (which is the case in all earlier work on
heavy traffic asymptotic control problems). When $T$ is deterministic
there is no question of it being adapted, and moreover
$D_{j}$ are themselves
independent BMs. Thus on the filtration generated by the mutually independent
$A_i,D_{j}$, it is immediate that these processes are martingales, and so is $M$.
Obviously, this direct approach is not fruitful when $T$ is random.

The approach we develop is based on multi-parameter filtrations,
specifically, on a theorem due to Kurtz \cite{kurtz80}(presented as Theorem \ref{th:kurtz} below)
that translates martingales on a multi-parameter filtration to ones on
a single-parameter filtration.
(Multi-parameter filtrations were used in the
related papers \cite{bw1} and \cite{bw2}
for other purposes; see e.g.\ \S 7 of \cite{bw1}).
In preparation to use this tool for our needs we proceed in two main steps.

The first step is carried out in Lemma \ref{lem03}, where it is argued
that, for every $t$ and $n$, a certain transformation of the RV $T^n_t$
(see \eqref{162}) constitutes
a multi-parameter stopping time on the multi-parameter filtration generated
by $\Ups$ and the $I+J$ processes $A^n_i$, $S^n_{j}$ (specifically, \eqref{161}).
This formal statement expresses the intuitively apparent
fact that in order to determine
whether the cumulative busyness until time $t$ in the activities satisfy
$T^n_{j}(t)\le u_{j}$, it is enough to know the collection
$S^n_{j}(v_{j})$ for $v_{j}\le u_{j}$ and $A^n_i(s)$ for $s\le t$
(in addition to $\Ups$).

It will simplify notation if we use, in this section only,
a common notation for the processes $A^n$ and $S^n$,
and, similarly, for $A$ and $S$.
Recall that activities are indexed by $\calJ\subset\calI\times\calK$.
Introduce $\calJ^0\coloneqq \left\lbrace (i,0), i\in \calI\right\rbrace \cup \calJ$ and set $S^n_{i0}=A^n_i$ for $i\in \calI$.
Similarly, $S_{i0}=A_i$, $i\in \calI$.
This way we have a single index set, $\calJ^0$, where
the indices $j=(i,0)$ are associated with the arrival processes
and $j=(i,k)\in\calJ$, as before, with potential service processes.  

The second main step, performed in Lemma \ref{lem:bcp2wcp}(2),
is showing that the $I+J$-dimensional limit process $(A_i,S_{j})$ in our original notation, and $(S_j)$ in our notation for this section,
constitutes a multi-parameter
martingale on a certain filtration defined in terms of the limit processes
$(S, T)$ (specifically, \eqref{155}). This is crucially based on the first step.
Once this step is established, Theorem \ref{th:kurtz} can be applied,
and in several further steps, namely Lemma \ref{lem:bcp2wcp}(3--5)
and Lemma \ref{lem4}, the construction of $\frS$ is
completed by the approach outlined above.

Let  $\calU=\R_+^{\calJ^0}$.
In this section, the letter $u$ is used as a generic member of $\calU$
(rather than a solution to \eqref{14}).
Given $u\in\calU$, let $S^n(u)$ denote $(S^n_{j}(u_{j}))$.
Define the partial order $\LE$
on $\calU$ by $u\LE v$ iff $u_{j}\les v_{j}$ for all $j\in\calJ^0$. We refer to \cite{kurtz80} for more details on multi-parameter filtrations, martingales and stoping times
(with the partial order given above).
The following is Theorem 6.3(a) in \cite{kurtz80}.
\begin{theorem}[Kurtz]
	\label{th:kurtz}
	Let $\calG_u$ be a filtration indexed by $\calU$,
	and let $\tilde S_j$, $j\in\calJ^0$ be a finite collection of independent SBM for which
	\begin{equation}\label{154}
		\Ph_\theta^{\calM}(u)=\prod_{j\in\calM}\exp\{\mathbf{i}\theta_j \tilde S_j(u_j)
		+\textstyle\frac{1}{2}\theta_j^2u_j\}
	\end{equation}
	is a $\{ \calG_u\}$-martingale for all $\calM\subset\calJ^0$.
	Let $ \tilde T(t)$, $t\in[0,\iy)$ be $\{\calG_u\}$-stopping times
	such that $\tilde T$ has sample paths in $C_\R^+[0,\infty)^{I+J}$
	a.s. Then, for each $\calM\subset\calJ^0$, $\prod_{j\in\calM} \tilde S_j( \tilde T_j(t))$ is a continuous local martingale
	with respect to $\calH_t:=\calG_{ \tilde T(t)}$.
\end{theorem}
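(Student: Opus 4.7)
The plan is to deduce the conclusion from a multi-parameter optional stopping argument followed by L\'evy's characterization of continuous local martingales. The first step is to justify that $\Phi_\theta^{\calM}(\tilde T(t))$ is an $\{\calH_t\}$-martingale in the time parameter $t$. Since by hypothesis $\{\Phi_\theta^{\calM}(u):u\in\calU\}$ is a $\{\calG_u\}$-martingale indexed by the partially ordered set $(\calU,\LE)$, and $\tilde T(t)$ is a $\{\calG_u\}$-stopping time for each $t$ with continuous nondecreasing sample paths (so that $s\le t$ implies $\tilde T(s)\LE \tilde T(t)$), the multi-parameter optional sampling theorem applies (see e.g.\ the Cairoli-Walsh / Kurtz theory for strong martingales indexed by $\R_+^d$; right-continuity of $\calG$ and the boundedness of $|\Phi_\theta^{\calM}|$ in a neighbourhood of $u$ provides the requisite uniform integrability once one localizes by bounding $\tilde T_j(t)$). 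The output is that for every $\calM\subset\calJ^0$, the process
\begin{equation*}
t\mapsto\prod_{j\in\calM}\exp\bigl\{\mathbf{i}\theta_j \tilde S_j(\tilde T_j(t))+\tfrac{1}{2}\theta_j^2\tilde T_j(t)\bigr\}
\end{equation*}
is a bounded $\{\calH_t\}$-martingale.

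Next I would apply L\'evy's characterization. Specializing to $\calM=\{j\}$, the exponential martingale identity for every $\theta_j\in\R$ is exactly the statement that $M^j_t:=\tilde S_j(\tilde T_j(t))$ is a continuous $\{\calH_t\}$-local martingale with predictable quadratic variation $\langle M^j\rangle_t=\tilde T_j(t)$. Continuity of sample paths is inherited from continuity of $\tilde S_j$ and of $\tilde T_j$. Then, taking $\calM=\{j,k\}$ for $j\ne k$, the two-dimensional exponential martingale identity, together with the single-index results, forces the cross variation $\langle M^j,M^k\rangle_t\equiv 0$; concretely, applying It\^o's formula to $\exp\{\mathbf{i}\theta_j M^j_t+\mathbf{i}\theta_k M^k_t+\frac12\theta_j^2\tilde T_j(t)+\frac12\theta_k^2\tilde T_k(t)\}$ and matching the drift term to $0$ in the $\theta_j\theta_k$ coefficient yields $d\langle M^j,M^k\rangle_t=0$. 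More generally, the same argument for an arbitrary $\calM$ shows that the $M^j$, $j\in\calM$, are pairwise orthogonal continuous local martingales on $\{\calH_t\}$.

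Finally, from the orthogonality of the $M^j$ one concludes that their product is a continuous local martingale: iteratively applying integration by parts to the continuous semimartingales $M^{j_1},\ldots,M^{j_{|\calM|}}$ produces an expression whose only finite-variation terms come from the pairwise brackets $\langle M^{j_p},M^{j_q}\rangle$, all of which vanish. Hence $\prod_{j\in\calM}M^j_t=\prod_{j\in\calM}\tilde S_j(\tilde T_j(t))$ is a continuous $\{\calH_t\}$-local martingale, completing the argument.

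The delicate step is the first one: verifying that the multi-parameter optional sampling theorem applies in exactly the generality stated. In Kurtz's setup this hinges on the independence of the coordinates $\tilde S_j$, which makes the filtration generated by the collection a "commuting" product filtration in the sense needed for strong-martingale optional stopping; without this, passing from a $\{\calG_u\}$-martingale to an $\{\calH_t\}$-martingale after a multi-parameter stop is not automatic, and care with the conditional independence (F4) condition is what ultimately underlies the hypothesis that $\Phi_\theta^{\calM}$ be a martingale for every subset $\calM$ rather than just for $\calM=\calJ^0$. The rest (L\'evy characterization and orthogonality $\Rightarrow$ product is a local martingale) is then standard.
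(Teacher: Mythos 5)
The paper does not prove this statement: it is quoted verbatim from an outside source, with the remark in \S\ref{sec51} that ``The following is Theorem 6.3(a) in \cite{kurtz80}.'' So there is no paper proof to compare your argument against; you have attempted to reconstruct Kurtz's proof from scratch.

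Your high-level architecture is reasonable and is, I believe, essentially what Kurtz does: stop the exponential martingales at the multi-parameter time $\tilde T(t)$ to get $\{\calH_t\}$-martingales; read off from the $\calM=\{j\}$ case that each $M^j=\tilde S_j\circ\tilde T_j$ is a continuous local martingale with bracket $\tilde T_j$; read off from the $\calM=\{j,j'\}$ case that $\lan M^j,M^{j'}\ran=0$; and then argue by repeated integration by parts that a product of pairwise-orthogonal continuous local martingales is a continuous local martingale. Steps 2--4 are correct and standard once step 1 is granted.

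The genuine gap is in step 1, and you correctly flag it yourself as ``the delicate step'' without actually closing it. The issue is that $\calU=\R_+^{\calJ^0}$ is only a directed set under $\LE$, and a multi-parameter martingale indexed by a directed set does \emph{not} in general satisfy optional sampling at arbitrary increasing families of stopping times. Neither does the Cairoli--Walsh F4 machinery apply out of the box: F4 is a commuting property of the filtration $\{\calG_u\}$, but the hypotheses of the theorem say nothing about F4 holding for $\{\calG_u\}$ (and in the application in \S\ref{sec5}, where $\calG_u$ also carries information about the control $T$ via $\psi^{(u)}(\bar T_t)$, there is no reason for F4 to hold). The whole point of Kurtz's formulation is that requiring $\Ph_\theta^{\calM}$ to be a $\{\calG_u\}$-martingale for \emph{every} subset $\calM\subset\calJ^0$, not just $\calM=\calJ^0$, is exactly the substitute for F4 that makes an optional sampling theorem available for this specific family of exponential martingales. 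Your sketch waves at this (``care with the conditional independence (F4) condition is what ultimately underlies the hypothesis'') but does not supply the argument. So the proposal is incomplete at precisely the point where the directed-set optional sampling has to be established, which is the mathematical content of Kurtz's Theorem 6.3(a).

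Two smaller remarks. First, in step 2 the conclusion that $M^j$ is a continuous local martingale with $\lan M^j\ran_t=\tilde T_j(t)$ does not literally follow from ``L\'evy's characterization'' (which is a converse statement); what you want is the standard fact that if $\exp\{\mathbf i\theta M_t+\tfrac12\theta^2 A_t\}$ is a local martingale for all $\theta\in\R$ with $A$ continuous, increasing, adapted and $M$ continuous, then $M$ is a continuous local martingale with $\lan M\ran=A$. Second, the localization you invoke requires that $\tilde T_j(t)$ be $\calH_t$-measurable so that the hitting times $\inf\{t:\tilde T_j(t)>N\}$ are $\{\calH_t\}$-stopping times; this holds because a multi-parameter stopping time is measurable with respect to its own stopped $\sigma$-field, but it is worth saying.
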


For $n\in\N$, denote by $\{\calG^n_u\}_{u\in\calU}$ the multi-parameter filtration
\begin{equation}\label{161}
	\calG^n_u=\sigma\left\{\Ups, \, S^n(v)\, :v\LE u\right\},
	\qquad u\in\calU.
\end{equation}
It will further be convenient to
denote, for $T^n\in\calA^n$, by $\bar T^n$ the $I+J$ component process
\begin{equation}\label{162}
	\bar T^n(t)=(t,\ldots,t,T^n_t)=(\bar T^n_{j}(t):j\in\calJ^0), \quad\text{where }
	\bar T^n_{i0}(t):=t,\, i\in \calI.
\end{equation}
A similar notation will be used for an $I+J$ component process $\bar T$, where 
$\bar T_t=(t,\ldots, t,T_t)$ based on the process $T$.
Denote $D_{j}=S_{j}\circ\bar T_{j}$, for all $j\in\calJ^0$ and not just those in $
\calJ$.
The proofs of the next two lemmas appear in \S\ref{sec52}.

\begin{lemma}\label{lem03}
	Fix $n$ and $T^n\in\calA^n$. Then for every $t\ges0$,
	$\bar T^n_t$ is a $\{\calG^n_u\}_{u\in \R_+^{I+J}}$-stopping time.
\end{lemma}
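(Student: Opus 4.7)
Since $\bar T^n_{i0}(t)=t$ is deterministic, the event $\{\bar T^n(t)\LE u\}$ is empty whenever $t>u_{i0}$ for some $i\in\calI$; on the complementary regime, the claim reduces to $\{T^n_j(t)\leqslant u_j,\ \forall j\in\calJ\}\in\calG^n_u$. My approach is to argue inductively on the jump times $\tau_0=0<\tau_1<\tau_2<\cdots$ of the combined counting process $(A^n,D^n)$, which is a.s.\ locally finite since all inter-arrival distributions are strictly positive, so that $\tau_k\to\infty$ a.s. The underlying structural fact is that on each interval $[\tau_k,\tau_{k+1})$ the filtration $(\calF^n_s)$ gains no new information from the primitives (both $A^n$ and $D^n$ are constant there); hence the allocation $\X^n$ restricted to $[\tau_k,\tau_{k+1})$ is a deterministic, $\calF^n_{\tau_k}$-measurable function of the elapsed time, and
\[
T^n_j(s)=T^n_j(\tau_k)+\int_{\tau_k}^{s}g_{k,j}(r-\tau_k)\,dr,\qquad s\in[\tau_k,\tau_{k+1}),
\]
for some $\calF^n_{\tau_k}$-measurable family of maps $g_k=(g_{k,j})_{j\in\calJ}$.

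Setting $\sigma_u=\inf\{s\geqslant 0:T^n_j(s)>u_j\text{ for some }j\in\calJ\}$, the inductive claim I would carry is that $\{\tau_k\leqslant t\wedge\sigma_u\}\in\calG^n_u$ and that, on that event, $T^n|_{[0,\tau_k]}$, the primitive jump data up through $\tau_k$, and $g_k$ are all $\calG^n_u$-measurable. The base case $k=0$ is immediate. For the step $k\to k+1$, observe that $\tau_{k+1}$ is the minimum of the candidate arrival times $\sigma^A_i=\inf\{s>\tau_k:\check A_i(\lambda^n_i s)>\check A_i(\lambda^n_i\tau_k)\}$ for $i\in\calI$ and candidate departure times $\sigma^D_j=\inf\{s>\tau_k:\check S_j(\mu^n_j T^n_j(s))>\check S_j(\mu^n_j T^n_j(\tau_k))\}$ for $j\in\calJ$. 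On $\{\tau_{k+1}\leqslant t\wedge\sigma_u\}$ every value $T^n_j(s)$ for $s\leqslant\tau_{k+1}$ lies in $[0,u_j]$, so $\sigma^D_j\wedge t$ is determined by $\check S_j$ restricted to $[0,\mu^n_j u_j]$, equivalently by $S^n_j|_{[0,u_j]}\in\calG^n_u$; likewise $\sigma^A_i\wedge t$ is determined by $\check A_i$ up to $\lambda^n_i u_{i0}$, again in $\calG^n_u$. Combined with the induction hypothesis this yields measurability of $\tau_{k+1}\wedge t$ and of $T^n|_{[0,\tau_{k+1}\wedge t]}$ on the desired event, closing the induction. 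Taking $k\to\infty$ gives $\{t\leqslant\sigma_u\}\in\calG^n_u$, which by continuity and monotonicity of $T^n$ coincides with $\{T^n(t)\LE u\}$.

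The principal obstacle I anticipate is the rigorous verification that the allocation admits the representation $\X^n_s=g_k(s-\tau_k)$ on $[\tau_k,\tau_{k+1})$ with $g_k$ an $\calF^n_{\tau_k}$-measurable functional. Intuitively this is clear because the only information added to $\calF^n_s$ on this interval is the degenerate event ``no jump has yet occurred,'' which does not enlarge the sigma-algebra generated by $\calF^n_{\tau_k}$ together with deterministic functions of $s$. Making this precise would require either a monotone-class argument on the generators of $\calF^n$ or an explicit construction via regular conditional probabilities given $\calF^n_{\tau_k}$; the auxiliary randomness $\Ups\in\calF^n_0\subseteq\calG^n_u$ is used freely throughout. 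The remainder is a bookkeeping exercise tracking which portions of the $\check A_i,\check S_j$ trajectories are consulted at each step to reconstruct the state.
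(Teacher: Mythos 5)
Your proposal takes essentially the same conceptual route as the paper: induct over the jump times of the combined arrival/departure process, exploit the fact that between jumps the control gains no genuinely new information from the primitives, and patch together the $\calG^n_u$-measurability of the relevant events. However, you correctly flag the point where your argument is incomplete, and that point is in fact the heart of the paper's proof, not an afterthought.

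The paper does not reason locally about $\X^n$ on $[\tau_k,\tau_{k+1})$ at all. Instead it first rewrites $\calF^n_t$ as $\sigma\{K_t,\eta(p\wedge K_t),N(p\wedge K_t),p\in\N\}$, where $\eta(\cdot)$ and $N(\cdot)$ are the jump times and post-jump values of $(S^n_j\circ T^n_j)_j$ and $K_t$ counts the jumps on $[0,t]$, and likewise rewrites $\calG^n_u$ in terms of $S^n(u)$ and the renewal epochs $\tau_j(p\wedge S^n_j(u_j))$. It then applies the Doob--Dynkin/factorization lemma (Dellacherie--Meyer Theorem I.18) for each fixed $t$ to produce a measurable map $\ph_t$ with $T^n_t=\ph_t(K_t,\eta(\cdot),N(\cdot))$, and extracts from continuity and monotonicity of $t\mapsto T^n_t$ the corresponding properties of the family $\ph_t^{(\kappa)}$. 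Crucially, the paper then constructs a \emph{parallel} process $\tilde T_s$ by running the \emph{same} maps $\ph^{(\kappa)}$ on surrogate jump data that are, by design, measurable with respect to $\sigma\{S^n(u),\tau^{\LE S^n(u)}\}\subset\calG^n_u$ (the construction truncates which coordinates are allowed to jump, via the set $G_p$). The proof then establishes by a two-sided induction that $T^n_t\LE u$ iff $\tilde T_t\LE u$, which yields the stopping-time property because $\{\tilde T_t\LE u\}$ is manifestly $\calG^n_u$-measurable.

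So the gap you flag --- ``rigorous verification that the allocation admits the representation $\X^n_s=g_k(s-\tau_k)$'' --- is real, and the paper does not close it by a monotone-class argument or a regular-conditional-probability argument as you anticipate. It closes it by (i) working with $T^n_t$ rather than $\X^n_s$, which lets the factorization lemma give a global functional representation for each fixed $t$ with no conditioning involved, and (ii) introducing the surrogate process $\tilde T$, which isolates exactly the $\calG^n_u$-measurable content of that representation. Without step (ii) in particular, one cannot readily pass from the $\calF^n_{\tau_k}$-measurability of per-interval data to $\calG^n_u$-measurability of $\{T^n(t)\LE u\}$, because $\calG^n_u$ only provides the primitive trajectories on the rectangle $\{v\LE u\}$, not all of $\calF^n_{\tau_k}$; your inductive claim needs to be formulated with respect to this restricted information, and the $\tilde T$-construction is precisely the device that does the bookkeeping. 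In short, the strategy matches, but the technical mechanism you leave open is the proof.
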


Next we define a multi-parameter filtration based on the processes
$(S,T)$.
Denote by $\mathbf{e}\in\calU$ the vector whose all components are $1$. 
For $u\in\calU$,
let $\del^*=\del^*(u)=u+\mathbf{e}$ and define the  function
$\psi^{(u)}:\calU\to\calU$ as
\begin{equation}\label{158}
	\psi^{(u)}(v)=\begin{cases} v, & v\LE u, \\ \del^*, & \text{otherwise.}
	\end{cases}
\end{equation}
The choice of the constant $\del^*$ is immaterial as long as it lies outside
$\{v:v\LE u\}$.
Let
\begin{equation}\label{155}
	\mathcal{G}_u\coloneqq \sigma\{S(v),
	\, \psi^{(u)}(\bar T_t):\, v\LE u,\,t\geqslant 0\},
	\qquad u\in\calU.
\end{equation}
According to the definition of $\psi^{(u)}$, the $\sig$-field $\calG_u$
carries the information of whether or not $\bar T_t\LE u$,
as well as the full information about $\bar T_t$ provided it is $\LE u$.
It is immediate that, for every $t$, $\bar T_t$ is a stopping time on this filtration.
Hence
\[
\calH_t=\calG_{\bar T_t}, \quad t\ges0,
\]
is a well-defined (single parameter) filtration. 

The second statement in the result below, that allows us to use
Theorem \ref{th:kurtz}, is crucially based on Lemma \ref{lem03}.
Statements 3--5 then follow mostly from Theorem \ref{th:kurtz}.
Denote by $S^\text{unit}=(S^\text{unit}_{j},j\in\calJ^0)$ the SBM (of unit diffusivity)
$\sig_{j}^{-1}S_{j}$, where $\sig_{j}=\sig_{A,i}$ for $j=(i,0)$, and $\sig_{j}=\sig_{S,j}$ for $j\in \calJ$.

\begin{lemma}\label{lem:bcp2wcp}
	\begin{itemize}
		\item[1.]
		$\bar T_t$ (equivalently, $T_t$) is $\calH_t$-adapted.
		\item[2.] For every $u,v\in\calU$, $\calG_u\indep S(u+v)-S(u)$.
		Consequently, $\{S(u)\}$ is a multi-parameter martingale on $\{G_u\}$, and
		the tuple $(\{\calG_u\},\{S^\text{\rm unit}_j\},\bar T_t)$ satisfies the hypotheses
		of Theorem~\ref{th:kurtz}.
		\item[3.] The processes $D_{j}$, $j\in\calJ^0$
		(i.e., $A_i$ and $S_{j}\circ T_{j}$, $i\in \calI$, $j\in \calJ$)
		are $\mathcal{H}_t$-martingales.
		\item[4.] For distinct indices $j;j'\in\calJ^0$ one has
		$\langle D_{j}, D_{j'}\rangle=0$.
		\item[5.] Finally, for any $j\in\calJ^0$,
		$\langle D_{j}\rangle_t=\sigma_{j}^2\bar T_{j}(t)$.
	\end{itemize} 
	
\end{lemma}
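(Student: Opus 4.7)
The plan is to address the five parts in sequence; part 1 is immediate from the definition of $\calG_u$, parts 3--5 follow from Theorem \ref{th:kurtz} once its hypotheses are verified, and part 2 carries the substantive content of the argument. For part 1, the presence of $\psi^{(u)}(\bar T_t)$ in the generating family \eqref{155} of $\calG_u$ gives $\{\bar T_t \LE u\} \in \calG_u$ for every $u$ (since $\psi^{(u)}$ equals $\bar T_t$ on this event and the constant $\del^*(u)$ off it), so $\bar T_t$ is an $\{\calG_u\}$-stopping time; by the standard definition of $\calG_\tau$ for a multi-parameter stopping time, $\bar T_t$ is then $\calG_{\bar T_t} = \calH_t$-measurable.

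The heart of the proof is part 2. In the prelimit, mutual independence of the primitives $\check A_i, \check S_j$ and the independent-increments property of each of them implies that for any fixed $u \in \calU$ the family $\{S^n(u+w) - S^n(u) : w \in \calU\}$ is independent of $\calG^n_u = \sigma(\Ups, S^n(w) : w \LE u)$; by Lemma \ref{lem03} and the definition \eqref{158}, $\psi^{(u)}(\bar T^n_t)$ is $\calG^n_u$-measurable for every $t \ge 0$. Hence, for bounded continuous $f$, finite collections $w_1,\ldots,w_P \LE u$ and $t_1,\ldots,t_Q \in \Q_+$, and bounded continuous $g$ on the appropriate product space,
\begin{equation}\label{eq:prelim-ind}
\E\bigl[f(S^n(u+v)-S^n(u))\,g\bigl(S^n(w_1),\ldots,S^n(w_P),\psi^{(u)}(\bar T^n_{t_1}),\ldots,\psi^{(u)}(\bar T^n_{t_Q})\bigr)\bigr] = \E[f(\cdot)]\,\E[g(\cdot)].
\end{equation}
The key difficulty is the passage of \eqref{eq:prelim-ind} to the limit, since $\psi^{(u)}$ is discontinuous on $\partial\{v : v \LE u\}$. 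I would handle this by restricting to $u$ in the dense subset $\calU_0 \subset \calU$ on which $\PP(\bar T_{j}(t_q) = u_j) = 0$ for every $j \in \calJ^0$ and every $t_q$ in a fixed countable dense family --- the complement is a countable union of coordinate hyperplanes, hence of empty interior in $\calU$ --- at which points $\psi^{(u)}$ is $\PP$-a.s.\ continuous at $\bar T_{t_q}$, so the continuous mapping theorem yields the limit identity. A monotone class argument then extends the independence from such test functionals to the full $\sigma$-algebras $\calG_u$ and $\sigma(S(u+w) - S(u) : w \in \calU)$, and a right-limit argument using $u^{(k)} \downarrow u$ with $u^{(k)} \in \calU_0$ extends the conclusion from $\calU_0$ to arbitrary $u \in \calU$.

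For parts 3--5, I apply Theorem \ref{th:kurtz} to the tuple $(\{\calG_u\}, \{S^{\text{unit}}_j\}, \bar T_\cdot)$. The multi-parameter martingale hypothesis $\E[\Ph^{\calM}_\theta(v) \mid \calG_u] = \Ph^{\calM}_\theta(u)$ for $u \LE v$ reduces, via part 2 and the Gaussianity and mutual independence of the limiting increments $S^{\text{unit}}_j(v_j) - S^{\text{unit}}_j(u_j)$, to the elementary identity $\E\exp\{\mathbf{i}\theta_j(S^{\text{unit}}_j(v_j) - S^{\text{unit}}_j(u_j)) + \tfrac12 \theta_j^2(v_j - u_j)\} = 1$, and the stopping-time hypothesis for $\bar T_j$ was verified in part 1. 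Taking $\calM = \{j\}$ gives that $D_j / \sigma_j = S^{\text{unit}}_j \circ \bar T_j$ is a continuous local $\calH_t$-martingale, and since $\bar T_j(t) \le t$ (as $T_j$ is $1$-Lipschitz by Lemma \ref{lem11}), Doob's $L^2$ inequality yields $\E \sup_{s \le t} D_j(s)^2 \le c\,t$, upgrading to a true martingale and proving part 3. Taking $\calM = \{j,j'\}$ with $j \ne j'$ gives that $D_j D_{j'}$ is a continuous local martingale, whence the standard identity $D_j D_{j'} - \langle D_j, D_{j'}\rangle$ is a local martingale forces the continuous finite-variation process $\langle D_j, D_{j'}\rangle$ to vanish identically, proving part 4. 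Finally, part 5 follows by the same scheme applied to the auxiliary multi-parameter $\calG_u$-martingale $u \mapsto (S^{\text{unit}}_j(u_j))^2 - u_j$, whose martingale property on $\{\calG_u\}$ follows from part 2 via the zero-mean, variance-$(v_j - u_j)$ character of the limiting increments; this yields that $(D_j/\sigma_j)^2 - \bar T_j$ is a continuous local $\calH_t$-martingale, whence $\langle D_j\rangle_t = \sigma_j^2 \bar T_j(t)$.
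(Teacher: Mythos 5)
The main issue is in part 2, and it is not a technical slip but a missing idea. You assert that mutual independence of the primitives plus ``the independent-increments property of each of them'' implies that $\{S^n(u+w)-S^n(u):w\in\calU\}$ is independent of $\calG^n_u=\sigma(\Ups,S^n(w):w\LE u)$. But the prelimit processes $S^n_j$ are renewal processes (recall $S^n_j(t)=\check S_j(\mu^n_j t)$ with $\check S_j$ a general renewal process), and a renewal process does not have independent increments unless it is Poisson: the law of $S^n_j(u_j+w_j)-S^n_j(u_j)$ depends on the residual lifetime of the renewal in progress at $u_j$, which is a function of the past. So \eqref{eq:prelim-ind} is false in general, and the entire passage-to-the-limit rests on a false premise. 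The paper's proof is organized precisely around this obstruction: it introduces the shifted renewal epochs $\tau^n_j$ (the first renewal time strictly after $x_j$) and the auxiliary increment $\tilde\Delta^n=\hat S^n(\tau^n+y-x)-\hat S^n(\tau^n)$, uses a strong-Markov-type lemma for renewal processes (Lemma 7.6 of \cite{bw1} applied to a multi-parameter stopping time) to show that $\tilde\Delta^n$ \emph{is} independent of $\sigma(S^n(v):v\LE u_\eps)$, and then shows $\tilde\Delta^n-\Delta^n\to 0$ so that the independence survives the weak limit. That mechanism is absent from your argument. Your treatment of the discontinuity of $\psi^{(u)}$ (restricting to a dense set $\calU_0$ and passing to right limits) is a plausible alternative to the paper's explicit continuous interpolant $\psi^{(u)}_{\rm cts}$, but it cannot save the argument because the premise it is built on is wrong.

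There is also a secondary gap in part 5. Theorem \ref{th:kurtz} as stated applies only to the specific exponential martingales $\Phi^\calM_\theta$, and its conclusion is only that the products $\prod_{j\in\calM}\tilde S_j\circ\tilde T_j$ are continuous local $\calH_t$-martingales. It does not assert that an arbitrary multi-parameter $\calG_u$-martingale, such as $u\mapsto (S^{\text{unit}}_j(u_j))^2-u_j$, becomes an $\calH_t$-martingale after the random time change, so your proposed route to $\langle D_j\rangle_t=\sigma_j^2\bar T_j(t)$ is not justified by the quoted result. The paper instead freezes all coordinates of $u$ except $j$ at a large constant, extracts a genuine single-parameter filtration $\calG^{(j)}_b$ with respect to which $S_j$ is a martingale and $\bar T_j(t)$ a stopping time, and then invokes the standard one-dimensional time-change identity $\langle S_j\circ\bar T_j\rangle_t=\langle S_j\rangle_{\bar T_j(t)}$ (Proposition V.1.5 of \cite{rev-yor}). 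You would either need that construction or a strengthening of Theorem \ref{th:kurtz} beyond what is available.
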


We now arrive at the main lemma asserting that an admissible system $\frS$ exists
with the desired properties listed earlier. The proof, presented below,
is based on Lemmas \ref{lem11}--\ref{lem:bcp2wcp}.
The proof of the Theorem \ref{thm:lowerbound} is presented immediately afterwards,
and establishes the fact that $\frS$ indeed satisfies \eqref{153}.
Recall that $\hat F^n$ and $F$
are defined in \eqref{096} and \eqref{eq:wtilde}.

\begin{lemma}\label{lem4}
	Let the assumptions of Lemma \ref{lem11} hold and
	let $\{\calH_t\}$ be the filtration constructed above.
	Then along the aforementioned convergent subsequence,
	where $(\hat A^n,\hat S^n,T^n)\To(A,S,T)$,
	one has
	$(\hat A^n,\hat S^n,T^n,\hat F^n)\To
	(A,S,T,F)$.
	Moreover, there exist on $(\Om,\calF)$
	processes $(B,Z,L)$ such that the tuple
	$\frS:=(\Om,\calF,\{\calH_t\}_{t\geqslant 0},\PP,B,\X,Z,L)$
	forms an admissible control system for the WCP,
	$T=\int_0^\cdot\X_sds$, and
	\begin{align}\label{098}
		(Z,L)&=\Gam[F],
		\notag
		\\
		F_t &=\sum_iy^*_i\Big(A_i(t)+\hat\la_it
		-\sum_{j\in \calJ_i}S_{j}(T_{j}(t))+\hat\mu_{j}T_{j}(t)\Big)\notag\\
		&= \int_0^tb(\X_s)ds+\int_0^t\sig(\X_s)dB_s.
	\end{align}
\end{lemma}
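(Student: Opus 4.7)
The plan is to assemble the convergence statement and the admissible WCP system from the ingredients prepared in Lemmas \ref{lem11}--\ref{lem:bcp2wcp}. First I would establish the joint convergence $(\hat A^n,\hat S^n,T^n,\hat F^n)\To(A,S,T,F)$. Since \eqref{096} expresses $\hat F^n$ as a continuous functional of $(\hat A^n,\hat S^n,T^n)$ (the only non-trivial ingredient being composition $\hat S^n_j\circ T^n_j$, which is continuous at the $C$-tight limit $T$ by Lemma \ref{lem11}), the continuous mapping theorem applied along the given subsequence yields the joint convergence, with $F$ given by the first displayed expression in \eqref{098}.

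Next I would produce $(Z,L)$. Set $(Z,L)=\Gam[F]$; because $\Gam$ is continuous on $D_\R[0,\infty)$, the pair is continuous with $L\in C^{0,+}_\R[0,\infty)$, $Z\ge 0$, and $\int_{[0,\infty)}Z_t\,dL_t=0$. The filtration $\{\calH_t\}=\{\calG_{\bar T_t}\}$ renders $T$ adapted by Lemma \ref{lem:bcp2wcp}(1); each $S_j(T_j(\cdot))$ and $A_i$ is $\calH_t$-adapted by construction, so $F$ and hence $(Z,L)$ are $\{\calH_t\}$-adapted. By Lemma \ref{lem11}, $T=\int_0^\cdot\X_s\,ds$ for some $\X\in\frX(\{\calH_t\})$.

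To construct $B$, define the candidate martingale part
\[
M_t=\sum_iy^*_i\Big(A_i(t)-\sum_{j\in\calJ_i}S_j(T_j(t))\Big).
\]
By Lemma \ref{lem:bcp2wcp}(3) each $A_i$ and each $S_j\circ T_j$ is an $\calH_t$-martingale, so $M$ is an $\calH_t$-local martingale; combining the orthogonality of Lemma \ref{lem:bcp2wcp}(4) with $\langle D_j\rangle_t=\sigma_j^2\bar T_j(t)$ from Lemma \ref{lem:bcp2wcp}(5) and the bound $\bar T_j(t)\le t$ yields $\langle M\rangle_t\le ct$, which upgrades $M$ to a true martingale. Using these same identities and $T_j(t)=\int_0^t\X_j(s)\,ds$,
\[
\langle M\rangle_t=\sum_i(y^*_i)^2\Big(\sigma_{A,i}^2\,t+\sum_{j\in\calJ_i}\sigma_{S,j}^2\,T_j(t)\Big)=\int_0^t\sigma(\X_s)^2\,ds
\]
by the definition \eqref{50} of $\sigma$. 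Since $\slp$ is compact and $\sigma_m>0$ for every mode, $\sigma$ is bounded away from $0$ on $\slp$, so $B_t:=\int_0^t\sigma(\X_s)^{-1}\,dM_s$ is well defined, continuous, an $\calH_t$-martingale, and satisfies $\langle B\rangle_t=t$; L\'evy's characterization identifies it as an $\{\calH_t\}$-SBM.

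Finally I would verify the SDE representation of $F$ in \eqref{098}. Splitting the first-line expression for $F$ into its drift and martingale parts, the drift contribution is $\sum_iy^*_i(\hat\la_i t-\sum_{j\in\calJ_i}\hat\mu_j T_j(t))=\int_0^t b(\X_s)\,ds$ by the definition \eqref{50} of $b$, while the remaining pieces form $M_t=\int_0^t\sigma(\X_s)\,dB_s$ by the very definition of $B$. The identity $\int_{[0,\infty)}Z_t\,dL_t=0$ and reflection come from $(Z,L)=\Gam[F]$, so all requirements in the definition of an admissible control system for the WCP are met. The main technical obstacle in this whole chain has already been dispatched inside Lemma \ref{lem:bcp2wcp}, where the multi-parameter time-change theorem of Kurtz was leveraged to establish that the $D_j$ are $\calH_t$-martingales with the correct cross-bracket structure; once that identification is in hand, the present proof is essentially bookkeeping.
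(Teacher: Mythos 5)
The proposal is correct and follows essentially the same route as the paper's proof: joint convergence via a random-time-change/continuous-mapping argument, decomposition of $F$ into drift plus the martingale $M$, computation of $\langle M\rangle$ via Lemma \ref{lem:bcp2wcp}(3--5), construction of $B=\int\sigma(\X)^{-1}dM$ and L\'evy characterization, and $(Z,L)=\Gam[F]$ for adaptedness. The only minor addition is your explicit bound $\langle M\rangle_t\le ct$ to upgrade $M$ from local martingale to true martingale, which is a harmless extra step (the paper instead relies directly on Lemma \ref{lem:bcp2wcp}(3) asserting $D_j$ are genuine martingales).
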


\begin{remark}\label{rem5}
	Note that it is not claimed that $\hat W^n$ converges (or is tight), nor that if it has
	a limit $W$ then $W=Z$. Both statements are false in general.
	The most that can be said at this point is that, in view of \eqref{097},
	one has $Z_t\le W_t$ for all $t$ whenever $W$ is a limit of $\hat W^n$.
\end{remark}

\begin{proof}
Since we have $(\hat A^n,\hat S^n,T^n)\To(A,S,T)$,
it follows by the lemma on random change of time in
\cite[page 151]{bill} that $\hat S^n_{j}\circ T^n_{j}\To S_{j}\circ T_{j}$,
justifying the convergence 
$\hat F^n\To F$, first assumed  above \eqref{eq:wtilde}.

For the construction of $\X$, note that
by Lemma \ref{lem:bcp2wcp}, $T_t$ is $\calH_t$ adapted. Hence by
Lemma~\ref{lem11}, there exists $\X\in\frX(\{\calH_t\})$ such that $T=\int_0^\cdot\X_sds$.

Next, $B$ is constructed. By \eqref{eq:wtilde}, \eqref{166}
and the definition of $b$ in \eqref{50},
$$
F_t=\int_0^tb(\X_s)ds+M_t,
\qquad
M_t=\sum_i y^*_i\Big(A_i(t)-\sum_{j\in\calJ_i}S_{j}(T_{j})\Big).
$$
By Lemma \ref{lem:bcp2wcp}, $M$ is an $\calH_t$-martingale, and
$$\langle M\rangle_t=\int_{0}^{t}\sum_i(y^*_i)^2\Big(\sig_{A,i}^2+\sum_{j\in\calJ_i}\sig_{S,j}^2\X_{j}(s)\Big)ds=\int_{0}^{t}\sigma^2(\X(s))ds,
$$
where the last equality uses the definition of $\sig$ in \eqref{50}.
Recalling that $\sigma$ is bounded away from zero, let $B$
be the $\calH_t$-local martingale defined as
$B_t=\int_{0}^{t}\sigma(\X(s))^{-1}dM_s$.
To compute $\langle B\rangle_t$,
we use Proposition IV.2.7 of \cite{rev-yor} (once with $K=\sig(\X(\cdot))^{-1}$,
$M$ as above and $K\cdot M$ for $N$, and a second time with $M$ for $N$),
giving
$$\langle B\rangle_t=\int_{0}^{t}\dfrac{\sigma^2(\X(s))}{\sigma^2(\X(s))}ds=t.$$
By Levy's characterization (Theorem IV.3.6 of \cite{rev-yor}), $B$ is a
SBM.  Moreover, we have the identity
$M_t=\int_{0}^{t}\sigma(\X(s))dB_s$ (by Proposition IV.2.4 \cite{rev-yor}).
By construction, $F$ and $B$ are $\calH_t$-adapted.
Setting $(Z,L)=\Gam[F]$ we thus obtain \eqref{098} as well as adaptedness
of this pair.
This shows that $\frS$ is admissible, completing the proof.
\end{proof}

\begin{proof} [Proof of Theorem \ref{thm:lowerbound}.]
The result will follow once we show that
for every subsequence as in Lemma \ref{lem11}, one has
\begin{equation}\label{092}
	\liminf_n \hat J^n(T^n)\ges h_q(y^*_q)^{-1}J_{\rm WCP}(\frS),
\end{equation}
where $\frS$ is the corresponding admissible system from Lemma \ref{lem4}.
By \eqref{144} and \eqref{097}, for any finite $t_0$,
\[
\hat J^n(T^n)\ges \E\int_0^{t_0}e^{-\gamma t}h(\hat X^n(t))dt
\ges h_q(y^*_q)^{-1}\E\int_0^{t_0}e^{-\gamma t}\Gam_1[\hat F^n](t)dt.
\]
By the weak convergence $\hat F^n\To F$ stated in Lemma \ref{lem4},
and the continuity of $\Gam_1$ (as a map from $D_\R[0,\iy)$
to itself, equipped with the topology of convergence u.o.c.),
it follows that
$\Gam_1[\hat F^n]\To\Gam_1[F]=Z$, where $Z$ is a member of
the tuple $\frS$ of Lemma \ref{lem4}, and we have used \eqref{098}.
Because the convergence is u.o.c.\ and the time interval is finite,
Fatou's lemma is applicable, giving
\begin{equation}\label{099}
	\liminf_n\hat J^n(T^n)
	\ges h_q(y^*_q)^{-1}\E\int_0^{t_0}e^{-\gamma t}Z_tdt.
\end{equation}
By \eqref{51}, as $t_0\to\iy$, the above expectation converges to $J_\text{WCP}(\frS)$.
This gives \eqref{092} and completes the proof.
\end{proof}

\subsection{Proof of lemmas}
\label{sec52}

\begin{proof}[Proof of Lemma \ref{lem11}.]
The $C$-tightness of the processes $T^n$ is immediate from the fact that their
sample paths are $1$-Lipschitz. Next, assume $\limsup \hat J^n(T^n)<\iy$.
Define the processes $\bar{A}^n, \bar{S}^n$ and $\bar{X}^n$ by
\[
\bar{A}^n(t)=\dfrac{\check A(\lambda^n t)}{n},
\qquad
\bar{S}^n(t)=\dfrac{\check S(\mu^n t)}{n},
\qquad
\bar{X}^n(t)=\dfrac{X^n(t)}{n}.
\]
By the LLN for renewal processes (see \cite[Sec.\ 14]{bill}),
$(\bar{A}^n,\bar{S}^n)\to (\bar A,\bar S)$ in probability
uniformly over compact time sets, where $\bar A(t)=\la t$ and $\bar S(t)=\mu t$.
Consider a subsequence along which $T^n\To T$
and note that $T$ has $1$-Lipschitz sample paths a.s. Then using \eqref{40-}, \eqref{40}
and denoting $\bar X_i(t)=\bar A_i(t)-\sum_{j\in\calJ_i}\bar S_{j}(T_{j}(t))$, we have
$(\bar{A}^n, \bar{S}^n, T^n,\bar X^n)\To(\bar A,\bar S,T,\bar X)$.
First, it is shown that a.s., $\sup_{s<t}\Del(s,t)\in\slp$,
where $\Del(s,t)=(t-s)^{-1}(T_t-T_s)$.

To this end, note that if
$\xi\in\R_+^{\calJ}$ satisfies $\sum_{j\in\calJ_i}\xi_{j}\mu_{j}=\la_i$
for all $i$ and $\sum_{j\in\calJ^k}\xi_{j}\les1$ for all $k\in\calK$,
then it satisfies the conditions \eqref{02} with $\rho=1$,
thus in view of the EHTC $\rho^*=1$, $(\xi,1)$ must be a solution to the LP.
In other words, $\xi\in\slp$.

To apply this to $\Del(s,t)$, note first that
by \eqref{42} and the linearity of $h(\cdot)$,
\[
n^{-1/2}\hat J^n(T^n)=\E\int_0^\iy e^{-\gamma t}h(\bar X^n(t))dt.
\]
By assumption, the LHS converges to zero. Hence by Fatou's lemma,
$\E\int_0^{t_0}h(\bar X(t))dt=0$ for any $t_0>0$.
By the a.s.\ continuity of the sample paths of $\bar X$, the structure
$h(x)=h\cdot x$, $h_i>0$, and the non-negativity of $\bar X_i(t)$,
it follows that $\bar X_i(t)=0$ for all $t$ a.s.
Thus for all $t$,
\[
\lambda_it-\sum_{j\in\calJ_i} \mu_{j}T_{j}(t)=0.
\]
This shows $\sum_{j\in\calJ_i}\Del_{j}(s,t)\mu_{j}=\la_i$ for all $i$.
By \eqref{41}, $\sum_{j\in \calJ^k}(T^n_{j}(t)-T^n_{j}(s))\les t-s$ for all $k$ and $s<t$.
Consequently the same holds for $T$. This shows
that $\Del(s,t)\in\slp$ for all $s<t$, a.s., namely $T$ has sample paths in
${\rm Lip}(\slp)$ a.s.

Finally, a process $\X$ is constructed.
By the above result, the sample paths of $T$ are a.e.\ differentiable,
with derivatives lying in $\slp$ a.e.
To construct the a.e.\ derivative as an $\{\calF'_t\}$-progressively measurable
process we adopt an argument from a proof in \cite{DM}.
Recall that the lexicographic order on $\R^{\calJ}$
is complete. That is, for every $A\subset\R^{\calJ}$ there exists
a (unique) greatest lower bound with respect to this order,
denoted here $\widetilde{\inf}A$. We can then construct
$$
\X_t=\lim_{\eps\downarrow0}\tildeinf_{s\in(t-\eps,t)\cap\Q}(t-s)^{-1}(T_t-T_s),
$$
where $\Q$ is the set of rationals.
Similarly to the proof in \cite[IV.17]{DM}, $\X$ is progressively measurable.
Moreover, one has a.s., for all $t$, $\int_0^t\X_sds=T_t$.
This gives $\X\in\frX(\{\calF'_t\})$.
\end{proof}

\begin{proof}[Proof of Lemma \ref{lem03}.]
We introduce notation special to this proof.
We fix $n$ and remove the superscript $n$ from the notation.
Write
$S_j$ for each of the $I+J$ processes originally denoted by $A^n_i$ and $S^n_{j}$.
Further, in place of $\bar T(t)$ (originally $\bar T^n(t)$)
we write $T(t)$.
There will be no confusion with the processes that are elsewhere denoted by
$S,T$, etc., because this proof regards the prelimit processes only.

The definition of an admissible control for the QCP
requires adaptedness of $T^n_t$ to $\calF^n_t$ (denoted here
as $T_t$ and $\calF_t$).
To ease the notation we first present the proof for simplified versions
of both $\calF_t$ and $\calG_u$ that do not include $\Ups$
as basis elements for the sigma algebras.
That is, for $\calF_t$ and $\calG_u$ we take
\[
\calF_t=\sig\{S(T_s):0\les s\les t\},
\qquad
t\in\R_+,
\]
\[
\calG_u=\sig\{S(v):0\LE v \LE u\},\qquad u\in\calU.
\]
Thus under the assumption that $T_t\in\calF_t$ for all $t$,
the aim is to show that $\{T_t\LE u\}\in\calG_u$ for all $t$ and $u$.
Only minor modifications are required to account for $\Ups$,
and these are described at the end of the proof.

Some further notation is as follows.
For $j\in \calJ^0$, let $\tau_j(0)=0$ and let the jump times of $S_j$
be denoted by $\tau_j(p)=\inf\{t:S_j(t)\ges p\}$, $p\in\N$.
By assumption, $\tau_j(0)<\tau_j(1)<\cdots$.
The vector $(\tau_j(1),\ldots,\tau_j(p))$ is denoted by
$\tau_j^{\leqslant p}$, where in the case $p=0$ this is set to the empty set.
For $N\in\Z_+^{\calJ^0}$, the collection $(\tau_j^{\leqslant N_j}: j\in \calJ^0)$ is
denoted by $\tau^{\LE N}$.
Next, let $\eta(0)=0$ and for $p\ges 1$, let
\[
\eta(p)=\inf\{t>\eta(p-1): \text{ for some $j\in\calJ^0$, } S_j(T_j(t))>S_j(T_j(\eta(p-1)))\}.
\]
For $p\in\Z_+$, let $N(p)$ denote the $\Z_+^{\calJ^0}$-valued RVs $N(p)=S(T(\eta(p)))$.
Then the sequence $0=\eta(0)<\eta(1)<\cdots$ are
the jump times of $\{S_j(T_j(t)),j\in\calJ^0\}$, and $N(p)$ the values
of $S(T(t))$ after each jump.
(Note that it is possible that at $\eta(p)$ multiple
trajectories $S_j(T_j(\cdot))$ jump simultaneously).
Also let $K_t=\max\{p:\eta(p)\les t\}$ denote the number of jumps by time~$t$.

It is not hard to see that, given $t$, the trajectory
$S(T(s))$, $s\in[0,t]$ can be recovered from $K_t$ and the collection
$\{(\eta(p\w K_t),N(p\w K_t),p\in\N\}$, and vice versa. As a result, we have the identity
\[
\calF_t=\sig\{K_t, \eta(p\w K_t), N(p\w K_t), p\in\N\}.
\]
Similarly,
for any $u\in\calU$, one can recover the trajectory $\{S(v):v\LE u\}$
from the collection $\{S(u),\tau_j(p\w S_j(u_j)),p\in\N,j\in\calJ^0\}$ and vice versa. Hence
we also have the identity
\begin{equation}\label{091}
	\calG_u=\sig\{S(u),\tau_j(p\w S_j(u_j)),p\in\N,j\in\calJ^0\}.
\end{equation}

We use Theorem I.18 in \cite{DM}, which states the following.
If $X$ is an RV on $(\Om,\calF)$
taking values in a measurable space $(E,\calE)$ and $Y$ an RV on $(\Om,\calF)$
with values in $(\calU,\calB_{\calU})$
which is $\sig(X)$-measurable then there exists a measurable map $\ph$
from $(E,\calE)$ to $(\calU,\calB_{\calU})$ such that $Y=\ph\circ X$.
Considering $E=\Z_+\times\R_+^\N\times(\Z_+^{\calJ^0})^\N$, endow $\Z_+$ and $\R_+$
with the Borel $\sig$-fields and let $\calE$ be the corresponding product $\sig$-field.
Then for every $t$ there exists a measurable map $\ph_t:
(E,\calE)\to(\calU,\calB_{\calU})$ such that
\[
T_t=\ph_t(K_t,\eta(p\w K_t), N(p\w K_t), p\in\N).
\]
For $\kappa\in\Z_+$ and $x=(\zeta,n)\in \R_+^\kappa\times(\Z_+^{\calJ^0})^\kappa$, denote
\[
\ph_t^{(\kappa)}(x)=\ph^{(\kappa)}_t(\zeta^{\les\kappa},n^{\les\kappa})
=\ph_t(\kappa,\zeta(p\w \kappa),n(p\w \kappa),p\in\N).
\]
Then we have
\begin{equation}\label{120}
	T_t=\ph_t^{(\kappa)}(\eta^{\les\kappa},N^{\les\kappa})
	\quad
	\text{holds on the event}
	\quad
	\{K_t=\kappa\}.
\end{equation}

Two properties of $\ph_t^{(\kappa)}$ can immediately be deduced
from properties of $T_t$. Namely,
the continuity and the monotonicity (w.r.t.\ $\LE$)
of the sample paths of $t\mapsto T_t$ imply that, for every $\kappa$ and $x$,
the map $t\mapsto\ph_t^{(\kappa)}(x)$ must also be continuous and monotone.
Another useful fact is as follows.
By the definitions of $\eta(p)$ and $\tau_j(p)$,
\begin{equation}\label{106}
	\eta(p+1)=\inf\{s>\eta(p): \text{ for some $j\in\calJ^0$,
		$T_j(s)\ges\tau_j(N_j(p)+1)$}\}.
\end{equation}
Hence using \eqref{120} and the aforementioned monotonicity,
\begin{equation}\label{102}
	\eta(p+1)=\inf\{s>\eta(p): \text{ for some $j\in \calJ^0$,
		$\ph_{j,s}^{(p)}(\eta^{\les p},N^{\les p})\ges\tau_j(N_j(p)+1)$}\}.
\end{equation}

Equipped with the maps $\ph^{(\kappa)}$ and these properties
we proceed with the following steps. Fix $u$ and $t$.

- Construct a process $\{\tilde T_s$, $s\in\R_+\}$
for which the jump times
are defined similarly to \eqref{102}, except that they
are based solely on the RVs $\tau^{\LE S(u)}$.
From its construction it will be immediate that
$\{\tilde T_t\LE u\}$ is $\calG_u$-measurable.

- Prove that $T_t\LE u$ iff $\tilde T_t\LE u$.

- Deduce that $\{T_t\LE u\}$ is $\calG_u$-measurable.

The construction of $\tilde T_s$ is as follows.
Recall that $t,u$ have been fixed and denote $\Sig=S(u)$.
Let
$\tilde\eta(0)=0$, $\tilde N(0)=0\in\Z_+^{\calJ^0}$. For $p\in\Z_+$ such that $\tilde\eta(p)<\iy$, define inductively
\begin{align*}
	G_p&=\{j\in \calJ^0:\tilde N_j(p)+1\les \Sig_j\},
	\\
	J^p(s)&=\{j\in G_p:\ph_{j,s}^{(p)}(\tilde\eta^{\les p},\tilde N^{\les p})
	\ges\tau_j(\tilde N_j(p)+1)\},
	\qquad s>\tilde\eta(p),
	\\
	\tilde\eta(p+1)&=\inf\{s>\tilde\eta(p): J^p(s)\ne\emptyset\}
	\\
	\tilde N(p+1)&=\tilde N(p)
	+\sum_{j\in J^p(\tilde\eta(p+1))}e_j
	\qquad \text{provided $\tilde\eta(p+1)<\iy$}.
\end{align*}
Note that after at most $|\Sig|$ steps, $G_p$ becomes empty, and as
a result $\tilde\eta(p_0+1)=\iy$ for some finite $p_0$. Based on the above construction,
define, for $0\les p\les p_0$,
\[
\tilde T_s=\ph_s^{(p)}(\tilde\eta^{\les p},\tilde N^{\les p}),
\qquad \text{for $s\in[\tilde\eta(p),\tilde\eta(p+1))$}.
\]
This defines $\tilde T_s$ for all $s\in\R_+$.

If instead of $G_p$ we had used $\calJ^0$ in the definition of $\tilde\eta(p+1)$,
we would have recovered $T_s$ itself as can be seen by \eqref{102}.
The use of $G_p$ achieves the goal that only the RVs $\Sig=S(u)$
and $\tau^{\LE\Sig}$ are involved in the construction.
As a result, the whole process $\{\tilde T_s, s\in\R_+\}$
is measurable on $\sig\{S(u),\tau^{\LE S(u)}\}$, hence by \eqref{091},
it is measurable on $\calG_u$. Consequently, the event $\{\tilde T_t\LE u\}$
is $\calG_u$-measurable.

The proof that $T_t\LE u$ iff $\tilde T_t\LE u$ proceeds in two steps.
First, assume
\begin{equation}\label{107}
	T_t\LE u.
\end{equation}
It is argued by induction that, for
$p\in\Z_+$,
\begin{equation}\label{101}
	\begin{split}
		&\text{if $\tilde\eta(p)\w\eta(p)\les t$ then}\\
		&
		\text{(a) $(\tilde\eta^{\les p},\tilde N^{\les p})=(\eta^{\les p},N^{\les p})$,}
		\\
		&\text{(b) for
			$s\in[\eta(p),\tilde\eta(p+1)\w\eta(p+1))$ one has $\tilde T_s=T_s$}.
	\end{split}
\end{equation}
In particular, \eqref{101} gives $\tilde T_s=T_s$,
$s\in[0,t]$, hence $\tilde T_t\LE u$.

Before proving \eqref{101}, notice that,
for every $k$, \eqref{101}(a) implies \eqref{101}(b),
because by \eqref{120} and the definition of $\tilde T_s$,
the processes $T_s$ on the interval $[\eta(p),\eta(p+1))$
and $\tilde T_s$ on the  interval $[\tilde\eta(p),\tilde\eta(p+1))$
are given by the same
formula, and since $\eta(p)=\tilde\eta(p)$, they must be equal for all
$s\in[\eta(p),\tilde\eta(p+1)\w\eta(p+1))$.
Therefore it suffices to prove \eqref{101}(a).
Also note that, by the definition of $\Sig$,
\begin{equation}\label{104}
	\tau_j(\Sig_j)\les u_j<\tau_j(\Sig_j+1),\qquad j\in\calJ^0.
\end{equation}

We now prove \eqref{101}(a).
First, for $p=0$, $(\tilde\eta(0),\tilde N(0))=(\eta(0),N(0))$ by definition.
Next, assume \eqref{101}(a) holds for $p$ and consider $p+1$.
Thus, under the condition
\[
\tilde\eta(p+1)\w\eta(p+1)\les t
\]
we must show
\begin{equation}\label{105}
	(\tilde\eta(p+1),\tilde N(p+1))=(\eta(p+1),N(p+1)).
\end{equation}
By the induction assumption,
$T_s=\tilde T_s=\ph_s^{(p)}(\tilde\eta^{\les p},\tilde N^{\les p})$
for $s\in[\eta(p),\tilde\eta(p+1)\w\eta(p+1))$.
Comparing \eqref{102} to the definition of $\tilde\eta(p+1)$
it is seen that $\eta(p+1)\les\tilde\eta(p+1)$
because $G_p\subset\calJ^0$. Thus
\begin{equation}\label{103}
	\eta(p+1)\les t.
\end{equation}
Arguing by contradiction, assume that \eqref{105} is false.
Then either (a) $\eta(p+1)<\tilde\eta(p+1)$ or
(b) $\eta(p+1)=\tilde\eta(p+1)$ but $N(p+1)\ne \tilde N(p+1)$.
In both cases,
by \eqref{106}, there exists $j\notin G_p$
such that one has $T_j(\eta(p+1))=\tau_j(N_j(p)+1)$.
Since the processes $\tilde T$ and $T$ are continuous and
equal on $[\eta(p),\eta(p+1))$, we have
\begin{equation}\label{108}
	\tilde T_j(\eta(p+1))=T_j(\eta(p+1))=\tau_j(N_j(p)+1).
\end{equation}
By the induction assumption, $\tilde N_j(p)=N_j(p)$,
hence $T_j(\eta(p+1))=\tau_j(\tilde N_j(p)+1)$.
Also, since $j\notin G_p$, $\tilde N_j(p)+1>\Sig_j$.
That is, $\tilde N_j(p)+1\ges\Sig_j+1$.
We therefore obtain
\begin{align*}
	\tau_j(\Sig_j+1)&\les
	\tau_j(\tilde N_j(p)+1)\\
	&=T_j(\eta(p+1)) &\text{by \eqref{108}}
	\\
	&\les T_j(t) &\text{by \eqref{103}}
	\\
	&\les u_j, &\text{by \eqref{107}}
	\\
	&<\tau_j(\Sig_j+1), &\text{by \eqref{104}}
\end{align*}
a contradiction. This shows that \eqref{101} holds under the assumption
$T_t\LE u$. Consequently, $T_t\LE u$ implies $\tilde T_t\LE u$.

Next assume
\begin{equation}\label{109}
	\tilde T_t\LE u.
\end{equation}
Under this condition we also prove that \eqref{101} holds.
The proof follows the same lines as above, except the last chain
of inequalities, that now reads
\begin{align*}
	\tau_j(\Sig_j+1)&\les
	\tau_j(\tilde N_j(p)+1)\\
	&=\tilde T_j(\eta(p+1)) &\text{by \eqref{108}}
	\\
	&\les \tilde T_j(t) &\text{by \eqref{103}}
	\\
	&\les u_j, &\text{by \eqref{109}}
	\\
	&<\tau_j(\Sig_j+1), &\text{by \eqref{104}}
\end{align*}
that again gives a contradiction, establishing \eqref{101}. As a result,
$\tilde T$ and $T$ are equal
on $[0,t]$, and thus $T_t\LE u$. This completes the proof that
$\tilde T_t\LE u$ iff $T_t\LE u$.
We deduce that $\{T_t\LE u\}$ is $\calG_u$-measurable.

Finally, accounting for the RV $\Ups$ by adding it to the definitions
of both $\calF_t$ and $\calG_u$, as in 
$\calF_t=\sig\{\Ups,S(T_s):0\les s\les t\}$, presents no difficulty.
Apart from applying this change,
augmenting the measurable space $(E,\calE)$ and updating the functions
$\ph^{(\kappa)}$ in an obvious way, the proof requires no adaptation.
\end{proof}

\begin{proof}[Proof of Lemma \ref{lem:bcp2wcp}.]
We keep the convention of writing $A_i$ as $S_{(i,0)}$.
However, unlike in the proof of the previous lemma,
$S^n$ (and similarly, $\bar T^n$, $\calG^n_u$, etc.)
and $S$ are distinct objects.

1. This part is a direct consequence of the fact that $\bar T_t$
is a $\calG_u$-stopping time for every $t$, and $\calH_t=\calG_{\bar T_t}$.
Indeed, to show that $\bar T_t$ is $\calH_t$-measurable, it suffices to show that
for every $v\in\calU$, $\{\bar T_t\LE v\}\in\calH_t$. But
$\calH_t=\{G\in\calG_0:G\cap\{\bar T_t\LE u\}\in\calG_u\}$
and $\{\bar T_t\LE v\}\cap\{\bar T_t\LE u\}=\{\bar T_t\LE v\w u\}$, where $v\w u$ denotes
componentwise minimum. Since the latter event is in $\calG_u$, statement 1
is proved.\vspace{0.3cm}

2. We first prove the independence
\begin{equation}\label{157}
	\calG_u \indep S(y)-S(x), \quad \text{for all } 0\LE u\LE x\LE y\in\calU.
\end{equation}
By the continuity of the sample paths of $S$,
it suffices to show this independence under the additional restriction
that for all $j\in \calJ^0$, $u_j<x_j$. Fix such $u,x,y$.
Denote $u_\eps=u+\eps\mathbf{e}$,  $\eps>0$.
Fix $\eps\in(0,1)$ so small that $u_{\eps}\LE x$.
Denote $\Del=S(y)-S(x)$ and $\Del^n=\hat S^n(y)-\hat S^n(x)$.

The proof of \eqref{157} is based on two facts.
First, one can find a sequence of RVs $\tilde\Del^n$
such that $\Del^n-\tilde\Del^n\to0$ in probability and at the same time,
for every $n$,
\begin{equation}\label{156}
	(\Ups,\{S^n(v):v\LE u_{\eps}\}) \indep \tilde\Del^n.
\end{equation}
Second, there exists a continuous approximation of the function
$\psi^{(u)}$ with certain properties.
More precisely, there exist three functions mapping
$\calU$ to itself, denoted
$\psi^{(u)}_{\rm cts}$, $g_1$, $g_2$,
where $\psi^{(u)}_{\rm cts}$ is continuous and $g_1$, $g_2$ are Borel,
such that
\begin{equation}\label{160}
	\psi^{(u)}_{\rm cts}=g_1\circ\psi^{(u_\eps)},
	\qquad
	\psi^{(u)}=g_2\circ\psi^{(u)}_{\rm cts}.
\end{equation}

We first show \eqref{157} based on these facts and then
prove these facts. To this end, let us show that,
for every $t$, the RV $\psi^{(u)}_{\rm cts}(\bar T^n_t)$ is measurable on $\calG^n_{u_{\eps}}$.
By the first part of \eqref{160}, it suffices to show that
$\psi^{(u_\eps)}(\bar T^n_t)$ is $\calG^n_{u_\eps}$-measurable.
To show this we use the following notation.
For $v\in\calU$ denote $R_v=\{w\in\calU:w\LE v\}$ and $R_v^c$ its complement in $\calU$.
Because the Borel $\sig$-field on $\calU$ is generated by
the rectangles $R_v$, $v\in\calU$, it suffices to show that
\[
\eta(n,t,u_\eps,v):=\{\psi^{(u_\eps)}(\bar T^n_t)\in R_v\}\in\calG^n_{u_\eps}
\]
for all $v\in\calU$.
By Lemma \ref{lem03}, for $v\in R_{\eps}$ one has
$\{\bar T^n_t\LE v\}\in\calG^n_v\subset\calG^n_{u_{\eps}}$,
where the fact that $\calG^n_\cdot$ forms a filtration is used.
Now, by the definition of $\psi^{(u_\eps)}$, we can write
\begin{align*}
	\eta(n,t,u_\eps,v)&=
	\begin{cases}
		\{\bar T^n_t\in R_v\} & \text{on } \{\bar T^n_t\in R_{u_\eps}\}
		\\
		\{u_\eps+\mathbf{e}\in R_v\} & \text{on } \{\bar T^n_t\in R_{u_\eps}\}^c
	\end{cases}
	\\
	&=\{\bar T^n_t\LE v\w u_\eps\}
	\cup [\{u_\eps+\mathbf{e}\LE v\}\cap\{\bar T^n_t\LE u_\eps\}^c].
\end{align*}
Since $v\w u_\eps\LE u_\eps$, the above expression shows that
$\eta(n,t,u_\eps,v)\in\calG^n_{u_\eps}$.
Hence follows the claim that $\psi^{(u)}_{\rm cts}(\bar T^n_t)$
is $\calG^n_{u_\eps}$-measurable.

Using this in \eqref{156} and recalling that $\hat S^n$ is a normalization
of $S^n$ gives
\begin{equation}\label{159}
	(\Ups,\{\hat S^n(v):v\LE u_{\eps}\},\{\psi^{(u)}_{\rm cts}(\bar T^n_t),t\ges0\})
	\indep \tilde\Del^n.
\end{equation}
We can now take limits. By Lemma \ref{lem11},
\[
(\{\hat S^n(v):v\LE u_{\eps}\},\{\bar T^n_t: t\ges0\},\Del^n)
\To(\{S(v):v\LE u_{\eps}\},\{\bar T_t: t\ges0\},\Del).
\]
Thus by the continuity of $\psi^{(u)}_{\rm cts}$
and the fact $\Del^n-\tilde\Del^n\to0$,
\[
(\{\hat S^n(v):v\LE u_{\eps}\},\{\psi^{(u)}_{\rm cts}(\bar T^n_t): t\ges0\},\tilde\Del^n)
\To(\{S(v):v\LE u_{\eps}\},\{\psi^{(u)}_{\rm cts}(\bar T_t): t\ges0\},\Del).
\]
By the independence stated in \eqref{159}, this shows
\[
(\{S(v):v\LE u_{\eps}\},\{\psi^{(u)}_{\rm cts}(\bar T_t): t\ges0\})
\indep \Del.
\]
We can now use the second part of \eqref{160} to obtain
\[
(\{S(v):v\LE u\},\{\psi^{(u)}(\bar T_t): t\ges0\}) \indep \Del.
\]
By the definition of $\calG_u$ in \eqref{155},
this shows that $\calG_u\indep\Del$, as stated in \eqref{157}.

To complete the proof of \eqref{157}, the two aforementioned facts
need to be established.
To construct $\tilde\Del^n$, let
$$
\tau_{j}^n\coloneqq \inf\left\lbrace z> v_{j},  S^n_{j}(z)> S^n_{j}(x_{j})\right\rbrace,
\qquad j\in\calJ^0.
$$
Let $(\tau^n)=(\tau^n_j)_{j\in\calJ^0}$ and $\tilde\Del^n=\hat S^n(\tau^n+y-x)-\hat S^n(\tau^n)$,
which, according to our convention, is a shorthand notation for
$\tilde\Del^n=(\tilde\Del^n_j)_{j\in\calJ^0}$,
$\tilde\Del^n_j=\hat S^n_j(\tau^n_j+y_j-x_j)-\hat S^n_j(\tau^n_j)$.
Since $\lambda_i^n$ and $\mu_{j}^n$ grow to $\infty$, it is easy to see
that $\tau^n_j\to x_i$ in probability.
Because $\hat S^n$ are $C$-tight, this implies the first desired property
of $\tilde\Del^n$, namely $\Del^n-\tilde\Del^n\to0$ in probability.
To prove the second property of $\tilde\Del^n$, \eqref{156}, due to the independence
of $\Ups$ and $S^n$, it suffices to show that
\begin{equation}\label{eq:indepincr}
	S^n(v) \indep \bar W^n:=S^n(\tau^n+y-x)-S^n(\tau^n),
\end{equation}
for every $v\LE u_\eps$. To this end we introduce some notation.
The accelerated interarrival and service times denoted in \eqref{e01} by $a^n_{i}(p)$ and $u^n_{j}(p)$
will now be denoted $u^n_j(p)$, $j\in\calJ^0$, $p=1,2,\ldots$, in accordance with the relabeling
convention used in this section. For $l\in \Z_+^{\calJ^0}$, introduce the multi-parameter filtration
\[
\bar\calF^n_{l}=\sig\{u^n_j(p_j),p_j\le l_j+1,j\in\calJ^0\}.
\]
Then it is easy to see that $\calT^n:=S^n(x)$ is
a (multi-parameter) stopping time w.r.t.\ $\bar\calF^n_\cdot$.
We now apply Lemma 7.6 of \cite{bw1}, which states that
the conditional distribution of the sequence
$(u^n_j(\calT^n_j+p), j\in\calJ^0)_{p=2,3,\ldots}$
given the stopped $\sig$-field $\bar\Sig^n:=\bar\calF^n_{\calT^n}$ is the same as the (unconditioned)
distribution of the original family of i.i.d.\ random variables
$(u^n_j(p), j\in\calJ^0)_{p=1,2,\ldots}$. (The lemma in \cite{bw1} and its proof are concerned
with the special case $|\calJ^0|=3$ but are valid in general).
Now, for each $j\in\calJ^0$, the RV $\bar W^n_j=S^n_j(\tau^n_j+y_j-x_j)-S^n_j(\tau^n_j)$
is determined by the sequence $(u^n_j(\calT^n_j+p), p=2,3,\ldots)$.
Therefore the conditional distribution of $\bar W^n$ given $\bar\Sig^n$ is the same as
the distribution of $S^n(y-x)$.
On the other hand, for each $v\LE x$, (in particular for each $v\LE u_\eps$),
the RV $S^n(v)$ is measurable w.r.t.\ $\bar\Sig^n$.
As a consequence, \eqref{eq:indepincr} holds.

Next we describe the construction of $\psi^{(u)}_{\rm cts}$.
For $v\in\calU$, set
\[
\psi^{(u)}_{\rm cts}(v)=(\psi^{(u)}_{{\rm cts},j}(v)),
\qquad \psi^{(u)}_{{\rm cts},j}(v)=(v_j+\eps^{-1}\max_g(v_g-u_g)^+)\w(u_j+1).
\]
Note that
\[
\psi^{(u)}_{\rm cts}=\psi^{(u)}_{\rm cts}\circ\psi^{(u_\eps)},
\]
because in $R_{u_\eps}$,
the function $\psi^{(u_\eps)}$ agrees with the identity map,
and if $v\in R^c_{u_\eps}$ then
$\psi^{(u)}_{\rm cts}(v)=u+\mathbf{e}$ whereas
\[
\psi^{(u)}_{\rm cts}\circ\psi^{(u_\eps)}(v)
=\psi^{(u)}_{\rm cts}(u_\eps+\mathbf{e})=u+\mathbf{e}.
\]
Also,
\[
\psi^{(u)}=\psi^{(u)}\circ\psi^{(u)}_{\rm cts}
\]
holds, because in $R_u$, both sides of the equality agree with
the identity map, and any member of $R^c_u$ is sent by
$\psi^{(u)}_{\rm cts}$ to a member of $R^c_u$,
resulting in $u+\mathbf{e}$ on both sides of the equation.
These two identities validate \eqref{160}.
This completes the proof of the first part of statement 2.

As for the second part of this statement, by the definition of $\calG_u$ in \eqref{155},
for any $t$, $\bar T_t$ is a $\mathcal{G}_u$ stopping time, and $\bar T_t$
has non-decreasing continuous sample paths a.s. Moreover, $S^\text{unit}$
is a SBM. Thus to show that the hypotheses of Theorem \ref{th:kurtz}
hold, it remains to show that for every subset $\calM\subset\calJ^0$, $\Ph^\calM_\theta(u)$ of \eqref{154},
with $S^\text{unit}$ in place of $\tilde S$, is a $\calG_u$-martingale.
To this end, note by \eqref{157} that for $u,v\in\calU$,
$S^\text{unit}(u+v)-S^\text{unit}(u)$ is independent of $\calG_u$. Hence, recalling $\calM\subset\calJ^0$ and 
$$
\Ph^{\calM}_\theta(u)=\prod\limits_{j\in\calM} \exp\left(\mathbf{i}\theta_{j}S^\text{\rm unit}_{j}(u_{j})
+\textstyle\frac{1}{2}\theta_{j}^2u_{j} \right),
$$
we have
\begin{align*}
	\E\left[\Ph^{\calM}_\theta(u+v)\mid \mathcal{G}_u\right]
	&=\Ph^{\calM}_\theta(u)\,\E\prod\limits_{j\in \calM} \exp\left\{\mathbf{i}\theta_{j} (S^\text{\rm unit}_{j}(u_{j}
	+v_{j})-S^\text{\rm unit}_{j}(u_{j}))+\textstyle\frac{1}{2}\theta_{j}^2v_{j} \right\}
	\\
	&=\Ph^{\calM}_\theta(u),
\end{align*}
where the last identity follows from the fact that $S^\text{\rm unit}_{j}(u_{j}
+v_{j})-S^\text{\rm unit}_{j}(u_{j})\sim\calN(0,{\rm diag}(v_j))$.
This shows that the tuple satisfies the hypotheses of Theorem \ref{th:kurtz}
and completes the proof of statement 2.\vspace{0.3cm}

3 and 4. We deduce from Theorem \ref{th:kurtz}
that $D_j=S_j\circ \bar T_j$, $j\in\calJ^0$ are continuous
local martingales w.r.t.\ $\calH_t$.
Moreover, for every distinct $j,j'\in\calJ^0$, $D_jD_{j'}$ is also
a continuous local martingale. Because the process $\lan D_j,D_{j'}\ran$,
by its definition, is the unique predictable process starting at zero which
makes $D_jD_{j'}-\lan D_j,D_{j'}\ran$ a local martingale, it follows that
$\lan D_j,D_{j'}\ran=0$.\vspace{0.3cm}

5. The proof is based on Proposition V.1.5(i) of \cite{rev-yor},
according to which if there exists a single-parameter filtration
with respect to which, for every $t$ and $j$, $\bar T_j(t)$ is a stopping time
and $S_j$ a martingale,
then $\langle S_{j}\circ\bar T_{j}\rangle_t=\langle S_{j}\rangle_{\bar T_{j}(t)}$.

Toward finding such a filtration, we argue as follows.
It suffices to obtain the result on a finite time interval $t\in[0,c_1]$, where
$c_1$ is arbitrary. Fix $c_1$ and let $c_2>c_1$. Fix $j^0$.
For $j\in\calJ^0$, let $e_j\in\calU$ denote the unit vector whose
$p$-th component is $\one_{p=j}$.
Consider the single-parameter filtration
$\calG^{(j^0)}_b$, $b\in[0,c_1]$ defined in terms of $\calG_u$ as
\[
\calG^{(j^0)}_b=\calG_{u(j^0,b)}
\qquad
u(j^0,b)=be_{j^0}+c_2\sum_{j\ne j^0}e_j,
\qquad
b\in[0,c_1].
\]
We use the fact that for all $t\in[0,c_1]$ and $j\in \calJ^0$, $\bar T_j(t)<c_2$.
Recall that $\bar T_t$ is a (multi-parameter) stopping time on
$\calG_u$. Thus for $t\in[0,c_1]$ and $b\in[0,c_1]$,
\[
\{\bar T_{j^0}(t)\le b\}=\{\bar T_{j^0}(t)\le b,\bar T_j(t)\le c_2 \text{ for all } j\ne j^0\}
=\{\bar T_t\LE u(j^0,b)\}\in\calG_{u(j^0,b)}=\calG^{(j^0)}_b.
\]
Hence $\bar T_{j^0}(t)$ is a $\{\calG^{({j^0})}_b\}$-stopping time for every $t\in[0,c_1]$.
Moreover, using \eqref{157} with $x=u=u(j^0,a)$ and $y=u(j^0,b)$, where
$0\le a\le b\le c_1$, shows that $\calG^{(j^0)}_a\indep S_{j^0}(b)-S_{j^0}(a)$, so
that the (zero drift) BM $S_{j^0}$ is a martingale on this filtration.
Clearly, $t\mapsto\bar T_{j^0}(t)$ is a.s.\ continuous non-decreasing. Hence
the assumptions of Proposition V.1.5(i) of \cite{rev-yor} are verified,
and so $\langle S_{j^0}\circ \bar T_{j^0}\rangle_t=\langle S_{j^0}\rangle_{\bar T_{j^0}(t)}=\sigma^2_{j^0}\bar T_{j^0}(t)$
for all $j^0\in\calJ^0$.
\end{proof}

%
%
\begin{appendix}
	\section{Some useful linear programming results}

For ease of reference we reproduce here several results related to linear programming from Chapter 7 of \cite{schrijver}.
We begin with the definitions of a convex polyhedron and a convex polytope in Section 7.2 of \cite{schrijver}.

\begin{definition}\label{def:con-poly}
(i) A set $P$ of vectors in $\R^{m}$ is called a convex polyhedron if $P=\left\lbrace x \mid Ax\leqslant b\right\rbrace$
for some matrix $A\in \R^{n\times m}$ and column vector $b\in \R^{n}$ .
\\
(ii) A set of vectors is a convex polytope if it is the convex hull of finitely many vectors.
\end{definition}

The following lemma is Corollary 7.1c in \cite{schrijver}.

\begin{lemma}\label{lem:polytope}
A set $P$ is a convex polytope if and only if $P$ is a bounded convex polyhedron.
\end{lemma}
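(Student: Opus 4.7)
The plan is to reduce this classical fact (the Minkowski--Weyl theorem for polytopes) to its conic analog, which asserts that a convex cone in $\R^{m+1}$ is finitely generated if and only if it is the intersection of finitely many closed half-spaces through the origin. The passage between $\R^{m}$ and $\R^{m+1}$ is done by the standard homogenization trick of slicing a cone at the hyperplane $\{t=1\}$.

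For the ``polytope $\Rightarrow$ bounded polyhedron'' direction, write $P=\mathrm{conv}\{v_{1},\ldots,v_{N}\}$ and form the cone $C=\mathrm{cone}\{(v_{i},1) : 1\le i\le N\}\subset\R^{m+1}$. The generating-to-inequality half of the conic theorem produces a matrix $M$ with $C=\{y : My\le 0\}$. Intersecting with $\{t=1\}$ and reading off the first $m$ coordinates exhibits $P$ as $\{x : M\binom{x}{1}\le 0\}$, a polyhedron; boundedness is immediate from $P\subseteq \{x : \|x\|\le \max_{i}\|v_{i}\|\}$.

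For the converse, given a bounded $P=\{x : Ax\le b\}$, form $C=\{(x,t) : Ax-tb\le 0,\ t\ge 0\}$, which is an intersection of finitely many closed half-spaces. The inequality-to-generating half of the conic theorem yields $C=\mathrm{cone}\{(w_{j},s_{j}) : 1\le j\le M\}$ with $s_{j}\ge 0$. Any generator with $s_{j}=0$ would give a nonzero $w_{j}$ with $Aw_{j}\le 0$, producing a half-line $\{x_{0}+\lambda w_{j} : \lambda\ge 0\}\subseteq P$ for any fixed $x_{0}\in P$, contradicting boundedness. Hence every $s_{j}>0$, and after rescaling generators to the form $(v_{j},1)$, slicing at $t=1$ gives $P=\mathrm{conv}\{v_{1},\ldots,v_{M}\}$.

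The main obstacle is the conic Minkowski--Weyl theorem itself. I would prove it by Fourier--Motzkin elimination: starting from a finite system $\{My\le 0\}$, one projects out variables one at a time by replacing each pair of inequalities having opposite-sign coefficients on the eliminated variable with their nonnegative combination, yielding a finite system in the remaining variables. This shows that the projection of a polyhedral cone onto any coordinate subspace is polyhedral. Conversely, a finitely generated cone $\mathrm{cone}\{w_{1},\ldots,w_{N}\}$ is the image of the nonnegative orthant $\R_{+}^{N}$ under the linear map $\lambda\mapsto\sum_{j}\lambda_{j}w_{j}$, hence a coordinate projection of the polyhedral cone $\{(y,\lambda) : y=\sum_{j}\lambda_{j}w_{j},\ \lambda\ge 0\}$ in a larger ambient space; applying Fourier--Motzkin to this description yields the dual half of the conic statement.
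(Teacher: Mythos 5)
The paper itself offers no proof here: this is Corollary~7.1c of Schrijver's \emph{Theory of Linear and Integer Programming}, reproduced in the appendix as a black box, and Schrijver derives it from the decomposition theorem $P=Q+C$ (polytope plus finitely generated characteristic cone). Your homogenization strategy is a standard and perfectly legitimate alternative route, and the two reductions to the conic statement are carried out correctly: $\mathrm{cone}\{(v_i,1)\}$ does slice at $t=1$ to $\mathrm{conv}\{v_i\}$, and $\{(x,t):Ax\le tb,\ t\ge 0\}$ slices at $t=1$ to $\{x:Ax\le b\}$, with boundedness forcing the $t$-component of every nontrivial extreme ray to be positive.

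The gap is in your sketch of the conic Minkowski--Weyl theorem, which only establishes one of the two implications you invoke. Fourier--Motzkin shows that coordinate projections of $H$-cones are $H$-cones; writing a finitely generated cone as the projection of the lifted $H$-cone $\{(y,\lambda):y=\sum_j\lambda_jw_j,\ \lambda\ge 0\}$ then gives $V\!\Rightarrow\!H$, which is exactly what the ``polytope $\Rightarrow$ bounded polyhedron'' direction needs. But the other direction of the lemma calls on ``the inequality-to-generating half,'' i.e.\ $H\!\Rightarrow\!V$, and nothing in the sketch proves it. Your opening sentence (``This shows that the projection of a polyhedral cone onto any coordinate subspace is polyhedral'') establishes a closure property of $H$-cones under projection, which is not the same as exhibiting a finite generating set for an $H$-cone, so the word ``Conversely'' is papering over a missing argument. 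One way to close the gap from where you stand is polarity: for a closed cone $C=\{y:My\le 0\}$, Farkas' lemma (itself obtainable from Fourier--Motzkin) gives $C^{\circ}=\mathrm{cone}(\text{rows of }M)$, a $V$-cone; by your proven direction $C^{\circ}$ is an $H$-cone $\{z:Nz\le 0\}$; then $C=C^{\circ\circ}=\mathrm{cone}(\text{rows of }N)$ by Farkas again. Alternatively one proves $H\!\Rightarrow\!V$ directly by induction on the number of defining inequalities. Two small additional points: the ``ray in $P$'' argument presupposes a base point $x_0\in P$, so $P=\emptyset$ should be dispatched separately; and the claim that a generator with $s_j=0$ has $w_j\ne 0$ requires first discarding the trivial generator $(0,0)$.
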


For the remainder of this appendix we let
$A\in \R^{n\times m}$ be a matrix, $b\in \R^{n}$ be a column vector and $c\in \R^{m}$ be a row vector.  
We next state what is commonly referred to as the duality theorem of linear programming, which is Corollary 7.1g of \cite{schrijver}.

\begin{theorem}\label{th:duality}
We have 
\begin{equation}\label{eq:lpduality}
	\max_{x\in \R^m}\left\lbrace cx \mid Ax\leqslant b\right\rbrace=\min_{y\in \R^n}\left\lbrace yb \mid yA=c,\, y\geqslant 0\right\rbrace.
	\end{equation}
provided that both sets in \eqref{eq:lpduality} are non-empty.
\end{theorem}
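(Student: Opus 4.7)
The plan is to decompose the proof into two parts: \emph{weak duality}, which gives the inequality $\max\leqslant\min$ and is a one-line calculation, and \emph{strong duality}, which gives the reverse inequality and is the substantive step, requiring a Farkas-type theorem of the alternative.

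For weak duality I would proceed directly: let $x$ be primal feasible (so $Ax\leqslant b$) and $y$ be dual feasible (so $y\geqslant 0$ and $yA=c$). Then
\begin{equation*}
cx=(yA)x=y(Ax)\leqslant yb,
\end{equation*}
where the inequality uses $y\geqslant 0$ together with the componentwise inequality $Ax\leqslant b$. Taking the supremum over primal-feasible $x$ on the left and the infimum over dual-feasible $y$ on the right yields $\max\{cx:Ax\leqslant b\}\leqslant\min\{yb:yA=c,\,y\geqslant 0\}$. In particular, under the non-emptiness hypothesis, the primal supremum is finite (bounded above by any dual-feasible objective value) and, being the supremum of a linear functional over a non-empty polyhedron on which the objective is bounded, is attained at some $x^*$; set $\gamma^*=cx^*$.

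For strong duality the task is to exhibit a dual-feasible $y^*$ with $y^*b=\gamma^*$. Here I would invoke Farkas' Lemma in its affine form: the system
\begin{equation*}
yA=c,\qquad y\geqslant 0,\qquad yb\leqslant \gamma^*
\end{equation*}
has a solution if and only if for every $x$ satisfying $Ax\leqslant b$ one has $cx\leqslant\gamma^*$. The `only if' direction is weak duality; the `if' direction (which is the content I need) is the non-trivial step. By the definition of $\gamma^*$, every primal-feasible $x$ satisfies $cx\leqslant\gamma^*$, so Farkas supplies the desired $y^*$. Combined with the weak-duality bound $y^*b\geqslant\gamma^*$, we get $y^*b=\gamma^*$, establishing equality and showing that the dual infimum is likewise attained.

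The hard part of the argument is Farkas' Lemma itself, since this is where the geometry of polyhedra genuinely enters. I would prove Farkas via one of the standard routes: either Fourier--Motzkin elimination applied to the system $yA=c$, $y\geqslant 0$, $yb\leqslant\gamma^*$ to reduce to a trivial consistency check, or a separating hyperplane argument, separating the point $b$ (or $(b,\gamma^*)$) from the closed convex cone generated by the columns of $A$ (extended appropriately) when the system is infeasible, and then reading off a certificate $x$ violating the primal bound. Both approaches are standard; the Fourier--Motzkin route is entirely elementary but combinatorially heavier, while the separating-hyperplane route is cleaner provided one has verified the closedness of the relevant finitely generated cone (a classical fact, e.g.\ via the Minkowski--Weyl theorem, which itself can be deduced from Lemma~\ref{lem:polytope} together with the fact that every polyhedral cone is the sum of a linear subspace and a pointed finitely generated cone). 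Since in the present paper this result is invoked as a black box from \cite{schrijver}, I would simply cite the corresponding theorem rather than reproduce Farkas' proof in detail.
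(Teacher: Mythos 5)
The paper does not prove this statement: Theorem~\ref{th:duality} appears in the appendix only as a reproduction of Corollary~7.1g of \cite{schrijver}, invoked as a citation rather than argued. Your sketch is the standard route to LP duality and it is sound: weak duality ($cx=y(Ax)\leqslant yb$ from $y\geqslant 0$, $Ax\leqslant b$), attainment of the primal optimum $\gamma^*$ on the non-empty bounded-objective polyhedron, and then an affine Farkas lemma to produce $y^*\geqslant 0$ with $y^*A=c$ and $y^*b\leqslant\gamma^*$, which weak duality forces to be an equality. This is essentially the proof Schrijver gives (built on Farkas, itself obtainable by Fourier--Motzkin or a separating-hyperplane argument), and since you ultimately defer to \cite{schrijver} for Farkas as a black box, your treatment and the paper's converge. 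One small point worth flagging: the attainment of the primal supremum on an unbounded polyhedron with bounded linear objective is itself a nontrivial fact (it needs a Minkowski--Weyl-type decomposition or a vertex argument), so if you were to expand the sketch into a full proof that step would need a reference or argument of its own, not just Lemma~\ref{lem:polytope} (which covers only the bounded case).
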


Part (iii) of the next lemma is commonly referred to as complementary slackness, and is (35)(iii) in \cite{schrijver}.

\begin{lemma}\label{lem:comp-slack}
Assume that both optima in \eqref{eq:lpduality} are finite, and let $x_0, y_0$ be feasible solutions, respectively, of the maximization
and minimization problems (so that $Ax_0\leqslant b$,  $y_0A=c$, and $y_0 \geqslant 0$).
Then the following are equivalent:
\\
(i) $x_0$ and $y_0$ are optimal solutions in \eqref{eq:lpduality};
\\
(ii) $cx_0 =y_0b$;
\\
(iii) $y_0(b-Ax_0)=0$.
\end{lemma}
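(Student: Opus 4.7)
\textbf{Proof plan for Lemma \ref{lem:comp-slack}.} This is the classical complementary slackness characterization, so the plan is short: derive the `duality gap' identity from feasibility alone, read off (ii) $\Longleftrightarrow$ (iii) immediately, and combine with the strong duality theorem (Theorem \ref{th:duality}) to obtain (i) $\Longleftrightarrow$ (ii). The central observation is that whenever $y_0 A = c$, substitution yields
\begin{equation*}
	y_0 b - c x_0 \;=\; y_0 b - (y_0 A) x_0 \;=\; y_0 (b - A x_0),
\end{equation*}
and the right-hand side is a product of a nonnegative row $y_0 \ges 0$ and a nonnegative column $b - A x_0 \ges 0$, hence a sum of nonnegative reals. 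With this identity in hand, everything else is bookkeeping.

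For the equivalence (ii) $\Longleftrightarrow$ (iii), I would simply observe that a sum of nonnegative reals vanishes iff every summand vanishes. So $c x_0 = y_0 b$ iff the right-hand side of the displayed identity is zero iff $y_0 (b - A x_0) = 0$. No further work is needed.

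For (i) $\Longleftrightarrow$ (ii), I would invoke Theorem \ref{th:duality}: since both optima are finite by hypothesis, the primal maximum and dual minimum coincide at some common value $v^*$. The identity above, applied to arbitrary primal-dual feasible pairs, is precisely weak duality $c x \les y b$; in particular $c x_0 \les v^* \les y_0 b$. Therefore $x_0$ and $y_0$ are jointly optimal (i.e., $c x_0 = v^* = y_0 b$) iff both inequalities collapse to equalities, iff $c x_0 = y_0 b$. The main `obstacle', such as it is, is just recognizing where the finiteness hypothesis enters: it is exactly what lets us apply Theorem \ref{th:duality} to obtain strong duality; everything else reduces to the single algebraic identity above.
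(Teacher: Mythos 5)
The paper does not supply a proof of Lemma \ref{lem:comp-slack}: it is reproduced verbatim in the appendix from Schrijver (as (35)(iii) of Chapter 7), purely for ease of reference, so there is no paper-internal argument to compare against. Your proof is the standard one and is correct: the identity $y_0 b - c x_0 = y_0(b - Ax_0)$, valid under dual feasibility $y_0 A = c$, expresses the duality gap as an inner product of two componentwise-nonnegative vectors, which simultaneously yields weak duality, the equivalence (ii) $\Longleftrightarrow$ (iii), and (via the finiteness hypothesis feeding into Theorem \ref{th:duality}) the equivalence (i) $\Longleftrightarrow$ (ii). You have also correctly located where the finiteness hypothesis is actually used, namely to invoke strong duality so that the sandwich $c x_0 \les v^* \les y_0 b$ is available.
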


Finally, we close with strict complementary slackness, which is (36) in \cite{schrijver}.

\begin{theorem}\label{th:app}
Assume that both both sets in \eqref{eq:lpduality} are non-empty. 
For each $i=1,\ldots, n$, exactly one of the following  holds:
\begin{enumerate}[label=(\alph*)]
	\item\label{i} The maximum in \eqref{eq:lpduality} has a solution $x^0$ such that $(Ax^0)_i<b_i$
	\item\label{ii} The minimum in \eqref{eq:lpduality} has a solution $y^0$ such that $y^0_i>0$.
\end{enumerate}
\end{theorem}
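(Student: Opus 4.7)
\textbf{Proof proposal for Theorem \ref{th:app}.}
I would split the argument into two directions: the two alternatives cannot both hold, and at least one must hold. Mutual exclusivity is immediate from complementary slackness: if a primal optimum $x^0$ satisfies $(Ax^0)_i<b_i$ and a dual optimum $y^0$ satisfies $y^0_i>0$, then Lemma \ref{lem:comp-slack}(iii) gives $y^0(b-Ax^0)=0$, yet every summand is non-negative and the $i$-th is strictly positive, a contradiction.

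For the existence of at least one alternative, fix $i$ and let $v$ denote the common optimal value of \eqref{eq:lpduality}. I would introduce the auxiliary LP
\[
(P_i): \quad \max t \text{ subject to } Ax + t e_i \leq b, \; cx \geq v, \; t \geq 0,
\]
which is feasible (any primal optimum together with $t=0$). If its optimum is either $+\infty$ or a strictly positive number, then any feasible point with $cx \geq v$ must in fact satisfy $cx = v$ by weak duality, so is primal optimal, and the corresponding $(Ax^0)_i \leq b_i - t^* < b_i$ yields alternative (a) directly.

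If the optimum of $(P_i)$ equals $0$, I would appeal to strong LP duality with attainment (both primal and dual are feasible with finite optimum). Assigning non-negative multipliers $y \in \R^n$, $\mu \in \R$, $\nu \in \R$ to the three constraint groups, the dual reads
\[
\min b^T y - v\mu \text{ subject to } A^T y = \mu c^T, \; e_i^T y = 1+\nu, \; y,\mu,\nu \geq 0,
\]
and is attained at some $(y,\mu,\nu)$ with value $0$. If $\mu>0$, then $\tilde y := y/\mu$ satisfies $\tilde y A = c$, $\tilde y \geq 0$ and $b^T\tilde y = v$, so $\tilde y$ is a dual optimum of \eqref{eq:lpduality} with $\tilde y_i \geq 1/\mu > 0$, giving (b). If $\mu=0$, then $y^T A = 0$, $b^T y = 0$ and $y_i \geq 1$: the vector $y$ is a direction along which one can perturb any existing dual optimum $y^*$ of \eqref{eq:lpduality} (which exists by the non-emptiness assumption) without altering feasibility or objective value, so $y^*+y$ is again a dual optimum with $(y^*+y)_i \geq 1 > 0$, yielding (b).

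The main obstacle I anticipate is the case split on $\mu$ in the dual optimum of $(P_i)$: when $\mu=0$, the dual multiplier $y$ does not itself constitute a solution of the original dual, and one must argue that it serves as a ``recession'' direction that can be added to a pre-existing dual optimum. The unbounded case for $(P_i)$ also requires a small separate argument, which could be avoided by bounding $t$ above a priori (e.g.\ $t \leq b_i - \min\{(Ax)_i : x \in X^*\}$ when this is finite), but the case analysis above seems cleaner than introducing such a bound.
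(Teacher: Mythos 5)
The paper does not prove this statement; it simply quotes (36) from \cite{schrijver} for reference, so there is no in-paper proof to compare against. Your argument is a correct, self-contained proof of strict complementary slackness, and it is essentially the standard route: mutual exclusivity from ordinary complementary slackness (Lemma \ref{lem:comp-slack}(iii)), plus an auxiliary LP that maximizes the slack in the $i$-th constraint over primal optima, followed by a dual/Farkas case split. I checked the details: the dual of $(P_i)$ is written correctly, the case $\mu>0$ rescales to a dual optimum $\tilde y=y/\mu$ with $\tilde y_i\ge 1/\mu>0$, and the case $\mu=0$ correctly exhibits $y$ as a recession direction of the dual optimal face (using $yA=0$, $by=0$, $y\ge 0$, $y_i\ge 1$), so $y^*+y$ remains dual optimal. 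One minor point worth being explicit about: in the $t^*=0$ branch you invoke strong duality with attainment for $(P_i)$, which only gives you a nonempty, attained dual because $(P_i)$ is feasible with a finite optimum; that version of duality is slightly stronger than the symmetric statement in Theorem \ref{th:duality} (which presupposes both problems feasible), but it is of course standard and available in the same reference. Similarly, you quietly use that the primal optimum of \eqref{eq:lpduality} is attained to seed $(P_i)$ with a feasible point; this follows from non-emptiness of both sets via weak duality and LP attainment, and is worth a sentence.
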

\end{appendix}

\begin{funding}
	RA is supported by ISF grant 1035/20.
\end{funding}
\bibliographystyle{imsart-number}

\bibliography{refs}

\end{document}